\newcommand\tnc{\theta_{\Sigma}}
\newcommand\EE{\mathbb E} 
\newcommand\dom{\text{dom }} 
\newcommand\NN{\mathbb{N}} 
\newcommand\RR{\mathbb{R}} 
\newcommand{\BEAS}{\begin{eqnarray*}}
\newcommand{\EEAS}{\end{eqnarray*}}
\newcommand{\BEA}{\begin{eqnarray}}
\newcommand{\EEA}{\end{eqnarray}}
\newcommand{\BEQ}{\begin{equation}}
\newcommand{\EEQ}{\end{equation}}
\newcommand{\BIT}{\begin{itemize}}
\newcommand{\EIT}{\end{itemize}}
\newcommand{\BNUM}{\begin{enumerate}}
\newcommand{\ENUM}{\end{enumerate}}
\newcommand{\BA}{\begin{array}}
\newcommand{\EA}{\end{array}}
\newcommand{\diag}{\mathop{\rm diag}}
\newcommand{\Diag}{\mathop{\rm Diag}}
\newcommand{\argmin}{\mathop{\rm argmin}}
\newcommand{\argmax}{\mathop{\rm argmax}}
\newcommand{\Tr}{\mathop{ \rm tr}}
\newcommand{\tr}{\mathop{ \rm tr}}
\newcommand{\sign}{\mathop{ \rm sign}}
\newcommand{\idm}{I}
\newcommand{\mysec}[1]{Section~\ref{sec:#1}}
\newcommand{\myapp}[1]{Appendix~\ref{app:#1}}
\newcommand{\eq}[1]{Eq.~(\ref{eq:#1})}
\newcommand{\myfig}[1]{Figure~\ref{fig:#1}}
\newtheorem{lemma}{Lemma}					        
\newtheorem{theorem}{Theorem}					        
\newtheorem{corollary}{Corollary}					        
\newtheorem{proposition}{Proposition}
\newtheorem{definition}{Definition}	
\newcommand{\LeftEqNo}{\let\veqno\@@leqno}
\title{Stochastic Composite Least-Squares Regression\\ with convergence rate $O(1/n)$}
\author{
 Nicolas Flammarion and Francis Bach\\
INRIA - Sierra project-team\\
D\'epartement d'Informatique de l'Ecole Normale Sup\'erieure \\
Paris, France \\
 \texttt{nicolas.flammarion@ens.fr}, \texttt{francis.bach@ens.fr} 
}
\begin{document}

\maketitle

\begin{abstract}
We consider the minimization of composite objective functions composed of the expectation of quadratic functions and an arbitrary convex function. We study the stochastic dual averaging algorithm with a constant step-size, showing that it leads to a convergence rate of $O(1/n)$ without strong convexity assumptions. This thus extends earlier results on least-squares regression with the Euclidean geometry to (a) all convex regularizers and constraints, and (b) all geometries represented by a Bregman divergence. This is achieved by a new proof technique that relates stochastic and deterministic recursions.
\end{abstract}

\section{Introduction}

Many learning problems may be cast as the optimization of an objective function defined as an expectation of random functions, and which can be accessed only through samples. In this paper, we consider \emph{composite} problems of the form 
\BEQ
\min_{\theta \in {\RR^d} } \EE_z \ell(z,\theta)+g(\theta),
\EEQ
where   for any $z$, $\ell(z,\cdot)$ is a convex quadratic function (plus some linear terms) and $g$ is any extended-value convex function.

In a machine learning context, $\ell(z,\theta)$ is the loss occurred for the observation $z$ and the predictor parameterized by $\theta$, $f(\theta) = \EE_z\ell(z,\theta)$ is its generalization error, while the function $g$ represents some additional regularization or constraints on the predictor. Thus in this paper we consider composite least-squares regression problems, noting that solving such problems effectively leads to efficient algorithms for all smooth losses by using an online Newton algorithm~\citep{bm1}, with the same running-time complexity of $O(d)$ per iteration for linear predictions.

When $g=0$,   averaged stochastic gradient descent with a constant step-size achieves the optimal  convergence rate of $O(1/n)$ after $n$ observations, even in ill-conditioned settings without strong convexity~\citep{DieFlaBac16,JaiKak16}, with precise non-asymptotic results that depend on the statistical noise variance  $\sigma^2$ of the least-squares problem, as $  {\sigma^2 d}/{n}$, and on the squared Euclidean distance between the initial predictor $\theta_0$ and the optimal predictor $\theta_\ast$, as $ {\| \theta_0 - \theta_\ast\|_2^2}/{n}$.

In this paper, we extend this $O(1/n)$ convergence result in two different ways: 
\BIT

\vspace*{-.2cm}

\item \textbf{Composite problems}: we provide a new algorithm that deals with composite problems where~$g$ is (essentially) any extended-value convex function, such as the indicator function of a convex set for constrained optimization, or a norm or squared norm for additional regularization. This situation is common in many applications in machine learning and signal processing~\citep[see, e.g.,][and references therein]{rish2014sparse}. Because we consider large steps-sizes (that allow robustness to ill-conditioning), the new algorithm is \emph{not} simply a proximal extension; for example, in the constrained case, averaged projected stochastic gradient descent with a constant step-size is not convergent, even for quadratic functions.

\vspace*{-.2cm}

\item \textbf{Beyond Euclidean geometry}: Following mirror descent~\citep{Nem79}, our new algorithm can take into account a geometry obtained with a Bregman divergence $D_h$ associated with a convex function $h$, which can typically be  the squared Euclidean norm (leading to regular stochastic gradient descent in the non-composite case), the entropy function, or  the squared $\ell_p$-norm. This will allow convergence rates proportional to $  D_h(\theta_\ast,\theta_0)/n$, which may be significantly smaller than $ {\| \theta_0 - \theta_\ast\|^2}/{n}$ in many situations.
\EIT

In order to obtain these two extensions, we consider the stochastic dual averaging algorithm of~\citet{Nes07} and \citet{xia10} which we present in \mysec{da}, and   study under the particular set-up of \emph{constant step-size with averaging}, showing in \mysec{stogen} that it also achieves a convergence rate of $O(1/n)$ even without strong-convexity. This is achieved by a new proof technique that relates stochastic and deterministic recursions.

Given that known lower-bounds for this class of problems are proportional to $1/\sqrt{n}$ for function values, we established our $O(1/n)$ results with a different criterion, namely the Mahalanobis distance associated with the Hessian of the least-squares problem. In our simulations in \mysec{simulations}, the two criteria behave similarly. Finally, in \mysec{mdda}, we shed additional insights of the relationships between mirror descent and dual averaging, 
 in particular in terms of  continuous-time interpretations.
 
\section{Dual averaging algorithm}

In this section, we introduce  dual averaging  as well as related frameworks, together with new results in the deterministic case.

\label{sec:da}
\subsection{Assumptions}
We consider the Euclidean space ${\RR^d}$ of dimension $d$ endowed with the natural inner product $\langle \cdot,\cdot \rangle$ and an arbitrary norm $\Vert \cdot\Vert$ (which may not be the Euclidean norm). We denote by $\Vert \cdot \Vert_*$ its dual norm and for any  symmetric positive-definite matrix~$A$, by  $\Vert \cdot \Vert_{A}=\sqrt {\langle \cdot, A\cdot \rangle}$ the Mahalanobis norm. For a vector $\theta\in{\RR^d}$, we denote by $\theta(i)$ its $i$-th coordinate   and by $\Vert \theta\Vert_p=(\sum_{i=1}^d \vert\theta(i)\vert^p)^{1/p}$ its $\ell_p$-norm. We also denote the convex conjugate of a function $f$  by $f^*(\eta)=\sup_{\theta\in{\RR^d}}  \langle \eta,\theta\rangle-f(\theta) $. 
We remind that a function $f$ is $L$-smooth with respect to a norm $\Vert\cdot\Vert$ if for all $(\alpha,\beta)\in{\RR^d} \times \RR^d$, $\Vert \nabla f(\alpha)-\nabla f(\beta)\Vert_*\leq L \Vert \alpha-\beta\Vert$ and is  $\mu$-strongly convex if  for all $(\alpha,\beta)\in{\RR^d} \times \RR^d$ and $g\in\partial f (\beta)$, $f(\alpha)\geq f(\beta)+\langle g,\alpha-\beta\rangle +\frac{\mu}{2}\Vert \alpha-\beta\Vert^2$ \citep[see, e.g.,][]{ShaSin06}.  

We consider problems of the form:
 \begin{equation}\label{eq:comp}
 \min_{\theta\in\mathcal{\mathcal{X}}} \psi(\theta)= f(\theta)+g(\theta), 
\end{equation}
where  $\mathcal{X}\subset {\RR^d}$ is a closed convex set with non empty interior. Throughout this paper, we make the following general assumptions:
\begin{description}

\vspace*{-.1444cm}

 \item[(A1)] $f:{\RR^d} \to \RR\cup\{+\infty\}$ is a proper lower semicontinuous convex function and is differentiable on $ \mathring{\mathcal X}$ (the interior of $\mathcal X$).

\vspace*{-.1444cm}

 \item[(A2)] $g:{\RR^d} \to \RR\cup\{+\infty\}$ is a proper lower semicontinuous convex function.

\vspace*{-.1444cm}

 \item[(A3)]  $h:{\RR^d} \to \RR\cup\{+\infty\}$ with $ \overline{\dom h}\cap \overline{\dom g}=\mathcal{X}$, $\mathring \dom h \cap  \dom g \neq \emptyset$.
Moreover $h$  is a Legendre function \citep[][chap. 26]{Roc70}:
\begin{itemize}

\vspace*{-.1444cm}

 \item $h$ is a proper lower semicontinuous  strictly convex  function, differentiable on $\mathring  \dom h$.
 \item The gradient of $h$ is diverging on the boundary of $\dom h$ (i.e., $ {\lim_{{n\to +\infty}}}\Vert \nabla h (\theta_n)\Vert =\infty$ for any sequence $(\theta_n)$ converging to a boundary point of $\dom h$). Note that $\nabla h$ is then a bijection from $\mathring{\dom} h $ to $\mathring{\dom} h^*$ whose inverse is the gradient of the conjugate $\nabla h^*$.
\end{itemize}

\vspace*{-.1844cm}

\item [(A4)]
 The function $\psi=f+g$ attains its minimum over $\mathcal{X}$ at a certain  $\theta_*\in {\RR^d}$ (which may not be unique).
\end{description}
Note that we adopt the same framework as \citet{BauBolTeb15} with the difference that the convex constraint $\mathcal{C}$ can be handled with more flexibility: either by considering a Legendre function~$h$ whose domain is $\mathcal{C}$ or by considering the hard constraint  $g(\theta)=\mathbbm{1}_C(\theta)$ (equal to $0$ if $\theta \in \mathcal{C}$ and $+\infty$ otherwise).

\subsection{Dual averaging algorithm}

In this section we present the dual averaging algorithm  (referred to from now on as ``DA'')  for solving composite problems of the form of \eq{comp}. It starts from $\theta_0\in \mathring \dom h $ and $\eta_0=\nabla h(\theta_0)$ and   iterates  for $n\geq 1$ the recursion
\BEA\label{eq:da}
\eta_n&=& \eta_{n-1}-\gamma \nabla f(\theta_{n-1}) \nonumber\\
\theta_n&=& \nabla h_n^*(\eta_n),
\EEA
with $h_n=h+n\gamma g$ and $\gamma \in (0,\infty)$ (commonly referred to as the step-size in optimization or the learning rate in machine learning). We note that equivalently $\theta_n\in\argmax_{\theta\in{\RR^d}}\{ \langle\eta_n,\theta\rangle-h_n(\theta)\}$. When $h= \frac{1}{2} \| \cdot \|_2^2$ and $g=0$, we recover gradient descent.

Two iterates $(\eta_n,\theta_n)$ are updated in DA. The dual iterate $\eta_n$ is simply proportional to the sum of the gradients evaluated in the primal iterates $(\theta_n)$. The update of the primal iterate $\theta_n$ is more complex and raises two different issues: its existence and its tractability. We discuss the first point in  \myapp{dadefined} and assume, as of now, that the method is generally well defined  in practice. The tractability of $\theta_n$ is essential and the algorithm is only used in practice if the functions~$h$ and $g$ are simple in the sense that the gradient $\nabla h_n^*$ may be computed effectively. This is the case if there exists a closed form expression. Usual examples are given in \myapp{examples}.

 \paragraph{Euclidean case and proximal operators.}
In the Euclidean case, \eq{da} may be written in term of the proximal operator defined by \citet{Mor62} as  $\text{Prox}_g(\eta)=\argmin_{\theta\in \mathcal{X}}\{\frac{1}{2}\Vert \theta-\eta\Vert_2^2+g(\theta)\}$:
\[
 \theta_n
 =
 \argmin_{\theta\in \mathcal{X}}\Big\{ \langle -\eta_n,\theta\rangle +n\gamma g(\theta) +\frac{1}{2}\Vert \theta\Vert_2^2\Big\}
 =
 \argmin_{\theta\in \mathcal{X}}\Big\{\frac{1}{2}\Vert \theta-\eta_n\Vert_2^2+n\gamma g(\theta) \Big\}= \text{Prox}_{\gamma n g}(\eta_n).
\]
DA is in this sense related to proximal gradient methods, also called forward-backward splitting methods \citep[see, e.g., ][]{bt,Wri09,ComPes11}. These methods are tailored  to composite optimization problems: at each iteration   $f$ is linearized around the current iterate $\theta_n$ and they consider the following update
\[
 \theta_{n+1}=\argmin_{\theta\in\mathcal{X}}\Big\{\langle \gamma \nabla  f(\theta_n),\theta\rangle +\gamma g(\theta)+\frac{1}{2}\Vert \theta-\theta_n\Vert_2^2\Big\}
 =
 \text{Prox}_{\gamma g}(\theta_n-\gamma \nabla f(\theta_n)).
\]
Note the difference with DA which considers a dual iterate and a proximal operator for the function $n \gamma g$ instead of $\gamma g$ (see additional insights in \mysec{mdda}).

\paragraph{From non-smooth to smooth optimization.}
DA was  initially introduced by \citet{Nes07} to optimize a non-smooth function  $f$ with possibly convex constraints ($g=0$ or $g=\mathbbm{1}_{\mathcal{C}}$). It was extended to the general stochastic composite case by \citet{xia10}  who defined the iteration as
\[
\theta_{n}=\argmin_{\theta\in \mathcal{X}}\Big\{  \frac{1}{n}\sum_{i=0}^{n-1}\langle z_{i},\theta \rangle + g(\theta) +  \frac{\beta_{n}}{n} h(\theta) \Big\},
\]
where $z_{i}$ is an unbiased estimate\footnote{Their results remain true in the more general setting of online learning.} of a subgradient in $\partial f(\theta_{i})$ and  $(\beta_n)_{n\geq1}$ a nonnegative and nondecreasing sequence of real numbers. This formulation is equivalent to \eq{da} for constant sequences $\beta_{n}=1/\gamma$. \citet{xia10} proved convergence rates of order $O(1/\sqrt{n})$ for convex problems with decreasing step-size $C/\sqrt{n}$ and $O(1/(\mu{n}))$ for problems with $\mu$-strongly convex regularization with  constant step-size $1/\mu$.   DA was also studied with decreasing step-sizes  in the distributed case by \citet{DucAga12,Dek12,COl16} and combined with the alternating direction method of multipliers (ADMM) by \citet{Suz13}. It was further shown to be very efficient in manifold identification by \citet{LeeWri12} and \citet{DucRua16}.

\paragraph{Relationship with mirror descent.}

The DA method should be associated with its cousin  mirror descent algorithm (referred to from now on as ``MD''), introduced by \citet{Nem79} for the constrained case and written under its modern proximal form by \citet{Teb03}
\[
\theta_{n}=\argmin_{\theta\in \mathcal{X}}\big\{  \gamma \langle \nabla f (\theta_{n-1}),\theta \rangle  + D_{h}(\theta,\theta_{n-1})\big\},
\]
where we denote by $D_h(\alpha,\beta)=h(\alpha)-h(\beta)-\langle\nabla h(\beta),\alpha-\beta\rangle$ the Bregman divergence associated with $h$. Moreover it was later extended to the general composite case by \citet{Duc10}  
\BEQ\label{eq:mdprox}
\theta_{n}=\argmin_{\theta\in \mathcal{X}}\big\{  \gamma \langle \nabla f (\theta_{n-1}),\theta \rangle + \gamma g(\theta) + D_{h}(\theta,\theta_{n-1})\big\}.
\EEQ
DA was initially motivated by  \citet{Nes07}  to avoid  new gradients to be taken into account with less weight than previous ones. However, as an extension of the Euclidean case, DA essentially differs from MD  on the way the regularization component is dealt with. See more comparisons in \mysec{mdda}.

\paragraph{Relationship with online learning.}
DA was traditionally studied under the online learning setting \citep{Zin03} of regret minimization and is   related to the ``follow the leader''  approach \citep[see, e.g.,][]{KaiVam05} as noted by \citet{McM11}. More generally, the DA method may be cast in the primal-dual algorithmic framework of \citet{ShaSin06} and \citet{sha09}.

\subsection{Deterministic convergence result for dual averaging}

In this section we present the convergence properties of the DA method for optimizing deterministic composite problems of the form in \eq{comp}, for any smooth function $f$ (see proof in \myapp{deterministic}).
\begin{proposition}\label{prop:phifunction}
 Assume \textbf{(A1-4)}. For any step-size $\gamma$ such that $h-\gamma f$ is convex on $\mathring{ \mathcal{X}}$ we have for all $\theta\in\mathcal{X}$ 
 \[
\psi(\theta_n)-\psi(\theta)\leq\frac{D_h(\theta,\theta_0) }{\gamma(n+1)}.
 \]
Moreover assume $g=0$, and there exists $\mu\in\RR$ such that $f-\mu h$ is also convex on $\mathring{ \mathcal{X}}$ then we have for all $\theta\in\mathcal{X}$ 
 \[
f(\theta_n)-f(\theta)\leq (1-\gamma\mu)^n \frac{D_h(\theta,\theta_0) }{\gamma}.
 \]
\end{proposition}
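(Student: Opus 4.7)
For the first statement, I would introduce the linearized-model potential
\[
\Phi_n(\theta) = \gamma\sum_{i=0}^{n-1}\bigl[f(\theta_i) + \langle \nabla f(\theta_i), \theta - \theta_i\rangle\bigr] + n\gamma\, g(\theta) + D_h(\theta,\theta_0),
\]
observe that the DA recursion in \eq{da} is exactly the first-order optimality condition making $\theta_n = \argmin_\theta \Phi_n(\theta)$, and write $\Phi_n^\circ := \Phi_n(\theta_n)$ (with $\Phi_0^\circ = 0$). The whole argument is then a two-sided sandwich on $\Phi_n^\circ$.

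\textbf{Sandwich.} For the upper side, the subgradient inequality $f(\theta_i)+\langle \nabla f(\theta_i), \theta - \theta_i\rangle \leq f(\theta)$ applied termwise yields $\Phi_n^\circ \leq n\gamma\,\psi(\theta) + D_h(\theta,\theta_0)$ for any $\theta\in\mathcal{X}$. For the lower side, since $\Phi_n - h$ is convex and $\theta_n$ minimizes $\Phi_n$, one has the three-point inequality $\Phi_n(\theta) \geq \Phi_n^\circ + D_h(\theta,\theta_n)$ for all $\theta$. Specializing at $\theta = \theta_{n+1}$, inserting into the one-step identity $\Phi_{n+1}(\theta_{n+1}) = \Phi_n(\theta_{n+1}) + \gamma[f(\theta_n)+\langle \nabla f(\theta_n),\theta_{n+1}-\theta_n\rangle + g(\theta_{n+1})]$, and invoking the hypothesis ``$h-\gamma f$ convex'' through $D_h(\theta_{n+1},\theta_n) \geq \gamma D_f(\theta_{n+1},\theta_n)$ makes the $\nabla f$-cross-terms cancel and yields the master recursion
\[
\Phi_{n+1}^\circ - \Phi_n^\circ \geq \gamma\,\psi(\theta_{n+1}).
\]
Telescoping and combining with the upper bound gives $\sum_{i=1}^n\psi(\theta_i) - n\,\psi(\theta) \leq D_h(\theta,\theta_0)/\gamma$, which by convexity of $\psi$ applied to the Ces\`aro average of the iterates delivers the first assertion (the ``$n+1$'' in the denominator being a matter of indexing convention for what $\theta_n$ denotes in the statement).

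\textbf{Strongly convex case.} With $g=0$, $h_n = h$, so the DA recursion collapses to plain mirror descent $\nabla h(\theta_{n+1}) = \nabla h(\theta_n) - \gamma \nabla f(\theta_n)$. Taking the inner product with $\theta-\theta_{n+1}$ and applying the three-point identity $\langle\nabla h(\alpha)-\nabla h(\beta),c-\alpha\rangle = D_h(c,\beta)-D_h(c,\alpha)-D_h(\alpha,\beta)$ expresses $\gamma\langle\nabla f(\theta_n),\theta_{n+1}-\theta\rangle$ as $D_h(\theta,\theta_n) - D_h(\theta,\theta_{n+1}) - D_h(\theta_{n+1},\theta_n)$. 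Splitting the inner product via the Bregman-smoothness inequality $\gamma[f(\theta_{n+1})-f(\theta_n)-\langle\nabla f(\theta_n),\theta_{n+1}-\theta_n\rangle] \leq D_h(\theta_{n+1},\theta_n)$ (from $h-\gamma f$ convex) and the Bregman-strong-convexity inequality $\langle\nabla f(\theta_n),\theta_n-\theta\rangle \geq f(\theta_n)-f(\theta)+\mu D_h(\theta,\theta_n)$ (from $f-\mu h$ convex) yields the one-step inequality
\[
\gamma[f(\theta_{n+1})-f(\theta)] + D_h(\theta,\theta_{n+1}) \leq (1-\gamma\mu)\, D_h(\theta,\theta_n).
\]
Evaluated at $\theta = \theta_*$ and dropping the non-negative $f$-gap, this is a strict contraction, so $D_h(\theta_*,\theta_n) \leq (1-\gamma\mu)^n D_h(\theta_*,\theta_0)$; re-injecting this back into the one-step inequality produces the announced exponential rate, which one then extends to arbitrary $\theta\in\mathcal{X}$ by feeding the contraction of $D_h(\theta,\theta_k)$ into $\gamma[f(\theta_{n+1})-f(\theta)] \leq (1-\gamma\mu)D_h(\theta,\theta_n)$.

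\textbf{Main obstacle.} The only non-routine step is the collapse of the $\nabla f$-cross-terms in the master recursion: the hypothesis ``$h-\gamma f$ convex'' is used precisely because the Taylor remainder $\gamma D_f(\theta_{n+1},\theta_n)$ of $\gamma f$ around $\theta_n$ must be exactly absorbed by $D_h(\theta_{n+1},\theta_n)$; without this, the $\gamma\langle\nabla f(\theta_n),\theta_{n+1}-\theta_n\rangle$ term does not vanish and the telescoping breaks. Everything else is careful but standard three-point-identity bookkeeping.
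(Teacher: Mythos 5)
Your potential $\Phi_n$ is the paper's object in disguise: a direct computation shows that $\Phi_n(\theta)-\Phi_n^\circ$ equals the generalized Bregman divergence $\tilde D_n(\theta,\eta_n)$ that the paper uses as a Lyapunov function, and your master recursion $\Phi_{n+1}^\circ-\Phi_n^\circ\geq\gamma\,\psi(\theta_{n+1})$, obtained by absorbing $\gamma D_f(\theta_{n+1},\theta_n)$ into $D_h(\theta_{n+1},\theta_n)$ via ``$h-\gamma f$ convex'', is exactly the paper's one-step inequality $\tilde D_{n+1}(\theta,\eta_{n+1})-\tilde D_{n}(\theta,\eta_{n})\leq\gamma(\psi(\theta)-\psi(\theta_{n+1}))$ read off at the two ends of your sandwich. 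The strongly convex half is likewise the paper's argument (same three-point identity, same two Bregman inequalities). So the route is essentially the same, only packaged in follow-the-regularized-leader style rather than as a Bregman recursion.

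There is, however, one genuine gap, and it occurs in both halves: you never establish that the iterates are monotone in objective value, and without that your final steps do not prove the stated claims. In the first part, telescoping gives $\sum_{i=1}^n[\psi(\theta_i)-\psi(\theta)]\leq D_h(\theta,\theta_0)/\gamma$, and ``convexity of $\psi$ applied to the Ces\`aro average'' bounds $\psi$ at the \emph{averaged} point, whereas the proposition bounds $\psi(\theta_n)$ at the last DA iterate, a different point; passing from the sum to the last iterate requires $\psi(\theta_{k+1})\leq\psi(\theta_k)$. This is available inside your own framework (compare $\Phi_{n+1}^\circ\leq\Phi_{n+1}(\theta_n)=\Phi_n^\circ+\gamma\psi(\theta_n)$ with the master recursion); the paper gets it by taking $\theta=\theta_{n-1}$ in its one-step inequality. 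In the second part, the contraction $D_h(\theta,\theta_{k+1})\leq(1-\gamma\mu)D_h(\theta,\theta_k)$ for \emph{arbitrary} $\theta$ only follows from your one-step inequality after dropping $\gamma[f(\theta_{k+1})-f(\theta)]$, which is legitimate only when that term is nonnegative; for general $\theta$ you again need the decrease of the $f(\theta_k)$ (together with the trivial case $f(\theta)>f(\theta_n)$), or the paper's alternative of keeping the $f$-terms, replacing each $f(\theta_k)$ by $f(\theta_n)$ via monotonicity, and summing the geometric series. Once monotonicity is inserted, both halves of your argument close correctly.
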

We can make the following remarks:

\vspace*{-.1444cm}

\begin{itemize}
\item
We adapt the proof of \citet{Teb03} to the composite case and the DA method by including the regularization component $g$ in the Bregman divergence. If $g$ was differentiable we would simply use $D_{h_n} = D_{h + n \gamma g}$  and prove the following recursion:
\begin{multline*}\textstyle
  D_{h_n}(\theta_*,\theta_n)- D_{h_{n-1}}(\theta_*,\theta_{n-1})=- D_{h_{n-1}}(\theta_n,\theta_{n-1}) +\gamma \langle \nabla f(\theta_{n-1}), \theta_{n-1}-\theta_n \rangle
\\
- \gamma (g(\theta_n)-g(\theta)) -\gamma \langle \nabla f(\theta_{n-1}), \theta_{n-1}-\theta_* \rangle.
\end{multline*}
Since $g$ is not differentiable, we extend instead the notion of Bregman divergence to the non-smooth case in \myapp{nonsmoothbreg} and show the proof works in the same way.

\vspace*{-.1444cm}

\item 
\textbf{Related work}: DA was first analyzed for smooth functions in the non-composite case where $g=0$, by \citet{Dek12} in the stochastic setting and  by \citet{LuFreNes16} in the deterministic setting. A result on MD with analogue assumptions is presented by \citet{BauBolTeb15} but depends on a symmetry measure of the Bregman divergence $D_h$ which reflects how  different are $D_h(\alpha,\beta)$ and $D_h(\beta,\alpha)$, and the bound is not as simple. The technique to extend the Bregman divergence to analyze the regularization component has its roots in the time-varying potential method in online learning \citep[Chapter 11.6]{CesLug06} and the ``follow the regularized leader'' approach \citep{AbeHazRak08}.

\vspace*{-.1444cm}

\item
This convergence rate is suboptimal for the class of addressed problems. Indeed accelerated gradient methods achieve the convergence rate of $O(L/n^2)$ in the composite setting \citep{Nes13}, such a rate being optimal for optimizing smooth functions  among first-order techniques that can access only sequences of gradients  \citep{nest2004}.

\vspace*{-.1444cm}

\item
Classical results on the convergence of optimization algorithms in non-Euclidean geometries assume on one hand that the function $h$ is strongly convex and on the other hand the function~$f$ is Lipschitz or smooth. Following \citet{BauBolTeb15,LuFreNes16}, we consider a different assumption which combines the smoothness  of $f$ and the strong convexity of $h$  on the single condition $ h-\gamma f$ convex.
For the Euclidean geometry where $h(\theta) = \frac{1}{2} \| \theta\|_2^2$, this condition is obviously equivalent to the smoothness of the function $f$ with regards to the $\ell_2$-norm. Moreover, under arbitrary norm $\Vert \cdot \Vert$, this is also equivalent to assuming $h$ $\mu$-strongly convex and $f$ $L$-smooth (with respect to this norm).
However it is much more general and may hold even when $f$ is non-smooth, which precisely justifies the introduction of this condition  \citep[see examples described by][]{BauBolTeb15,LuFreNes16}.  

\vspace*{-.1444cm}

\item  The bound adapts to the geometry of the function $h$ through the Bregman divergence between the starting point $\theta_0$ and the solution $\theta_*$ and the step-size $\gamma$ which is controlled by $h$. Therefore the choice of $h$ influences the constant in the bound. Examples are provided in \myapp{examples}.
\end{itemize}

\section{Stochastic convergence results for quadratic functions}\label{sec:stogen}

In this section, we consider  a symmetric positive semi-definite   matrix     $\Sigma\in\RR^{d\times d}$ and a convex quadratic function  $f$ defined as
\begin{equation}\tag{\textbf{A5}}
 f (\theta) =  \textstyle \frac{1}{2}\langle \theta, \Sigma \theta\rangle-\langle q,\theta\rangle, \quad \text{with $q\in \RR^d$ in the column space of $\Sigma$,} \LeftEqNo
\end{equation}
so that $f$ has a global minimizer $\theta_\Sigma \in \RR^d$. Without loss of generality\footnote{
By decomposing $\theta$ in $\theta=\theta_\parallel+\theta_\perp$ with $\theta_\perp\in\mathrm{Null}(\Sigma)$ and $\langle \theta_{\perp},\theta_{\parallel}\rangle=0$ and considering $\psi(\theta)=f(\theta_\parallel)+\tilde g(\theta_\parallel)$ where $\tilde g(\theta_\parallel)=\inf_{\theta_\perp\in\mathrm{Null}(\Sigma)}g(\theta_\perp+\theta_\parallel)$.}, $\Sigma$ is assumed invertible, though its eigenvalues could be arbitrarily small.
The global solution is known to be $\tnc=\Sigma^{-1}q$, but  the inverse of the Hessian is often too expensive to compute when $d$ is large. 
The function may be simply expressed as \mbox{$f(\theta_n)=\frac{1}{2}\langle \theta_n-\tnc,\Sigma(\theta_n-\tnc)\rangle+f(\tnc)$} and the excess of the cost function $\psi=f+g$ as
\begin{multline*}
\psi(\theta_n)-\psi(\theta_*)=\langle \theta_*-\tnc,\Sigma(\theta_n-\theta_*)\rangle +g(\theta_n)-g(\theta_*) \text{ (linear part) }\\
+\frac{1}{2}\langle \theta_n-\theta_*,\Sigma(\theta_n-\theta_*)\rangle \text{ (quadratic part) }.
\end{multline*}
The first-order condition of the optimization problem in \eq{comp} is $0\in \nabla f (\theta_*) + \partial g(\theta_*)$  and  by convexity of $g$ we have $g(\theta_n)-g(\theta_*)\geq \langle z,\theta_n-\theta_*\rangle$ for any $z\in \partial g(\theta_*)$. Therefore this implies that the linear part $g(\theta_n)-g(\theta_*) +\langle \nabla f(\theta_*),\theta_n-\theta_*)\rangle$ is non-negative and we have the bound
\BEQ\label{eq:normlowerfunction}
 \frac{1}{2}\Vert \theta_n-\theta_*\Vert_{\Sigma}^{2}\leq \psi(\theta_n)-\psi(\theta_*).
\EEQ
We derive, in this section, convergence results in terms of the distance $\Vert \theta_n-\theta_*\Vert_{\Sigma}$ which takes into account the  ill-conditioning of the matrix $\Sigma$ and is a lower bound in the excess of function values. Furthermore it directly implies classical results for strongly convex problems. 

In many practical situations, the gradient of $f$ is not available for the recursion in \eq{da}, and we have only access to an unbiased estimate  $\nabla f_{n+1}(\theta_n)$ of the gradient of $f$ at $\theta_n$. We consider in this case the stochastic dual averaging method (referred to from now on as ``SDA'') defined the same way as DA as
\BEA\label{eq:sda}
\eta_n&=& \eta_{n-1}-\gamma \nabla f_n(\theta_{n-1}) \nonumber\\
\theta_n&=& \nabla h_n^*(\eta_n),
\EEA
for  $\theta_0\in \mathring \dom h $ and $\eta_0=\nabla h(\theta_0)$. Here we consider the stochastic approximation framework \citep{kuku}. That is, we let $(\mathcal{F}_n)_{n\geq0}$ be an increasing family of $\sigma$-fields such that for each $\theta\in{\RR^d}$ and for all $n\geq 1$ the random variable $\nabla f_n(\theta)$ is square-integrable and $\mathcal{F}_n$-measurable with $\EE[\nabla f_n(\theta)\vert \mathcal{F}_{n-1}]=\nabla f (\theta)$. This includes (but also extends) the usual machine learning situation where $\nabla f_n$ is the gradient of the loss associated with the $n$-th independent observation. We will consider in the following two different gradient oracles.

\subsection{Additive noise}

We study here the convergence of the SDA recursion defined in \eq{sda} under  an additive noise model:
\begin{description}
 \item [\textbf{(A6)}] For all $n\geq1$, $\nabla f_n(\theta)=\nabla f(\theta)-\xi_n$, where the noise $(\xi_n)_{n\geq1}$ is a square-integrable martingale difference sequence  (i.e., $\EE [ \xi_n | \mathcal{F}_{n-1} ] = 0$) with bounded covariance $\EE[\xi_n\otimes\xi_n]\preccurlyeq C$.
\end{description}
 With this oracle and for the quadratic function $f$, SDA takes the form
\BEA\label{eq:daquaad}
 \eta_n &=&
 \eta_{n-1}-\gamma (\Sigma \theta_{n-1}-q) +\gamma\xi_n \nonumber \\
 \theta_n
 &=&
 \nabla h_n^*(\eta_n).
\EEA
We obtain the following convergence result on the average $\bar \theta_{n}=\frac{1}{n}\sum_{k=0}^{n-1}\theta_{k}$ which is an  extension of results from \citet{bm1} to non-Euclidean geometries and to composite settings  (see proof in \myapp{semisto}). 
\begin{proposition}\label{prop:theta}
 Assume \textbf{(A2-6)}. Consider the recursion in \eq{daquaad} for any constant step-size $\gamma$ such that $h-\gamma f$ is convex. Then 
 \begin{equation*}
  \frac{1}{2}{\EE \Vert  \bar \theta_n - \theta_*\Vert_{\Sigma}^2}
  \leq
  2\min\Bigg\{ {\frac{D_h(\theta_{*},\theta_{0})}{\gamma n}};
  \frac{\Vert \nabla h (\theta_0)-\nabla h (\theta_*)\Vert^2_{\Sigma^{-1}}}{(\gamma n)^2}
  \Bigg\}  
  +\frac{4}{n}\tr \Sigma^{-1}C. 
  \end{equation*}
\end{proposition}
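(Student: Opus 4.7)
My plan is to prove the two alternatives in the minimum by two complementary arguments, both of which ultimately add the same noise contribution $\tfrac{4}{n}\tr\Sigma^{-1}C$ via the bound $\EE\Vert\bar\xi_n\Vert_{\Sigma^{-1}}^2\le \tr(\Sigma^{-1}C)/n$, with $\bar\xi_n=\tfrac{1}{n}\sum_{k=1}^n\xi_k$, which follows from the martingale-difference covariance assumption \textbf{(A6)}.

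For the $D_h(\theta_*,\theta_0)/(\gamma n)$ term I would adapt the telescoping Bregman-divergence proof of Proposition~1 to the noisy gradient $\nabla f_n(\theta_{n-1})=\nabla f(\theta_{n-1})-\xi_n$. After substituting $\nabla f_n$ for $\nabla f$ in the identity hinted at just after Proposition~1 and combining the two resulting $\xi_n$-terms, one obtains a per-step inequality
\[
D_{h_n}(\theta_*,\theta_n) - D_{h_{n-1}}(\theta_*,\theta_{n-1}) \le -\gamma[\psi(\theta_n)-\psi(\theta_*)] + \gamma\langle\xi_n,\theta_n-\theta_*\rangle.
\]
Summing in $k$, taking expectations (the $\mathcal{F}_{n-1}$-measurable piece $\langle\xi_n,\theta_{n-1}-\theta_*\rangle$ vanishes by the martingale property), absorbing the residual $\EE\langle\xi_n,\theta_n-\theta_{n-1}\rangle$ using the Lipschitz-type behaviour of $\nabla h_n^*$ together with $h-\gamma f$ convex and the quadratic structure of $f$, and applying Jensen's inequality to $\psi$ on $\bar\theta_n$ produce $\EE[\psi(\bar\theta_n)-\psi(\theta_*)] \le D_h(\theta_*,\theta_0)/(\gamma n) + O(\tr\Sigma^{-1}C/n)$. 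The lower bound \eq{normlowerfunction}, namely $\tfrac12\Vert\bar\theta_n-\theta_*\Vert_\Sigma^2\le \psi(\bar\theta_n)-\psi(\theta_*)$, then converts this into the first half of the claimed bound.

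For the $\Vert\nabla h(\theta_0)-\nabla h(\theta_*)\Vert_{\Sigma^{-1}}^2/(\gamma n)^2$ term I would exploit the linearity of $\nabla f$. Summing the dual recursion, using the optimality condition $\Sigma\theta_*-q=-v_*$ for some $v_*\in\partial g(\theta_*)$, and defining $\alpha_n:=\eta_n-\nabla h(\theta_*)-n\gamma v_*$ (so $\alpha_0=\nabla h(\theta_0)-\nabla h(\theta_*)$), I obtain the identity $\Sigma(\bar\theta_n-\theta_*) = (\alpha_0-\alpha_n)/(\gamma n)+\bar\xi_n$, which via the isometry $\Vert\Sigma x\Vert_{\Sigma^{-1}}=\Vert x\Vert_\Sigma$ reduces the task to bounding $\EE\Vert\alpha_n\Vert_{\Sigma^{-1}}^2$. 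The recursion $\alpha_n=\alpha_{n-1}-\gamma\Sigma(\theta_{n-1}-\theta_*)+\gamma\xi_n$, the decomposition $\alpha_{n-1}=[\nabla h(\theta_{n-1})-\nabla h(\theta_*)]+(n-1)\gamma[u_{n-1}-v_*]$ (with $u_{n-1}\in\partial g(\theta_{n-1})$ coming from the DA first-order condition $\eta_{n-1}\in\partial h_{n-1}(\theta_{n-1})$), monotonicity of $\nabla h$ and of $\partial g$, and $h-\gamma f$ convex (giving $D_h(\theta_{n-1},\theta_*)\ge\tfrac{\gamma}{2}\Vert\theta_{n-1}-\theta_*\Vert_\Sigma^2$) together yield the key inequality $\gamma^2\Vert\theta_{n-1}-\theta_*\Vert_\Sigma^2\le 2\gamma\langle\alpha_{n-1},\theta_{n-1}-\theta_*\rangle$. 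Expanding $\Vert\alpha_n\Vert_{\Sigma^{-1}}^2$, using this cancellation after conditional expectation (the linear-in-$\xi_n$ cross-terms vanish), and iterating give $\EE\Vert\alpha_n\Vert_{\Sigma^{-1}}^2\le\Vert\alpha_0\Vert_{\Sigma^{-1}}^2+n\gamma^2\tr\Sigma^{-1}C$; plugging back through $\Vert a+b\Vert^2\le 2\Vert a\Vert^2+2\Vert b\Vert^2$ twice produces the second bound.

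The hard part will be the noise bookkeeping in the first approach: because $\theta_n=\nabla h_n^*(\eta_n)$ depends non-linearly on $\xi_n$, one cannot perform the explicit bias--variance decomposition of the iterates that underlies the Euclidean quadratic analysis of \citet{bm1}, and the cross-term between $\xi_k$ and $\theta_k$ has to be dispatched step by step by exploiting the quadratic structure of $f$ together with the Legendre assumptions on $h$. The second approach side-steps this difficulty entirely because the dual recursion is affine in the noise, which is precisely why it yields a complementary (and in non-Euclidean geometries often tighter) bound stated in terms of the $\Sigma^{-1}$-weighted dual distance rather than the Bregman divergence.
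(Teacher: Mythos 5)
Your second argument is sound and is essentially the paper's own proof of the $\Vert\nabla h(\theta_0)-\nabla h(\theta_*)\Vert_{\Sigma^{-1}}^2/(\gamma n)^2$ branch: your $\alpha_n=\eta_n-\nabla h(\theta_*)-n\gamma v_*$ is exactly the difference between the stochastic dual iterate and the stationary DA sequence $\eta_n^*$ started from $\theta_*$, your key inequality $\gamma^2\Vert\theta_{n-1}-\theta_*\Vert_\Sigma^2\le 2\gamma\langle\alpha_{n-1},\theta_{n-1}-\theta_*\rangle$ is the cancellation established in Lemma~\ref{lemma:etasemisto} (which in fact only needs $2h-\gamma f$ convex), and the telescoping identity $\Sigma(\bar\theta_n-\theta_*)=(\alpha_0-\alpha_n)/(\gamma n)+\bar\xi_n$ is the one used in Lemma~\ref{lemma:alphabeta}. (Group the initial term with the martingale sum before applying $\Vert a+b\Vert^2\le2\Vert a\Vert^2+2\Vert b\Vert^2$, otherwise you lose a factor of $2$ on the bias constant.)

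The first argument has a genuine gap. After conditioning in your per-step inequality, the surviving noise term is $\gamma\,\EE\langle\xi_k,\theta_k-\theta_{k-1}\rangle$, and $\theta_k$ is correlated with $\xi_k$ through $\eta_k$. The best the Lipschitz behaviour of $\nabla h_k^*$ can give is $\gamma\,\EE\langle\xi_k,\theta_k-\theta_{k-1}\rangle=O(\gamma^2\EE\Vert\xi_k\Vert_*^2)$ per step; summing over $k\le n$ and dividing by $\gamma n$ leaves a contribution of order $\gamma\,\EE\Vert\xi\Vert_*^2$, a constant that does not decay with $n$. This is not a bookkeeping issue: with a constant step-size the running average $\frac{1}{n}\sum_{k}\EE[\psi(\theta_k)-\psi(\theta_*)]$ genuinely does not tend to zero (this is precisely what Lemma~\ref{lemma:alphabetafunction} quantifies, with its $O(\gamma)$ residual), and Jensen's inequality goes the wrong way to convert that into a bound on $\psi(\bar\theta_n)$ that benefits from the averaging of the noise; the quadratic structure of $f$ cannot be injected at that point of the argument. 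The paper instead obtains the $D_h(\theta_*,\theta_0)/(\gamma n)$ branch by introducing the \emph{noiseless} DA iterates $(\omega_n,\phi_n)$ started from the same $\theta_0$, applying the deterministic Proposition~\ref{prop:phifunction} to $\bar\phi_n$, and applying your own second-half machinery to the difference $\eta_n-\omega_n$, which satisfies the same affine-in-the-noise recursion but with zero initial condition, so that Lemma~\ref{lemma:alphabeta} yields $\EE\Vert\bar\theta_n-\bar\phi_n\Vert_\Sigma^2\le 4\tr\Sigma^{-1}C/n$ with no initial-condition term. You already have every ingredient for this fix; you only need to run your second argument once more with the comparison sequence taken to be the noiseless recursion from $\theta_0$ rather than the stationary sequence at $\theta_*$.
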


We can make the following observations:

\vspace*{-.1444cm}

\begin{itemize}
 \item
 The proof in the Euclidean case  \citep{bm1} highly uses the equality $\theta_n-\tnc=(I-\gamma \Sigma)(\theta_{n-1}-\tnc)$ which is no longer available in the non-Euclidean or proximal cases. Instead  we adapt the classic proof of convergence of averaged SGD of \citet{pj} which rests upon  the expansion $\sum_{k=0}^{n}\nabla f_{k+1}(\theta_k)=\sum_{k=0}^{n}(\eta_{k}-\eta_{k+1})/\gamma=(\eta_{0}-\eta_{n+1})/\gamma$. The crux of the proof is then to consider the difference between the  iterations with and without noise, $\eta_n^{\text{sto}}-\eta_n^{\text{det}}$, which happens to satisfy a similar recursion as \eq{daquaad} but started from the solution $\theta_*$. The quadratic nature of $f$ is used twice: (a) to bound  $\Vert \eta_n^{\text{sto}}-\eta_n^{\text{det}}\Vert_{\Sigma^{-1}}\sim \sqrt{n}$, and (b) to expand $ \nabla f(\bar \theta_n)=\overline{\nabla f (\theta_n)}\sim \frac{\eta_n^{\text{sto}}-\eta_0}{\gamma n}+1/\sqrt{n}$. 

\vspace*{-.1444cm}

 \item 
 As for Proposition~\ref{prop:phifunction}, the constraint on the step-size $\gamma$ depends on the function $h$. Moreover the step-size $\gamma$ is constant, contrary to previous works on SDA \citep{xia10} which prove results for decreasing step-size $\gamma_n=C/\sqrt{n}$ for the convex case (and with a convergence rate of only $O(1/\sqrt{n})$).

\vspace*{-.1444cm}

 \item 
 The first term is the ``bias'' term. It only depends on the ``distance'' from the initial point $\theta_0$ to the solution $\theta_*$ as the minimum of two terms. The first one recovers the deterministic bound of Proposition~\ref{prop:phifunction}. The second one, specific to quadratic objectives, leads to an accelerated rate of $O(1/n^2)$ for some good starting points such that $\Vert \nabla h (\theta_0)-\nabla h (\theta_*)\Vert^2_{\Sigma^{-1}}<\infty$, thus extending the result from~\citet{FlaBac15}. 

 \vspace*{-.1444cm}

\item 
 The second term is the ``variance'' term which depends on the noise in the gradients. When the noise is structured (such as for least-squares regression), i.e, there exists $\sigma>0$ such that $C\preccurlyeq \sigma^2 \Sigma$, the variance term becomes $\frac{\sigma^2d}{n}$ which is optimal over all estimators in ${\RR^d}$  without regularization \citep{Tsyb}. However the regularization $g$ does not bring  statistical improvement as  possible, for instance, with $\ell_1$-regularization. We believe this is due to our proof technique. Indeed, in the case of linear constraints, \citet{DucRua16} recently showed that the primal iterates $(\theta_n)$ follow a central limit theorem (CLT), namely $\sqrt{n}\bar \theta_n$ is asymptotically normal with a covariance precisely restricted to the active constraints. This supports that SDA may leverage the regularization (the active constraints in their case) to get better statistical performance. We leave such non-asymptotic results to future work.
\end{itemize}

Assumption \textbf{(A6)} on the gradient noise is quite general, since the noise $(\xi_n)$ is allowed to be a martingale difference sequence (correct conditional expectation given the past, but not necessarily independence from the past). However it is not verified by the oracle corresponding to regular
SDA for least-squares regression, where the noise combines both an additive  and a multiplicative part, and its covariance is then no longer bounded in general (it will be for $g$ the indicator function of a bounded set).

\subsection{Least-squares regression}\label{sec:leastsquares}

We consider now the least-squares regression framework, i.e, risk minimization with the square loss. Following \citet{bm1}, we assume that:
\begin{description}

\vspace*{-.1444cm}

\item [\textbf{(A7)}] 
The observations $(x_n,y_n)\in{\RR^d}\times \RR$, $n\geq1$, are i.i.d.~distributed with finite variances $\EE \Vert x_n\Vert_2^2<\infty$ and $ \EE y_n^2<\infty$.

\vspace*{-.1444cm}

\item [\textbf{(A8)}] 
We consider the \emph{least-squares regression} problem which is the minimization of the quadratic function $f(\theta)=\frac{1}{2}\EE (\langle x_n,\theta\rangle -y_n)^2$. 

\vspace*{-.1444cm}

\item [\textbf{(A9)}] 
We denote by $\Sigma=\EE [x_n\otimes x_n]$ the population covariance matrix, which is the Hessian of $f$ at all points. Without loss of generality, we reduce ${\RR^d}$ to the minimal subspace where all $x_n$, $n\geq1$, lie almost surely. Therefore $\Sigma$ is invertible and all the eigenvalues of $\Sigma$ are strictly positive, even if they may be arbitrarily small.

\vspace*{-.1444cm}

\item  [\textbf{(A10)}]
We denote the residual by $\xi_n=(y_n-\langle\theta_*,x_n\rangle)x_n$. We have $\EE [\xi_n]=0$ but $\EE [\xi_n\vert x_n]\neq 0$ in general (unless the model is well-specified).
There exists $\sigma>0$ such that $\EE[\xi_n\otimes \xi_n] \preccurlyeq \sigma^2\Sigma$.

\vspace*{-.1444cm}

\item [\textbf{(A11)}]
There exists $\kappa>0$ such that for all $z\in{\RR^d}$, $\EE \langle z,x_n\rangle ^4\leq \kappa \langle z,\Sigma z\rangle$.

\vspace*{-.1444cm}

\item  [\textbf{(A12)}]
The function $g$ is lower bounded by some constant which is assumed by sake of simplicity to be $0$.

\vspace*{-.1444cm}

\item  [\textbf{(A13)}]
There exists $L>0$ such that $L h-\frac{1}{2}\Vert\cdot\Vert_\Sigma^2$ is convex.
\end{description}

Assumptions \textbf{(A7-9)} are standard for least-squares regression, while Assumption \textbf{(A10)}  defines a bounded statistical noise.
Assumption \textbf{(A11)} is commonly used in the analysis of least-mean-square algorithms \citep{Mac95} and says the projection of the covariates $x_n$ on any direction $z\in {\RR^d}$ have a bounded \emph{kurtosis}. It is true for Gaussian vectors with $\kappa = 3$. Assumption \textbf{(A13)} links up the geometry of the function $h$ and the objective function $f$; for example for $\ell_p$-geometries, $L$ is proportional to $\EE \| x\|_q^2$ where $1/p+1/q=1$ (see Corollary~\ref{cor:pdivergence} in \myapp{examples}). 

For the least-squares regression problem, the SDA algorithm defined in \eq{sda} takes the form:
\BEA\label{eq:daquad}
\eta_n
&=&
\eta_{n-1}-\gamma \big(\langle x_{n},\theta_{n-1}\rangle -y_n\big)x_{n}\nonumber \\
\theta_n
&=&
\nabla h_n^*(\eta_n). 
\EEA
This corresponds to a stochastic oracle of the form $\nabla f_n(\theta)= (\Sigma+\zeta_n)(\theta-\tnc)-\xi_n$ for $\theta\in{\RR^d}$, with $\zeta_n=x_n\otimes x_n-\Sigma$. This oracle combines an additive noise $\xi_n$ satisfying the previous Assumption~\textbf{(A6)} and a multiplicative noise $\zeta_n$ which is  harder to analyze.

We obtain a similar result compared to Proposition~\ref{prop:theta} at the cost of additional corrective terms. 
\begin{proposition}\label{prop:thetasto}
Assume  \textbf{(A2-4)} and \textbf{(A7-13)}. Consider the recursion in \eq{daquad} for any constant step-size $\gamma$  such that $\gamma \leq   \frac{1}{4\kappa L d}$. Then 
 \[
  \frac{1}{2}{\EE \Vert \bar \theta_{n}-\theta_*\Vert_\Sigma^2}
  \leq
    { 2\frac{D_h(\theta_*,\theta_{0})}{\gamma n}}
      +\frac{ 32 d }{n}\big(\sigma^2+\kappa  \Vert  \theta_{*}-\tnc\Vert_\Sigma^2\big)
+\frac{16\kappa d }{n^2}\bigg( \frac{5D_h(\theta_*,\theta_{0})}{\gamma }+g(\theta_0)\bigg).
   \]
\end{proposition}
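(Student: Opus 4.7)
The plan is to follow the strategy used for Proposition~\ref{prop:theta} in the purely additive-noise case, and carefully absorb the multiplicative component $\zeta_n = x_n\otimes x_n - \Sigma$ of the oracle. Writing
\[
\nabla f_n(\theta) = \nabla f(\theta) - \xi_n + \zeta_n(\theta - \tnc),
\]
the key observation is the decomposition $\zeta_n(\theta_{n-1}-\tnc) = \zeta_n(\theta_{n-1}-\theta_*) + \zeta_n(\theta_*-\tnc)$: conditionally on $\mathcal{F}_{n-1}$, the second piece is a mean-zero perturbation driven by the fixed vector $\theta_*-\tnc$ and behaves like an extra additive noise with ``variance'' $\kappa d \Vert\theta_*-\tnc\Vert_\Sigma^2$, whereas the first piece depends on the iterate and will have to be controlled via a trajectory bound.

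First I would re-use the telescoping identity from the proof of Proposition~\ref{prop:theta}. Since $\nabla f$ is linear, $\eta_n-\eta_0 = -\gamma\sum_{k=0}^{n-1}\nabla f_{k+1}(\theta_k)$ rewrites as
\[
\Sigma(\bar\theta_n - \theta_*) = \frac{\eta_0-\eta_n}{\gamma n} - \nabla f(\theta_*) + \frac{1}{n}\sum_{k=0}^{n-1}\xi_{k+1} - \frac{1}{n}\sum_{k=0}^{n-1}\zeta_{k+1}(\theta_k-\tnc),
\]
and combining this with the first-order condition $-\nabla f(\theta_*)\in\partial g(\theta_*)$ expresses $\Vert\bar\theta_n-\theta_*\Vert_\Sigma^2 = \Vert\Sigma(\bar\theta_n-\theta_*)\Vert_{\Sigma^{-1}}^2$ as a sum of a bias term, an additive-noise martingale, and a multiplicative-noise martingale. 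Applying $(a+b+c)^2\leq 3(a^2+b^2+c^2)$ reduces the problem to bounding each piece in the $\Sigma^{-1}$-norm in expectation.

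For the bias term I would invoke the deterministic control derived in the proof of Proposition~\ref{prop:phifunction}, which gives an $O(D_h(\theta_*,\theta_0)/(\gamma n))$ estimate provided $h-\gamma f$ is convex; this is exactly Assumption~\textbf{(A13)} for $\gamma\leq 1/L$, which is implied by $\gamma\leq 1/(4\kappa L d)$. For the additive-noise martingale, the conditional covariance bound $\EE[\xi_{k+1}\otimes\xi_{k+1}\mid\mathcal{F}_k]\preccurlyeq \sigma^2\Sigma$ from \textbf{(A10)} delivers the $\sigma^2 d/n$ contribution. For the multiplicative-noise martingale, the kurtosis assumption \textbf{(A11)} provides the key bound $\EE[\Vert\zeta_{k+1} v\Vert_{\Sigma^{-1}}^2\mid\mathcal{F}_k]\leq \kappa d\,\Vert v\Vert_\Sigma^2$ for any $\mathcal{F}_k$-measurable $v$; applied with $v=\theta_k-\tnc$ split as $(\theta_k-\theta_*)+(\theta_*-\tnc)$, one piece produces the $\kappa d\Vert \theta_*-\tnc\Vert_\Sigma^2/n$ contribution and the other a term proportional to $\tfrac{\kappa d}{n^2}\sum_{k=0}^{n-1}\EE\Vert\theta_k-\theta_*\Vert_\Sigma^2$.

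The main obstacle is this last trajectory sum: controlling $\sum_{k}\EE\Vert\theta_k-\theta_*\Vert_\Sigma^2$ under the full multiplicative-plus-additive noise. I would handle it by re-running the Bregman-divergence recursion used in the proof of Proposition~\ref{prop:phifunction} under expectation for the iterates of \eq{daquad}. Convexity of $h_k-k\gamma f$ yields a one-step descent inequality on $\EE D_{h_k}(\theta_*,\theta_k)$, up to a stochastic remainder whose conditional second moment is estimated again by \textbf{(A10)}--\textbf{(A11)}. The step-size restriction $\gamma\leq 1/(4\kappa L d)$ is precisely what makes the noise-amplified recursion absorb the multiplicative perturbation and deliver a uniform bound of the form $\sum_k \EE\Vert\theta_k-\theta_*\Vert_\Sigma^2 \lesssim D_h(\theta_*,\theta_0)/\gamma + g(\theta_0)$, the $g(\theta_0)$ arising from the time-varying piece $k\gamma g$ in $h_k$ together with the lower bound $g\geq 0$ of \textbf{(A12)} (used to drop cross terms). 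Translating this through \eq{normlowerfunction} and inserting it into the three-way decomposition yields the $16\kappa d/n^2$ correction and finishes the bound.
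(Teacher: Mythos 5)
Your overall architecture --- rewriting $\Sigma(\bar\theta_n-\theta_*)$ through the telescoping of the dual recursion, splitting the multiplicative noise as $\zeta_{k+1}(\theta_k-\tnc)=\zeta_{k+1}(\theta_k-\theta_*)+\zeta_{k+1}(\theta_*-\tnc)$, bounding the two martingales with \textbf{(A10)} and \textbf{(A11)}, and closing the loop with a Bregman-recursion trajectory bound on $\sum_k\EE\Vert\theta_k-\theta_*\Vert_\Sigma^2$ --- is exactly the paper's (Lemmas~\ref{lemma:alphabetasto} and~\ref{lemma:alphabetafunction}). But there is a genuine gap in your treatment of the ``bias term'' $\frac{\eta_0-\eta_n}{\gamma n}-\nabla f(\theta_*)$. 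This quantity involves the \emph{stochastic} dual iterate $\eta_n$, so the deterministic control of Proposition~\ref{prop:phifunction} does not apply to it. Writing $-\nabla f(\theta_*)=(\eta_n^*-\eta_0^*)/(\gamma n)$ for the reference dual sequence $\eta_n^*=\nabla h(\theta_*)-n\gamma\nabla f(\theta_*)$, the bias term equals $\big((\eta_0-\eta_0^*)-(\eta_n-\eta_n^*)\big)/(\gamma n)$, and one must prove that $\EE\Vert\eta_n-\eta_n^*\Vert_{\Sigma^{-1}}^2$ grows at most linearly in $n$. This is the content of the paper's Lemma~\ref{lemma:etasto}; its proof is not a martingale second-moment computation but a one-step contraction that requires the convexity of $h-\gamma\frac12\Vert\cdot\Vert_T^2$ with $T=\EE[\langle x,\Sigma^{-1}x\rangle\, x\otimes x]\preccurlyeq\kappa d\,\Sigma$, together with the monotonicity of $\partial(n\gamma g)$ --- and it is one of the two places where the factor $d$ in the step-size restriction $\gamma\leq 1/(4\kappa Ld)$ is actually used (the other being the trajectory lemma, as you note). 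Your sketch omits this ingredient entirely, and without it the bias term cannot be bounded.

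A second, related omission: even with such a drift lemma, the natural bound on the bias involves $\Vert\nabla h(\theta_0)-\nabla h(\theta_*)\Vert_{\Sigma^{-1}}^2/(\gamma n)^2$, which may be infinite. To reach the stated bound, which depends only on $D_h(\theta_*,\theta_0)$, the paper runs the whole decomposition a second time on the difference between the stochastic iterates and the \emph{noiseless} DA trajectory $(\omega_n,\phi_n)$ started from the same $\theta_0$ (so that the dual difference starts from $0$), bounds $\Vert\bar\phi_n-\theta_*\Vert_\Sigma^2$ separately via Proposition~\ref{prop:phifunction}, and re-estimates the covariance of the modified noise $\xi_n-\zeta_n(\phi_{n-1}-\tnc)$ using \textbf{(A11)} and the deterministic decay of $\Vert\phi_{n-1}-\theta_*\Vert_\Sigma^2$. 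Your plan gestures at Proposition~\ref{prop:phifunction} but never introduces this coupling, so as written it does not produce the claimed $D_h(\theta_*,\theta_0)/(\gamma n)$ leading term. A minor further imprecision: your trajectory bound $\sum_k\EE\Vert\theta_k-\theta_*\Vert_\Sigma^2\lesssim D_h(\theta_*,\theta_0)/\gamma+g(\theta_0)$ drops $O(n\gamma)$ contributions coming from the gradient noise and from $\Vert\theta_*-\tnc\Vert_\Sigma^2$ (see Lemma~\ref{lemma:alphabetafunction}); these are harmless only because $\gamma\leq 1/(4\kappa Ld)$ lets them be folded into the $O(d/n)$ variance term, which deserves to be said explicitly.
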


We can make the following remarks:

\vspace*{-.1444cm}

\begin{itemize}
\item 
The proof technique is similar to the one of Proposition~\ref{prop:theta}. Nevertheless its complexity comes from the extra multiplicative noise $\zeta_n$ in the gradient estimate  (see \myapp{proofsto}). 

\vspace*{-.1444cm}

\item 
The result is only proven for $\gamma \leq 1/(4\kappa L d)$ which seems to be a proof artifact. Indeed we empirically observed (see \mysec{exp}) that the iterates still converge to the solution for all $\gamma \leqslant 1 / (2 \EE\Vert x_n\Vert_2^2)$. 

\vspace*{-.1444cm}

\item The global bound leads to a rate of $O(1/n)$ without strong convexity, which is optimal for stochastic approximation, even with strong convexity   \citep{NemYud83}. We recover the   terms of Proposition~\ref{prop:theta} pertubed by: (a) one corrective term of order $O(d/n)$ which depends on the distance between the solution $\theta_{*}$ and the global minimizer $\tnc$ of the quadratic function $f$, which corresponds to the covariance of the multiplicative noise at the optimum, and (b) two residual terms of order $O(d/n^{2})$. It would be interesting to study whether these two terms can be removed.

\vspace*{-.1444cm}

\item  As in Proposition~\ref{prop:theta}, the bias is also $O\big(\frac{1} {(\gamma n)^2}\Vert \nabla h (\theta_0)-\nabla h (\theta_*)\Vert^2_{\Sigma^{-1}}\big)$ for specific starting points (see proof in \myapp{proofsto} for details).

\vspace*{-.1444cm}

\item It is worth noting that in the constrained case ($g=\mathbbm{1}_{\mathcal{C}}$ for a bounded convex set $\mathcal{C}$), the covariance of the noisy oracle is simply bounded by  $(\kappa \tr \Sigma r^2+\sigma^2) \Sigma$ where we denote by $r=\max_{\theta\in\mathcal C}\Vert \theta-\tnc\Vert_2$ (see \myapp{stobounded} for details). Therefore Proposition~\ref{prop:theta} already implies  $\textstyle  \frac{1}{2}{\EE \Vert  \bar \theta_n - \theta_*\Vert_{\Sigma}^2}
  \leq
  2 \frac{D_h(\theta_{*},\theta_{0})}{\gamma n} +\frac{8d}{n} (\sigma^2 +\kappa r^2 \tr \Sigma)$. Moreover the result holds then for any step-size $\gamma \leqslant 1/L$, which is bigger  than allowed for $g=0$~\citep{bm1}.
\end{itemize}

\subsection{Convergence results on the objective function}
In this section we present the convergence properties of the SDA method on the  objective function $\psi=f+g$ rather than on the norm $\Vert\cdot\Vert_{\Sigma}$.

We first start with a disclaimer: it is not possible to obtain general non-asymptotic results on the convergence of the SDA iterates in term of   function values without additional assumptions on the regularization $g$. We indeed show in \myapp{lowerquad} that, even in the simple case of a linear function  $f(\theta)=\langle a,\theta\rangle$, for $a\in{\RR^d}$, we can always find, for any finite time horizon $N$, a quadratic non-strongly convex regularization function $g_N$ such that for any unstructured noise of variance $\sigma^2$, the function value $\psi_N(\theta)=f(\theta)+g_N(\theta)$ evaluated in the SDA iterates at time $N$ is lowerbounded by
\[
\psi_N(\bar \theta_{N})-\psi_N(\theta_{*})\geq \frac{\sigma^2}{12}.
\]
This lower bound is specific to the SDA algorithm and we underline that the regularization $g_N$ depends on the horizon $N$. However this result still prevents the possibility of a universal non-asymptotic convergence result on the function value for the SDA iterates for general quadratic and linear functions. We note that this does not apply  to the setting of Proposition~\ref{prop:theta} and Proposition~\ref{prop:thetasto} since $\Sigma=0$ for a linear function and the  vector $q$ defining the linear term $\langle q, \theta \rangle$ cannot be in the column space of $\Sigma$, thus violating Assumption \textbf{(A5)}.  We conjecture that in the  setting of Assumption~\textbf{(A5)}, the lower bound is $O(1/\sqrt{n})$ as well.

We  now provide some specific examples for which we can prove convergence in function values. 

\paragraph{Quadratic objectives with smooth regularization.}
When there exists a constant $L_g\geq 0$ such that $L_g f-g$  is convex on $\mathring{\mathcal X}$ then results from Propositions \ref{prop:theta} and  \ref{prop:thetasto} directly imply convergence of the composite objective to the optimum through
\[
 \psi(\bar \theta_n)-\psi(\theta_*)\leq \frac{(L_g+1)}{2}\Vert \bar \theta_n -\theta_*\Vert_\Sigma^2 = O(1/n),
\]
with precise constants from Propositions \ref{prop:theta} and  \ref{prop:thetasto}. Indeed we have in that case $(L_g+1) f-\psi$  convex and  this would be directly implied by Proposition~\ref{prop:lc} in \myapp{deterministic}. 

An easy but still interesting application is the non-regularized case ($g=0$) when the optimum~$\theta_*$ is the global optimum $\tnc$ of $f$,  because then $\psi(\theta)-\psi(\theta_*)=\frac{1}{2}\Vert \theta -\theta_*\Vert_\Sigma^2$. Thus this extends previous results on function values~\citep{DieFlaBac16} to non-Euclidean geometries. 

\paragraph{Constrained problems.}

When $g$ is the indicator function of a convex set $\mathcal{C}$ then by definition the primal iterate $\theta_n\in\mathcal{C}$ and by convexity  $\bar \theta_n\in\mathcal{C}$. Therefore  $\psi(\bar \theta_n)=f(\bar \theta_n)+\mathbbm{1}_{\mathcal{C}}(\bar \theta_n)=f(\bar \theta_n)$ and we obtain with the Cauchy-Schwarz inequality:
\BEAS
 f(\bar \theta_n)-f(\theta_*)
 &=&
 \langle \nabla f(\theta_*),\bar \theta_n-\theta_*\rangle+\frac{1}{2}\Vert \bar \theta_n-\theta_*\Vert_\Sigma^2\\[-.1cm]
 &\leq& \Vert \theta_*-\tnc\Vert_2 \Vert \bar \theta_n-\theta_*\Vert_\Sigma+\frac{1}{2}\Vert \bar \theta_n-\theta_*\Vert_\Sigma^2 = O\Big(\frac{\Vert \theta_*-\tnc\Vert_2 }{\sqrt{n}}\Big),
\EEAS
with precise constants from Propositions \ref{prop:theta} and  \ref{prop:thetasto}. Hence we obtain a global rate of order $O(1/\sqrt{n})$ for the convergence of the function value in the constrained case. 

These rates may be accelerated to $O(1/{n})$ for certain specific convex constraints or when the global optimum $\tnc\in\mathcal C$;  \citet{DucRua16} recently obtained asymptotic convergence results for the iterates  in the cases of linear and $\ell_2$-ball constraints for linear objective functions. Their results can be directly extended to asymptotic convergence of function values and very probably to all strongly convex sets \citep[see, e.g.,][]{Via83}. However, even for the simple $\ell_2$-ball constrained problem, we were not able to derive non-asymptotic convergence rates for   function values. 

However the global rate of order $O(1/\sqrt{n})$ is statically non-improvable in general. In \myapp{loweroracle}, we relate the stochastic convex optimization problem \citep{agarwal2010information} to the statistical problem of convex aggregation of estimators \citep{tsybakov2003optimal,Lec06}. These authors showed lower bounds on the performance of such estimators which provide us lower bounds on the performance of any stochastic algorithm to solve constrained problems.
In Proposition~\ref{prop:oraclequad} and Proposition~\ref{prop:oraclelin} of \myapp{loweroracle}, we derive more precisely lower bound results for linear and quadratic functions for certain ranges of $n$ and $d$ confirming the optimality of the convergence rate $O(1/\sqrt{n})$. This being said, in our experiments in \mysec{exp}, we observed that the convergence of function values follows closely the convergence in the Mahalanobis distance.

\section{Parallel between dual averaging and mirror descent}
\label{sec:mdda}

In this section we compare the behaviors of DA and MD algorithms, by highlighting their similarities and differences, in particular in terms of continuous-time interpretation.

\subsection{Lazy versus greedy projection methods}

DA and MD are  often described in the online-learning literature as ``lazy'' and ``greedy'' projection methods \citep{Zin03}. Indeed, the difference between these two methods  is more apparent in the Euclidean projection case (when $g=\mathbbm{1}_{\mathcal{C}}$ and $h=\frac{1}{2}\Vert \cdot \Vert_2^2$). MD is then projected gradient descent and may be written  under its primal-dual form as:
\[ \eta^{\text{md}}_n=\theta^{\text{md}}_{n-1}-g^{\text{md}}_{n}
\quad \text{with} \quad
g^{\text{md}}_{n}\in\partial f(\theta^{\text{md}}_{n-1}) 
\quad \text{ and } \quad 
\theta^{\text{md}}_n=\argmin_{\theta\in\mathcal{C}} \Vert \eta^{\text{md}}_n-\theta\Vert_2.
 \]
Whereas DA takes the form 
\[
 \eta^{\text{da}}_n=\eta^{\text{da}}_{n-1}-g^{\text{da}}_{n} 
 \quad \text{with} \quad
g^{\text{da}}_{n}\in\partial f(\theta^{\text{da}}_{n-1}) 
\quad \text{ and } \quad  \theta^{\text{da}}_n=\argmin_{\theta\in\mathcal{C}} \Vert \eta^{\text{da}}_n-\theta\Vert_2.
\]
Therefore, imagining the subgradients $g_n$ are provided by an adversary without the need to compute the primal sequence $(\theta_n)$, no projections are needed to update the dual sequence $(\eta^{\text{da}}_n)$, and this one moves far away in the asymptotic direction of the  gradient at the optimum $\nabla f(\theta_*)$. Furthermore the primal iterate  $\theta^{\text{da}}_n$ is simply obtained, when required, by projecting back the dual iterate in the constraint set. Conversely, the MD dual iterate $\eta^{\text{md}}_n$ update calls for $\theta^{\text{md}}_{n-1}$, and therefore a projection step is unavoidable. Thereby MD iterates $(\eta^{\text{md}}_n,\theta^{\text{md}}_n)$ are going, at each iteration,  back-and-forth between the boundary and the outside of the convex set $\mathcal{C}$.

\subsection{Strongly convex cases}
MD converges linearly  for  smooth and strongly convex functions $f$, in the absence of a regularization component  \citep{LuFreNes16} or for Euclidean geometries
\citep{Nes13}. However we were not able to derive faster convergence rates for DA when the function $f$ or the regularization $g$ are strongly convex. Moreover the only results we found in the literature are about (a) an alteration of the dual gradient method \citep[][Section 4]{dev} which is itself a modification of DA with an additional projection step proposed by \citet{Nes13} for smooth optimization, (b) the strongly convex regularization $g$ which enables \citet{xia10} to obtain a $O(1/\mu n)$ convergence rate in the stochastic case.

At the simplest level, for $h=\frac{1}{2}\Vert\cdot\Vert_{2}^{2}$  and  $f=0$,  MD is equivalent to the proximal point algorithm \citep{Mar70}
$
\theta^{\text{md}}_{n}=\argmin_{\theta \in {\RR^d}} \big\{ g(\theta) +\frac{1}{\gamma} \Vert \theta-\theta^{\text{md}}_{n-1}\Vert_{2}^{2}\big \},
$
whereas DA, which is not anymore iterative, is such that 
$
\theta^{\text{da}}_{n}=\argmin_{\theta \in {\RR^d}} \big\{ g(\theta) +\frac{1}{\gamma n } \Vert \theta \Vert_{2}^{2}\big \}.
$
For the squared $\ell_2$-regularization $g(\theta)=\frac{\nu}{2}\Vert \theta-\theta_*\Vert_2^2$, we compute exactly (see \myapp{compa}) 
\[
g(\theta^{\text{md}}_n)-g(\theta_*)=\Big(\frac{1}{\gamma \nu}\Big)^n[g(\theta^{\text{md}}_0)-g(\theta_*)] \quad \text{ and } \quad g(\theta^{\text{da}}_n)-g(\theta_*)=\frac{g(\theta_*)}{(\gamma n)^2}.
\]
Therefore the convergence of DA can be dramatically slower than MD. However when noise is present, its special structure may be leveraged to get interesting results. 

\subsection{Continuous time interpretation of DA et MD}\label{sec:continuousint}

Following \citet{NemYud83,Kri15,WibWilJor16} we propose a continuous interpretation of these methods  for $g$ twice differentiable. Precise computations are derived in \myapp{continuousint}. 

The MD iteration in  \eq{mdprox}  may be viewed as a forward-backward Euler discretization of the MD ODE  
\begin{equation}\label{eq:odemd1}
\dot \theta = -\nabla^2h(\theta)^{-1}[\nabla f(\theta)+\nabla g(\theta)].
\end{equation}
On the other hand, the ODE associated to DA takes the form
\begin{equation}\label{eq:odeda1}
 \dot \theta= -\nabla^2(h(\theta)+t g(\theta))^{-1}(\nabla f(\theta)+\nabla g(\theta)).
\end{equation}
It is worth noting that these ODEs are very similar, with an additional term $t g(\theta)$ in the inverse mapping $\nabla^2(h(\theta)+t g(\theta))^{-1}$ which may slow down the DA dynamics.

In analogy with the discrete case, the Bregman divergences $D_h$ and $D_{h+tg}$ are respectively   Lyapunov functions for the MD and the DA ODEs \citep[see, e.g.,][]{Kri15} and we notice  in \myapp{continuousint} the continuous time argument  really mimics the proof of Proposition~\ref{prop:phifunction} without the technicalities associated with    discrete time.  Moreover we recover the variational interpretation of \citet{Kri15,WibWilJor16,WilRecJor16}: the Lyapunov function generates the dynamic in the sense that a function $L$ is first chosen and secondly a dynamics, for which~$L$ is a Lyapunov function, is then designed.  In this way MD and DA are the two different dynamics associated to the two different Lyapunov functions $D_h$ and $D_{h+tg}$. We also provide in \myapp{continuousint} a slight extension to the noisy-gradient case.

\section{Experiments}\label{sec:exp}
\label{sec:simulations}

In this section, we illustrate our theoretical results on synthetic examples. We   provide additional experiments on a standard machine learning benchmark in \myapp{expsido}.

\paragraph{Simplex-constrained  least-squares regression with synthetic data.}
We consider normally distributed inputs $x_n \in \RR^d$ with a covariance matrix $\Sigma$ that has random eigenvectors and eigenvalues $1/k$, for $k=1,\dots,d$ and a random global optimum $\theta_\Sigma\in[0,+\infty)^d$. The outputs $y_n$ are generated from a linear function with homoscedatic noise with unit signal to noise-ratio ($\sigma^2=1$). We denote by $R^2=\tr \Sigma$ the average radius of the data and we show results averaged over 10 replications. 

We consider the problem of least-squares regression constrained on the simplex $\Delta_d$ of radius $r=\Vert\theta_\Sigma\Vert_1/2$ , i.e.,  $\min_{\theta\in r\Delta_d}\EE (\langle x_n,\theta\rangle-y_n)^2$, for $d=100$.
We compare the performance of SDA and SGD algorithms with different settings of the step-size $\gamma_n$, constant or proportional to $1/\sqrt{n}$.  In the left plot of \myfig{synthetic} we show the performance on the objective function and on the right plot, we show the performance on the   squared Mahalanobis norm $\Vert\cdot\Vert_\Sigma^2$. All costs are shown in log-scale, normalized so that the first iteration leads to $f(\theta_0)-f(\theta_*)=1$. We can make the following observations (we only show results on Euclidean geometry since results under the negative entropy geometry were very similar):

\vspace*{-.1444cm}

\BIT
\item

 With constant step-size, SDA converges to the solution at rate $O(1/n)$ whereas the SGD algorithm  does not converge to the optimal solution.

\vspace*{-.1444cm}

 \item
 With decaying step-size $\gamma_n=1/(2R^2\sqrt{n})$, SDA and SGD converge first at rate $O(1/\sqrt{n})$, then at rate  $O(1/n)$, taking  finally advantage of the strong-convexity of the problem.

\vspace*{-.1444cm}

 \item
 We note (a) there is no empirical difference between the performance on the objective function and the  squared distance $\Vert\cdot\Vert_\Sigma^2$, (b) with decreasing step-size, SGD and SDA behave very similarly. 
 \EIT

 \begin{figure}[!h]
\centering
\begin{minipage}[c]{.45\linewidth}
\includegraphics[width=\linewidth]{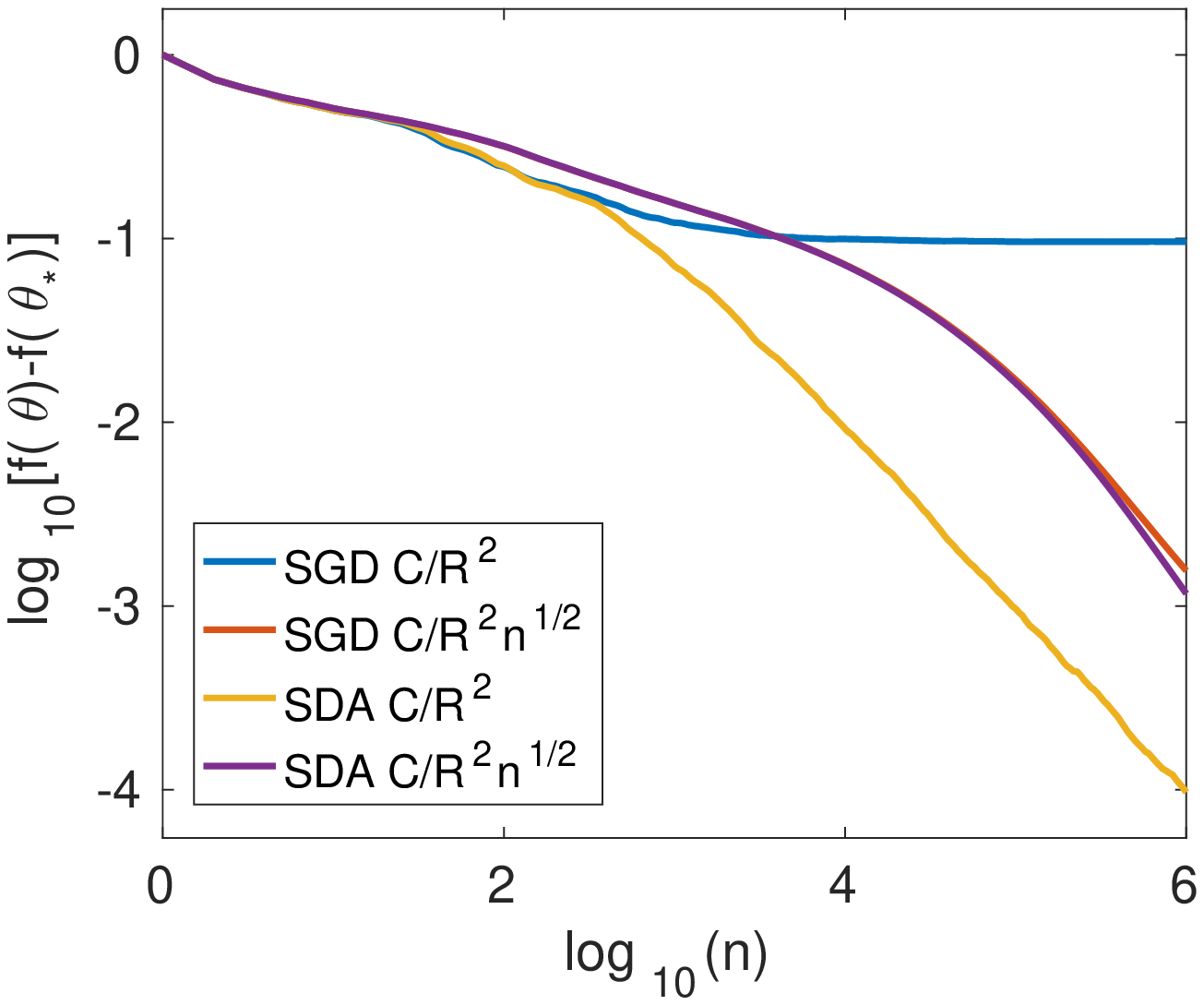}
   \end{minipage} \hspace*{.08\linewidth}
   \begin{minipage}[c]{.45\linewidth}
\includegraphics[width=\linewidth]{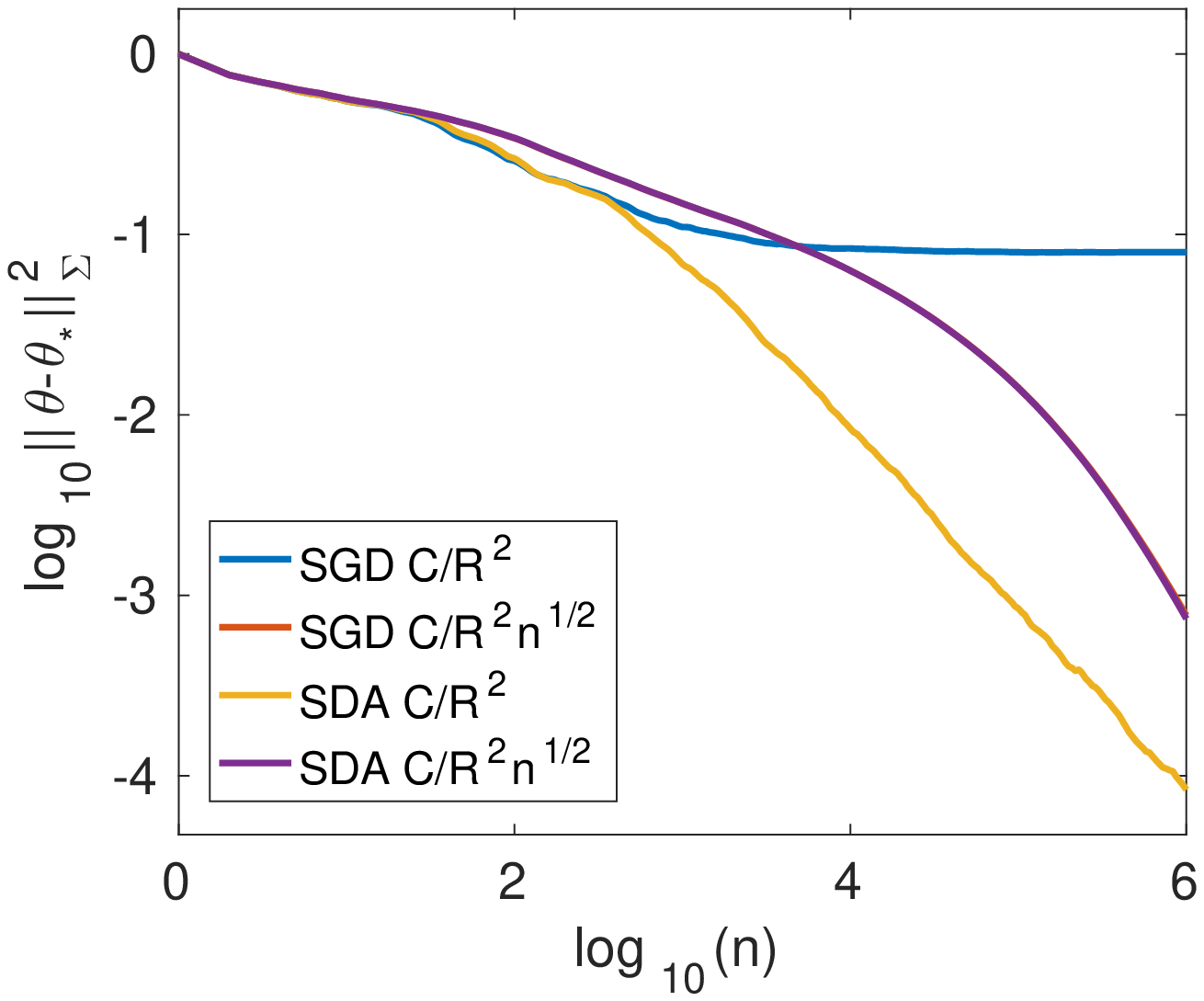}
   \end{minipage} 
  \caption{Simplex-constrained least-squares regression with synthetic data. Left: Performance on the objective function. Right: Performance on the Mahalanobis norm $\Vert\cdot\Vert_\Sigma^2$.}
     \label{fig:synthetic}
\end{figure}

\section{Conclusion}

In this paper, we proposed and analyzed the first algorithm to achieve a convergence rate of $O(1/n)$ for stochastic composite objectives, without the need for strong convexity. This was achieved by considering a constant step-size and averaging of the primal iterates in the dual averaging method.

Our results only apply to expectations of quadratic functions (but to any additional potentially non-smooth terms). In fact, constant step-size stochastic dual averaging is not convergent for general smooth objectives; however, as done in the non-composite case by~\citet{bm1}, one could iteratively solved quadratic approximations of the smooth problems with the algorithm we proposed in this paper to achieve the same rate of $O(1/n)$, still with robustness to ill-conditioning and efficient iterations. 
Finally, it would be worth considering accelerated extensions to achieve a forgetting of initial conditions in $O(1/n^2)$.

\subsection*{Acknowledgements}
 The authors would like to thank Aymeric Dieuleveut and  Damien Garreau for helpful discussions.
 \bibliographystyle{plainnat}
\bibliography{colt2017}

\newpage

\appendix

\section{Unambiguity of the primal iterate}\label{app:dadefined}
We describe here conditions under which the primal iterate $\theta_n$ in \eq{da} is correctly defined. Since $h$ is strictly convex, $h_n^*$ is continuously differentiable on $ \mathring \dom  h_n^*$ \citep[see][Theorem 4.1.1]{marechal}. Therefore the primal iterate $\theta_n$ is well defined if the dual iterate $\eta_n\in \mathring\dom  h_n^*$. It is, for example, the case under two natural assumptions as shown by the next lemma which is an adaption of Lemma 2 by \citet{BauBolTeb15}.
\begin{lemma} \label{lemma:dadefined}
We make the following assumptions:
 \begin{description}
  \item[\textbf{(B1)}]   $h$ or $g$ is supercoercive.
  \item[\textbf{(B2)}]   $\argmin_{\theta\in\mathcal{X}} \psi(\theta)$ is compact and $h$ bounded below.
 \end{description}
 Under \textbf{(B1)} or \textbf{(B2)} the primal iterates $(\theta_n)$ defined in \eq{da} are well defined.
\end{lemma}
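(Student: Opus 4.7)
The plan is to establish, by induction on $n$, that the dual iterate $\eta_n$ lies in $\mathring{\dom}\, h_n^*$, so that $\theta_n = \nabla h_n^*(\eta_n)$ is unambiguously defined. Strict convexity of the Legendre function $h$ is inherited by $h_n = h + n\gamma g$, and $\nabla h_n^*$ is well-defined precisely on the interior of $\dom h_n^*$, equal to the (necessarily unique) minimizer of the Fenchel problem. Thus it suffices to show that at each step $n$ the strictly convex function
\[
F_n(\theta) := h_n(\theta) - \langle \eta_n, \theta \rangle = h(\theta) + n\gamma g(\theta) - \langle \eta_n, \theta \rangle
\]
attains its infimum on $\mathring{\dom}\, h$; uniqueness is automatic.

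Under \textbf{(B1)}, the argument is essentially immediate. If $h$ is supercoercive, then $h_n$ is supercoercive too, because $g$ is proper convex lsc and hence minorized by an affine function, and perturbing a supercoercive function by an affine lower bound preserves supercoercivity. If instead $g$ is supercoercive, the same argument applies with the roles of $h$ and $n\gamma g$ swapped (noting that $n\gamma g$ is supercoercive for $n \geq 1$, $\gamma > 0$). Supercoercivity of $h_n$ is the Fenchel--Young characterization of $\dom h_n^* = \RR^d$, so any $\eta_n \in \RR^d$ lies in $\mathring{\dom}\, h_n^*$.

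Under \textbf{(B2)}, the argument is more delicate. Compactness of $\argmin_{\theta \in \mathcal{X}} \psi(\theta)$ together with convexity of $\psi$ forces every nonempty sublevel set of $\psi$ to be bounded, equivalently $\psi^\infty(d) = f^\infty(d) + g^\infty(d) > 0$ for every $d \neq 0$. Combined with $h^\infty(d) \geq 0$ (from $h$ bounded below), the goal is to show $F_n^\infty(d) > 0$ for all $d \neq 0$, which yields level-boundedness of $F_n$ and hence existence of its minimum. Concretely, I would fix $\bar\theta \in \argmin \psi$, use the optimality condition $-\nabla f(\bar\theta) \in \partial g(\bar\theta)$ together with the subgradient inequality to replace $n\gamma g$ by an affine lower bound, and then plug in the telescoped identity $\eta_n = \eta_0 - \gamma \sum_{k=0}^{n-1} \nabla f(\theta_k)$, so that the convexity estimate $\langle \nabla f(\theta_k) - \nabla f(\bar\theta), d \rangle \leq f^\infty(d)$ (and its counterpart in the direction $-d$) produces a contribution proportional to $n\gamma[f^\infty(d) + g^\infty(d)] = n\gamma\psi^\infty(d)$ in $F_n^\infty(d)$.

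The main obstacle is precisely this last combinatorial step. Convexity of $f$ provides only one-sided estimates for $\langle \nabla f(\theta_k), d \rangle$, so aligning the subgradient bound on $g$, the recession of $h$, and the recession terms extracted from the cumulative sum $\sum_k \nabla f(\theta_k)$ so that they combine into $\psi^\infty$ (rather than cancel) is the delicate bookkeeping step. This parallels Lemma 2 of Bauschke--Bolte--Teboulle for mirror descent, with the extra care required by the DA recursion, which carries the full cumulative regularizer $n\gamma g$ inside $h_n$ instead of a single copy of $\gamma g$. The degenerate sub-case of directions with $g^\infty(d) = +\infty$ (e.g., when $\dom g$ is bounded) is immediate, whereas the nontrivial work concerns directions where both $f^\infty(d)$ and $g^\infty(d)$ are finite.
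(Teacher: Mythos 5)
Your treatment of \textbf{(B1)} is correct and coincides with the paper's: supercoercivity of $h$ or $g$ passes to $h_n=h+n\gamma g$ (an affine minorant of the other summand suffices), and supercoercivity of $h_n$ is equivalent to $\dom h_n^*=\RR^d$, so every $\eta_n$ is admissible. No issue there.

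The gap is in \textbf{(B2)}, and it is not merely the ``delicate bookkeeping'' you defer -- the recession-function route you sketch does not close. To get $F_n^\infty(d)>0$ you need a \emph{lower} bound on $\gamma\sum_{k}\langle\nabla f(\theta_k),d\rangle$, and convexity of $f$ only gives $\langle\nabla f(\theta_k),d\rangle\geq -f^\infty(-d)$; the upper bound $\langle\nabla f(\theta_k),d\rangle\leq f^\infty(d)$ that you invoke goes the wrong way for minorizing $F_n$. What you actually obtain is $F_n^\infty(d)\geq h^\infty(d)+n\gamma\big(g^\infty(d)-f^\infty(-d)\big)-\langle\eta_0,d\rangle$, and $g^\infty(d)-f^\infty(-d)$ is in general strictly smaller than $\psi^\infty(d)=f^\infty(d)+g^\infty(d)$ (it can even be negative, e.g.\ when the coercivity of $\psi$ comes from a nonlinear $f$ while $g^\infty(d)=0$), so the contribution ``proportional to $n\gamma\psi^\infty(d)$'' you announce does not materialize. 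The paper argues differently: it writes $h_n^*(\eta_n)=\sup_{\theta\in\mathcal X}\{\langle\eta_n,\theta\rangle-h_n(\theta)\}$, substitutes the telescoped dual variable $\eta_n=\eta_0-\gamma\sum_{i=1}^n\nabla f(\theta_{i-1})$, uses the global minorization $f(\theta_{i-1})+\langle\nabla f(\theta_{i-1}),\theta-\theta_{i-1}\rangle\leq f(\theta)$ to compare this supremum with $-\inf_{\theta\in\mathcal X}\{h(\theta)+n\gamma\psi(\theta)\}$ plus an explicit finite constant $\gamma\sum_{i=1}^n\big(f(\theta_{i-1})-\langle\nabla f(\theta_{i-1}),\theta_{i-1}\rangle\big)$, and then concludes $\eta_n\in\mathring\dom h_n^*$ from coercivity of $\psi+\mathbbm{1}_{\mathcal X}$ (which follows from compactness of $\argmin_{\mathcal X}\psi$) together with boundedness below of $h$, via Proposition~11.15 of Bauschke--Combettes. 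You should either reproduce this global conjugate-value argument or explain how to control the sign of the accumulated gradient term; as written, your (B2) case is an unproved plan whose central step would fail.
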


\begin{proof}
Since $h$ is strictly convex, $h_n^*$ is continuously differentiable on $ \mathring \dom  h_n^*$ \citep[see][Theorem 4.1.1]{marechal}. Therefore the primal iterate $\theta_n$ is well defined if the dual iterate $\eta_n\in \mathring\dom  h_n^*$.
\begin{itemize}                                                                                                                                                                                                                                                                                                       \item If $h$ or $g$ is supercoercive then $h_n$ is supercoercive \cite[see][Proposition 11.13]{comb} and it follows from \citet[Chapter E, Proposition 1.3.8]{marechal} that $\dom h_n^* ={\RR^d}$.
\item  If $\argmin_{\theta\in\mathcal{X}}\{ \psi(\theta)\}$ is compact then  $\psi+\mathbbm{1}_\mathcal{X}$ is coercive. Moreover
\BEAS
 h_n^*(\eta_n)
 &=&
 \sup_{\theta\in\mathcal{X}}\big\{ \langle \eta_n,\theta\rangle -h_n(\theta) \big\} \text{ since} \mathcal{X}\subset \overline{ \dom }h \\
 &=&
 -\inf_{\theta\in\mathcal{X}} \Big\{ h(\theta)+\gamma \sum_{i=1}^n(g(\theta)+f(\theta_{i-1})+\langle \nabla f(\theta_{i-1}), \theta-\theta_{i-1}\rangle) \Big\}\\
 &&+\gamma \sum_{i=1}^n(f(\theta_{i-1})-\langle \nabla f(\theta_{i-1}),\theta_{i-1}\rangle)\\
 &\leq&
 -\inf_{\theta\in\mathcal{X}} \big\{ h(\theta)+ n \gamma (g(\theta)+f(\theta)) \big\} \text{ by convexity of } f\\
  &&+\gamma \sum_{i=1}^n(f(\theta_{i-1})-\langle \nabla f(\theta_{i-1}),\theta_{i-1}\rangle).
 \EEAS
Therefore $\eta_n\in \mathring\dom  h_n^*$ since $\psi+\mathbbm{1}_\mathcal{X}$ is coercive and $h$ bounded below \citep[see][Proposition 11.15]{comb}.                                                                                                                                                                                                                                                                                                     \end{itemize}
\end{proof}

\section{Proof of convergence of deterministic DA}\label{app:deterministic}

We first describe a new notion of smoothness defined by \citet{BauBolTeb15}. Then we present our extension of the Bregman divergence to the non-smooth function $g$ to finally prove Proposition~\ref{prop:phifunction}.

\subsection{A  \emph{Lipschitz-like/convexity condition}}

Classical results on the convergence of optimization algorithms in non-Euclidean geometry assume on one hand that the function $h$ is strongly convex and on the other hand the function $f$ is Lipschitz or smooth. 
Following \citet{BauBolTeb15,LuFreNes16}, we consider a different assumption which combines the smoothness  of $f$ and the strong convexity of $h$  on a single condition called \emph{Lipschitz-like/Convexity Condition} by \citet{BauBolTeb15} and denoted by \textbf{(LC)}:
\begin{description}
 \item [\textbf{(LC)}] There exists a constant $L\in\RR$ such that $Lh-f$ is convex on $\mathring  {\mathcal{X}}$.
\end{description}
For Euclidean geometry, this condition is obviously equivalent to the smoothness of the function $f$ with regards to the $\ell_2$-norm. Moreover, under an arbitrary norm $\Vert \cdot \Vert$, assuming $h$ $\mu$-strongly convex and $f$ $L$-smooth  clearly implies, by simple convex computation, \textbf{(LC)} with constant $L/\mu$. 
However \textbf{(LC)} is much more general and may hold even when $f$ is non-smooth what precisely justifies the introduction of this condition. Many examples are described by \citet{BauBolTeb15,LuFreNes16}. Furthermore this notion has the elegance of pairing well with Bregman divergences and leading to more refined proofs as shown in the following proposition which summarizes equivalent properties of \textbf{(LC)}. 
\begin{proposition}[\citet{BauBolTeb15}]\label{prop:lc}
Assume \textbf{(A1-4)}. For $L>0$ the following conditions are equivalent:
\begin{itemize}
 \item $Lh-f$ is convex on $\mathring{\mathcal X}$, i.e.,  \textbf{(LC)} holds,
 \item $D_f(\alpha,\beta)\leq L D_h(\alpha,\beta) $ for all $(\alpha,\beta)\in {\mathcal X}\times \mathring{\mathcal X}$.
\end{itemize}
Furthermore, when $f$ and $h$ are assumed twice differentiable, then the above is equivalent to 
\[
\nabla^2 f(\theta)  \preccurlyeq L \nabla^2 h(\theta)\quad \text{  for all }  \theta\in\mathring{\mathcal X}.
\]
\end{proposition}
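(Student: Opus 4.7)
The proposition is a collection of equivalent reformulations of the \textbf{(LC)} condition, and my plan is to prove them by essentially unwinding the definitions, since under \textbf{(A1-4)} both $f$ and $h$ are differentiable on $\mathring{\mathcal X}$ so the standard characterizations of convexity in terms of tangent supports apply.

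First, I would prove $(1)\Longleftrightarrow(2)$. For a differentiable convex function $\varphi$ on $\mathring{\mathcal X}$, convexity is equivalent to the tangent inequality $\varphi(\alpha)\geq \varphi(\beta)+\langle\nabla\varphi(\beta),\alpha-\beta\rangle$ for all $(\alpha,\beta)\in\mathcal X\times\mathring{\mathcal X}$. Applying this with $\varphi=Lh-f$, subtracting $Lh(\beta)-f(\beta)+\langle L\nabla h(\beta)-\nabla f(\beta),\alpha-\beta\rangle$ from both sides and recognizing the definition $D_\varphi(\alpha,\beta)=\varphi(\alpha)-\varphi(\beta)-\langle\nabla\varphi(\beta),\alpha-\beta\rangle$ gives
\[
L D_h(\alpha,\beta)-D_f(\alpha,\beta)=D_{Lh-f}(\alpha,\beta),
\]
which is non-negative for all $(\alpha,\beta)$ if and only if $Lh-f$ is convex (and the Bregman divergence is then the standard witness of convexity). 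This identity immediately yields both directions of $(1)\Longleftrightarrow(2)$.

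Next, assuming $f$ and $h$ are twice differentiable on $\mathring{\mathcal X}$, I would prove $(1)\Longleftrightarrow(3)$ by appealing to the standard second-order characterization of convexity: a $C^2$ function $\varphi$ is convex on the open convex set $\mathring{\mathcal X}$ if and only if $\nabla^2\varphi(\theta)\succcurlyeq 0$ for all $\theta\in\mathring{\mathcal X}$. Applied to $\varphi=Lh-f$, whose Hessian is $L\nabla^2 h(\theta)-\nabla^2 f(\theta)$, this directly rewrites as $\nabla^2 f(\theta)\preccurlyeq L\nabla^2 h(\theta)$.

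There is no real obstacle here: all three statements are standard equivalent reformulations, and the only care needed is to check the regularity assumptions \textbf{(A1)} and \textbf{(A3)}, which guarantee that the gradients and Bregman divergences used in $(1)\Leftrightarrow(2)$ are well defined on $\mathring{\mathcal X}$, and (for $(3)$) that the additional twice differentiability hypothesis made in the last statement lets us invoke the second-order criterion. The proof is thus essentially a chain of definitional rewrites, and one could present it in just a few lines.
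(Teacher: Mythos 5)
Your argument is correct, and there is nothing in the paper to compare it against: Proposition~\ref{prop:lc} is stated with an attribution to \citet{BauBolTeb15} and is not proved in the paper, so the intended ``proof'' is simply a citation. Your chain of definitional rewrites --- the linearity identity $D_{Lh-f}=LD_h-D_f$ combined with the first-order (gradient-inequality) characterization of convexity for the equivalence of the first two items, and the second-order characterization for the Hessian statement --- is exactly the standard argument appearing in the cited reference. The only point deserving one extra sentence is the passage from convexity of $Lh-f$ on the open set $\mathring{\mathcal X}$ to the inequality $D_{Lh-f}(\alpha,\beta)\geq 0$ for $\alpha$ on the boundary of $\mathcal X$, which requires a limiting argument along the segment $[\beta,\alpha]$ using the lower semicontinuity guaranteed by \textbf{(A1)} and \textbf{(A3)}; you gesture at this under ``checking the regularity assumptions,'' and making it explicit would complete the proof.
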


\subsection{Generalized Bregman divergence}\label{app:nonsmoothbreg}

The Bregman divergence  was defined by \citet{Bre67} for a differentiable convex function $h$ as
\begin{equation}
 D_h(\alpha,\beta)=h(\alpha)-h(\beta)-\langle \nabla h(\beta),\alpha-\beta), \text{ for }(\alpha,\beta)\in \dom h \times \mathring \dom  h.
\end{equation}
It behaves as a squared distance depending on the function $h$ and  extends the computational properties of the squared $\ell_{2}$-norm to non-Euclidean spaces. Indeed most proofs in Euclidean space rest upon the expansion $\Vert \theta_{n}-\theta_*-\gamma\nabla f(\theta_n)\Vert_{2}^{2}=\Vert \theta_{n}-\theta_*\Vert_{2}^{2}+\gamma^2\Vert \nabla f(\theta_n)\Vert_{2}^{2}-2 \gamma \langle \nabla f(\theta_n),\theta_{n}-\theta_*\rangle$ which is not available in non-Euclidean geometry. Therefore the Bregman divergence comes to rescue and is used to compute a deviation between the current iterate of the algorithm and the solution of the problem and, seemingly, used as an non-Euclidean Lyapunov function. 
It has been widely used in optimization \citep[see, e.g.,][for a review]{BauBor97}. 

We follow this path and include the regularization component $g$ of the objective function $\psi=f+g$ in the Bregman divergence for the sake of the analysis. If $g$ was differentiable we would simply use $D_{h+n\gamma g}$. Since $g$ is not differentiable, $D_{h_n}$ is not well defined. However for $(\alpha,\eta)\in \dom h \times \mathring \dom  h_n^*$, we denote by extension for $\theta=\nabla h_n^*(\eta)$:
\begin{equation}\label{eq:bregetend}
 \tilde D_n(\alpha,\eta)= h_n(\alpha)-h_n(\theta)-\langle \eta,\alpha-\theta\rangle.
\end{equation}
This extension is different from the one defined by \citet{Kiw97}.
It is worth noting that if there exists $\mu$ such that $\alpha=\nabla h_n^*(\mu)$, we recover the classical formula $\tilde D_n(\alpha,\eta)=D_{h_n^*}(\eta,\mu)$ which is well defined since $h_n^*$ is differentiable. Yet $\tilde D_n$ is defined more generally since such a $\mu$ does not always exist. The next lemma relates $\tilde D_{n}$ to $D_{h}$ and is obvious if $g$ is differentiable since $D_{h_n}=D_h+\gamma n D_g$.

\begin{lemma}\label{lem:Breg}
Let $n\geq0$, $\alpha\in \dom h$ and $\eta\in\mathring \dom h_n^*$, then with $\theta=\nabla h_n^*(\eta)$,
 \begin{equation}
  \tilde D_n(\alpha,\eta)\geq D_h(\alpha,\theta).
 \end{equation}
\end{lemma}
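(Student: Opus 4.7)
The plan is to expand both sides by definition and reduce the claim to a single subgradient inequality for $g$ at the point $\theta$.

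First I would unfold the two quantities. Using $h_n=h+n\gamma g$, one has
\[
\tilde D_n(\alpha,\eta)-D_h(\alpha,\theta)
= n\gamma\bigl(g(\alpha)-g(\theta)\bigr)-\langle \eta-\nabla h(\theta),\,\alpha-\theta\rangle,
\]
where $\nabla h(\theta)$ is well defined because $\theta=\nabla h_n^*(\eta)\in\mathring{\dom}\,h_n\subset\mathring{\dom}\,h$, and $h$ is differentiable there by the Legendre assumption (A3). So the lemma reduces to showing
\[
n\gamma\bigl(g(\alpha)-g(\theta)\bigr)\;\geq\;\langle \eta-\nabla h(\theta),\,\alpha-\theta\rangle.
\]

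Next, I would exploit the optimality condition characterizing $\theta=\nabla h_n^*(\eta)$. By Fermat's rule applied to the strictly convex problem $\theta=\argmin_{u}\{h_n(u)-\langle\eta,u\rangle\}$, we have $\eta\in\partial h_n(\theta)$. Since $h$ is differentiable at $\theta$ and (A3) guarantees $\mathring{\dom}\,h\cap\dom g\neq\emptyset$ (the standard constraint qualification for the sum rule), the subdifferential decomposes as $\partial h_n(\theta)=\nabla h(\theta)+n\gamma\,\partial g(\theta)$. Hence there exists $s\in\partial g(\theta)$ with $\eta-\nabla h(\theta)=n\gamma s$.

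Finally, the subgradient inequality for $g$ at $\theta$ gives $g(\alpha)-g(\theta)\geq\langle s,\alpha-\theta\rangle$. Multiplying by $n\gamma\geq 0$ and substituting $n\gamma s=\eta-\nabla h(\theta)$ yields exactly the inequality displayed above, which proves the lemma.

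The only real subtlety, and the step I expect to be the main obstacle, is justifying the subdifferential sum rule: we need $\theta\in\mathring{\dom}\,h$ so that $\partial h(\theta)=\{\nabla h(\theta)\}$, and we need the qualification condition so that $\partial(h+n\gamma g)(\theta)=\partial h(\theta)+n\gamma\,\partial g(\theta)$. Both follow from the Legendre structure of $h$ (A3) together with the assumption that the iterate $\theta=\nabla h_n^*(\eta)$ is unambiguously defined, as discussed in Appendix~\ref{app:dadefined}. Once that is granted, the rest is a one-line algebraic manipulation plus the definition of a subgradient.
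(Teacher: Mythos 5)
Your proposal is correct and follows essentially the same route as the paper: both expand $\tilde D_n(\alpha,\eta)-D_h(\alpha,\theta)$ to $n\gamma(g(\alpha)-g(\theta))-\langle\eta-\nabla h(\theta),\alpha-\theta\rangle$, use $\eta\in\partial h_n(\theta)$ with the sum rule $\partial h_n(\theta)=\nabla h(\theta)+n\gamma\,\partial g(\theta)$, and conclude by the subgradient inequality for $g$. Your extra care in justifying the sum rule via the qualification condition in \textbf{(A3)} is a welcome elaboration of what the paper dismisses as an ``elementary calculus rule,'' but it is not a different argument.
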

\begin{proof}
$\theta=\nabla h_{n}^*(\eta)$, thus $\eta\in \partial h_n(\theta)$ and by elementary calculus rule  $\partial h_n(\theta)=\nabla h (\theta)+n\gamma \partial g(\theta)$. Consequently $\eta-\nabla h (\theta)\in n\gamma \partial g(\theta)$ and by convexity of $g$
\[
 \tilde D_{n}(\alpha,\eta)-D_h(\alpha,\theta)=n\gamma \bigg[g(\alpha)-g(\theta)-\bigg\langle \frac{\eta-\nabla h (\theta)}{\gamma n}, \alpha-\theta\bigg\rangle\bigg]\geq 0.
\]
\end{proof}
\subsection{Proof of Proposition~\ref{prop:phifunction}}
We assume their exists a constant $L>0$ such that $Lh-f$ is convex on $\mathring {\mathcal X}$ and we assume the step-size $\gamma\leq 1/L$.
We first show that the Bregman divergence decreases along the iterates \citep[see, e.g.,][]{Teb03,bachduality}. For all $\theta\in \mathcal X$,
 \BEAS
\tilde D_n(\theta,\eta_n)-\tilde D_{n-1}(\theta,\eta_{n-1})&=&h_{n-1}(\theta_{n-1})-h_n(\theta_{n})+h_{n}(\theta)-h_{n-1}(\theta)\\
&&-\langle \eta_n,\theta-\theta_n \rangle+\langle \eta_{n-1} , \theta-\theta_{n-1}\rangle \\
&=&h_{n-1}(\theta_{n-1})-h_{n-1}(\theta_{n})-\gamma (g(\theta_n)-g(\theta))\\
&&+\langle \eta_{n-1},\theta_n-\theta_{n-1}\rangle+\langle \eta_n-\eta_{n-1},\theta_n-\theta \rangle\\
&=&-\tilde D_{n-1}(\theta_n,\eta_{n-1})-\gamma (g(\theta_n)-g(\theta))-\gamma\langle \nabla f(\theta_{n-1}),\theta_n-\theta\rangle.
\EEAS
Therefore  for all $\theta\in \mathcal X$,
\begin{multline}\label{eq:Breglya}
 \tilde D_n(\theta,\eta_n)-\tilde D_{n-1}(\theta,\eta_{n-1})=-\tilde D_{n-1}(\theta_n,\eta_{n-1}) +\gamma \langle \nabla f(\theta_{n-1}), \theta_{n-1}-\theta_n \rangle
 \\
- \gamma (g(\theta_n)-g(\theta)) -\gamma \langle \nabla f(\theta_{n-1}), \theta_{n-1}-\theta \rangle.
\end{multline}
It follows from Proposition~\ref{prop:lc} and Lemma~\ref{lem:Breg}
\[
f(\theta_n)-f(\theta_{n-1})+\langle \nabla f(\theta_{n-1}),\theta_n-\theta_{n-1}\rangle\leq L D_h(\theta_n,\theta_{n-1})\leq  L D_{n-1}(\theta_n,\theta_{n-1}),
\]
and from the convexity of $f$, 
\[
- \langle \nabla f(\theta_{n-1}), \theta_{n-1}-\theta \rangle\leq f(\theta)-f(\theta_{n-1}).
\]
And \eq{Breglya} is bounded by
\[
 \tilde D_n(\theta,\eta_n)-\tilde D_{n-1}(\theta,\eta_{n-1})\leq\gamma ( \psi(\theta)- \psi(\theta_n))+(\gamma L-1) D_h(\theta_n,\theta_{n-1}).
\]
Thus for $\gamma\leq 1/L$, 
\[
 \tilde D_n(\theta,\eta_n)-\tilde D_{n-1}(\theta,\eta_{n-1})\leq\gamma ( \psi(\theta)- \psi(\theta_n)).
\]
Taking $\theta=\theta_{n-1}$  we note that the sequence $\{\psi(\theta_n)\}_{n\geq 0}$ is decreasing and we obtain for $\gamma\leq 1/L$,
\begin{equation}
  \psi(\theta_n)-\psi(\theta)\leq \frac{1}{n+1}\sum_{k=0}^n [\psi(\theta_i)-\psi(\theta)]\leq \frac{D_h(\theta,\theta_0)-\tilde D_{n}(\theta,\eta_{n})}{\gamma (n+1)}.
\end{equation}

We assume now that the non-smooth part $g=0$ and there exists $\mu\geq0$ such that $ f-\mu h$  is convex.  So Proposition~\ref{prop:lc}  implies
\[
- \langle \nabla f(\theta_{n-1}), \theta_{n-1}-\theta \rangle\leq f(\theta)-f(\theta_{n-1})-\mu D_h(\theta,\theta_{n-1}),
\]  
which gives with \eq{Breglya} the better bound
\[
  D_h(\theta,\theta_n)-D_h(\theta,\theta_{n-1}) \leq \gamma ( f(\theta)- f(\theta_n))-\gamma \mu D_h(\theta,\theta_{n-1}) +(\gamma L-1) D_h(\theta_n,\theta_{n-1}).
\]
And  for $\gamma\leq 1/L$, this can be simplified as
\[
 D_h(\theta,\theta_n) \leq (1-\gamma \mu )D_h(\theta,\theta_{n-1}) +\gamma ( f(\theta)- f(\theta_n)).
 \]
 The sequence $\{f(\theta_n)\}_{n\geq 0}$ is still decreasing and we obtain by  expanding the recursion
\BEAS
 D_h(\theta,\theta_n)&\leq& (1-\gamma \mu)^nD_h(\theta,\theta_0) +\sum_{k=1}^n (1-\gamma \mu) ^{n-k} \gamma (f(\theta)-f(\theta_k))\\
&\leq& (1-\gamma \mu)^n D_h(\theta,\theta_0) +
\sum_{k=1}^n (1-\gamma \mu) ^{n-k} \gamma (f(\theta)-f(\theta_k))\\
&\leq&(1-\gamma \mu)^nD_h(\theta,\theta_0) +
\sum_{k=1}^n (1-\gamma \mu) ^{n-k} \gamma (f(\theta)-f(\theta_n))\\
&\leq&(1-\gamma \mu)^nD_h(\theta,\theta_0) +
 \gamma \frac{1-(1-\gamma \mu)^n}{\gamma \mu} (f(\theta)-f(\theta_n)).
\EEAS
Thus for all $\theta\in \mathcal X$,
\[
  \frac{1-(1-\gamma \mu)^n}{ \mu} (f(\theta_n)-f(\theta))+ D_h(\theta,\theta_n)\leq (1-\gamma\mu)^nD_h(\theta,\theta_0),
\]
and 
\[
f(\theta_n)-f(\theta)\leq   \frac{ \gamma \mu (1-\gamma\mu)^n}{1-(1-\gamma \mu)^n} \frac	{D_h(\theta,\theta_0) }{\gamma}\leq (1-\gamma\mu)^n\frac{D_h(\theta,\theta_0) }{\gamma},
\]
since $(1-\gamma \mu)^2\leq 1-\gamma \mu$ implies $\gamma \mu/(1-(1-\gamma \mu)^n)\leq1$.

\section{Proof of Proposition~\ref{prop:theta}}\label{app:semisto}

In this section, we will prove Proposition~\ref{prop:theta}. The proof relies on considering the difference between the iteration with noise  we denote by $(\eta_n,\theta_n)$ and without noise we denote by $(\omega_n,\phi_n)$,  which happens to verify a similar recursion as the SDA recursion.
\begin{itemize}
 \item We first show in Lemma~\ref{lemma:etasemisto} that the distance  $\EE \Vert \eta_n-\omega_n\Vert_{\Sigma^{-1}}^2$ is of order $n$. 
 \item Then in Lemma~\ref{lemma:alphabeta} we show that $\EE \Vert \bar \theta_n-\bar \phi_n\Vert_\Sigma^2$ is of order $O(1/n)$, by: (a) noticing that $\EE \Vert\bar \theta_n-\bar \phi_n\Vert_\Sigma^2$ is of order $\frac{\EE \Vert \eta_n-\omega_n\Vert_{\Sigma^{-1}}^2}{n^2}+\frac{\text{variance}}{n}$, (b) combining this with the result of Lemma~\ref{lemma:etasemisto}.
\end{itemize}

\subsection{Two technical lemmas}

We first present and prove two technical lemmas.

\subsubsection{Bound on the difference of two dual iterates }
In the following lemma we show that the difference between two dual iterates  that follow the same recursion is of order $n$. This will be used with the iteration with noise $(\eta_n,\theta_n)$ and without noise $(\omega_n,\phi_n)$. 
\begin{lemma}\label{lemma:etasemisto}
Let us consider two sequences of iterates $(\mu_k,\alpha_k)$ and $(\nu_k,\beta_k)$ which satisfy the recursion  $\mu_n-\nu_n=\mu_{n-1}-\nu_{n-1}-\gamma \Sigma (\alpha_{n-1}-\beta_{n-1}) +\gamma\xi_n$, $\alpha_n=\nabla h_n^*(\mu_n)$ and $\beta_n=\nabla h_n^*(\nu_n)$  and assume that $\gamma$ is such that $2h-\gamma f $ is convex then for all $n\geq0$
\[
 \EE \Vert \mu_n-\nu_n\Vert_{\Sigma^{-1}}^2\leq  \Vert \mu_0-\nu_0 \Vert_{\Sigma^{-1}}^2   +n\gamma^2 \Tr \Sigma^{-1} C. 
\]
\end{lemma}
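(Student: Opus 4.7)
The plan is to control the $\Sigma^{-1}$-norm of the deviation $\delta_n := \mu_n - \nu_n$ by expanding one step of the recursion, using the martingale property of $\xi_n$ to handle the noise, and exploiting the $(2h-\gamma f)$-convexity through an operator-monotonicity argument to kill the remaining deterministic cross term.

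First I would write $\delta_n = \delta_{n-1} - \gamma \Sigma(\alpha_{n-1}-\beta_{n-1}) + \gamma \xi_n$ and expand $\|\delta_n\|_{\Sigma^{-1}}^2$. Using $\|\Sigma u\|_{\Sigma^{-1}}^2 = \|u\|_\Sigma^2$, and taking conditional expectation given $\mathcal{F}_{n-1}$, the linear terms in $\xi_n$ vanish (since $\delta_{n-1}, \alpha_{n-1}, \beta_{n-1}$ are $\mathcal{F}_{n-1}$-measurable and $\EE[\xi_n|\mathcal{F}_{n-1}]=0$), and $\EE[\|\xi_n\|_{\Sigma^{-1}}^2|\mathcal{F}_{n-1}] = \Tr \Sigma^{-1}\EE[\xi_n\otimes\xi_n|\mathcal{F}_{n-1}] \leq \Tr\Sigma^{-1}C$. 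This yields
\[
\EE[\|\delta_n\|_{\Sigma^{-1}}^2 \mid \mathcal{F}_{n-1}] \leq \|\delta_{n-1}\|_{\Sigma^{-1}}^2 - 2\gamma\langle \delta_{n-1}, \alpha_{n-1}-\beta_{n-1}\rangle + \gamma^2 \|\alpha_{n-1}-\beta_{n-1}\|_\Sigma^2 + \gamma^2 \Tr\Sigma^{-1}C.
\]
It thus suffices to show $\gamma\|\alpha_{n-1}-\beta_{n-1}\|_\Sigma^2 \leq 2\langle \delta_{n-1}, \alpha_{n-1}-\beta_{n-1}\rangle$ to then iterate and conclude.

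To lower-bound the inner product, I would use that $\alpha_{n-1} = \nabla h_{n-1}^*(\mu_{n-1})$ implies $\mu_{n-1} \in \partial h_{n-1}(\alpha_{n-1}) = \nabla h(\alpha_{n-1}) + (n-1)\gamma\,\partial g(\alpha_{n-1})$, so $\mu_{n-1} = \nabla h(\alpha_{n-1}) + (n-1)\gamma\,s^\alpha$ for some $s^\alpha \in \partial g(\alpha_{n-1})$, and analogously for $\nu_{n-1}$. Subtracting and pairing with $\alpha_{n-1}-\beta_{n-1}$, monotonicity of $\partial g$ discards the regularization terms:
\[
\langle \delta_{n-1}, \alpha_{n-1}-\beta_{n-1}\rangle \geq \langle \nabla h(\alpha_{n-1}) - \nabla h(\beta_{n-1}), \alpha_{n-1}-\beta_{n-1}\rangle = D_h(\alpha_{n-1},\beta_{n-1}) + D_h(\beta_{n-1},\alpha_{n-1}).
\]

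To upper-bound the quadratic term, I would apply Proposition~\ref{prop:lc} to the hypothesis that $2h-\gamma f$ is convex (i.e., $\tfrac{2}{\gamma}h - f$ is convex): this gives $D_f(\alpha,\beta) \leq \tfrac{2}{\gamma} D_h(\alpha,\beta)$. Since $f$ is the quadratic in \textbf{(A5)}, $D_f(\alpha,\beta) = \tfrac{1}{2}\|\alpha-\beta\|_\Sigma^2$, so symmetrizing,
\[
\gamma\|\alpha_{n-1}-\beta_{n-1}\|_\Sigma^2 = \gamma[D_f(\alpha_{n-1},\beta_{n-1}) + D_f(\beta_{n-1},\alpha_{n-1})] \leq 2[D_h(\alpha_{n-1},\beta_{n-1}) + D_h(\beta_{n-1},\alpha_{n-1})],
\]
which matches the lower bound on the inner product. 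The two deterministic terms cancel, leaving $\EE[\|\delta_n\|_{\Sigma^{-1}}^2 \mid \mathcal{F}_{n-1}] \leq \|\delta_{n-1}\|_{\Sigma^{-1}}^2 + \gamma^2\Tr\Sigma^{-1}C$. Taking total expectation and summing from $1$ to $n$ yields the claimed bound.

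The main obstacle is step three, namely correctly extracting the subgradient structure of $\partial h_n = \nabla h + n\gamma\partial g$ despite the non-smoothness of $g$, and matching constants so that the $(2h-\gamma f)$-convexity hypothesis (rather than $(h-\gamma f)$-convexity, as in Proposition~\ref{prop:phifunction}) gives exactly a factor $2$ that cancels the symmetric sum of Bregman divergences; this is what forces the slightly more restrictive step-size condition in this lemma compared to the deterministic result.
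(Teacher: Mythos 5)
Your proof is correct and follows essentially the same route as the paper: expand the square, take conditional expectations so the noise contributes $\gamma^2\Tr\Sigma^{-1}C$, and kill the deterministic cross term $\gamma\Vert\alpha-\beta\Vert_\Sigma^2-2\langle\mu-\nu,\alpha-\beta\rangle$ using the subgradient decomposition $\mu\in\nabla h(\alpha)+n\gamma\,\partial g(\alpha)$ together with monotonicity of $\partial g$ and convexity of $2h-\gamma f$. Your phrasing of the last step via Proposition~\ref{prop:lc} and symmetrized Bregman divergences is just the identity $\langle\nabla\phi(\alpha)-\nabla\phi(\beta),\alpha-\beta\rangle=D_\phi(\alpha,\beta)+D_\phi(\beta,\alpha)$ applied to $\phi=2h-\gamma f$, which is exactly the monotonicity argument the paper uses.
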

\begin{proof}
We first expand the square.
\BEAS
\Vert \mu_{n+1}-\nu_{n+1}\Vert_{\Sigma^{-1}}^2
&=&
\Vert \mu_{n}-\nu_{n}\Vert_{\Sigma^{-1}}^2 
+ \gamma^2 \Vert[ \Sigma  (\alpha_{n}-\beta_{n})-\xi_{n+1}] \Vert_{\Sigma^{-1}}^2 \\
&&
-2 \gamma \langle  \Sigma  (\alpha_{n}-\beta_{n})-\xi_{n+1} ,  \Sigma^{-1}(\mu_n-\nu_n)\rangle\\
&=&  
\Vert \mu_{n}-\nu_{n}\Vert_{\Sigma^{-1}}^2
+ \gamma^2 \Vert \alpha_{n}-\beta_{n} \Vert_\Sigma^2
+ \gamma^2 \Vert \xi_{n+1} \Vert_{\Sigma^{-1}}^2\\
&&
-2\gamma^2\langle  \alpha_{n}-\beta_{n},\xi_{n+1} \rangle 
-2 \gamma \langle   \alpha_{n}-\beta_{n}-\Sigma^{-1}\xi_{n+1} ,  \mu_n-\nu_n\rangle.
\EEAS
And taking the expectation 
\BEAS
 \EE[\Vert \mu_{n+1}-\nu_{n+1}\Vert_{\Sigma^{-1}}^2\vert \mathcal{F}_n]
 &=&
 \Vert \mu_{n}-\nu_{n}\Vert_{\Sigma^{-1}}^2 
 +\gamma^2 \EE [ \Vert \xi_{n+1} \Vert_{\Sigma^{-1}}^2\vert \mathcal{F}_n]\\
 &&
 + \gamma^2 \Vert \alpha_{n}-\beta_{n} \Vert_\Sigma^2
 -2\gamma^2\EE [\langle  \alpha_{n}-\beta_{n},\xi_{n+1} \rangle \vert \mathcal{F}_n]  \\
 &&
 -2 \gamma \EE [\langle   \alpha_{n}-\beta_{n})-\Sigma^{-1}\xi_{n+1} ,  \mu_n-\nu_n\rangle\vert \mathcal{F}_n] \\
 &=&
 \Vert \mu_{n}-\nu_{n}\Vert_{\Sigma^{-1}}^2 
 +\gamma^2  \tr \Sigma^{-1} \EE [\xi_{n+1} \otimes\xi_{n+1} \vert \mathcal{F}_n]\\
 &&
  + \gamma^2 \Vert \alpha_{n}-\beta_{n} \Vert_\Sigma^2
 -2\gamma^2\langle  \alpha_{n}-\beta_{n},\EE [\xi_{n+1}\vert \mathcal{F}_n] \rangle   \\
 &&
 -2 \gamma \langle   \alpha_{n}-\beta_{n})-\Sigma^{-1}\EE [\xi_{n+1}\vert \mathcal{F}_n] ,  \mu_n-\nu_n\rangle \\
  &=&
 \Vert \mu_{n}-\nu_{n}\Vert_{\Sigma^{-1}}^2 
   + \gamma^2  \tr \Sigma^{-1} C \\
 &&  + \gamma^2 \Vert \alpha_{n}-\beta_{n} \Vert_\Sigma^2
 -2 \gamma \langle   \alpha_{n}-\beta_{n} ,\mu_n-\nu_n\rangle.
\EEAS
Moreover, using the definition of $\alpha_n$ and $\beta_n$,
\BEAS
 \gamma \Vert \alpha_{n}-\beta_{n} \Vert_\Sigma^2
 -2 \langle   \alpha_{n}-\beta_{n} ,\mu_n-\nu_n\rangle
&=&
\langle \gamma \Sigma (\alpha_{n}-\beta_{n}) - 2(\mu_n-\nu_n), \alpha_{n}-\beta_{n}\rangle\\
&=&
\langle \gamma \nabla f(\alpha_n)-\nabla f(\beta_n)-2(\nabla h(\alpha_n)-\nabla h(\beta_n)), \alpha_{n}-\beta_{n}\rangle\\ 
&&- 2\langle (\mu_n-\nabla h(\alpha_n))-(\nu_n-\nabla h(\beta_n)), \alpha_{n}-\beta_{n}\rangle\\
&=& 
\langle \nabla(\gamma f-2h)(\alpha_{n})-\nabla(\gamma f-2h)(\beta_{n}), \alpha_{n}-\beta_{n}\rangle\\
&&- 2\langle (\mu_n-\nabla h(\alpha_n))-(\nu_n-\nabla h(\beta_n)), \alpha_{n}-\beta_{n}\rangle.
\EEAS
Using the $h$-smoothness of $f$ and assuming that $\gamma$ is such $2h-\gamma f$ is convex, 
\[
\langle \nabla(\gamma f-2h)(\alpha_{n})-\nabla(\gamma f-2h)(\beta_{n}), \alpha_{n}-\beta_{n}\rangle\leq 0, \]
and as explained in the proof of Lemma~\ref{lem:Breg}, $\mu_n-\nabla h(\alpha_n)\in\partial n\gamma g (\alpha_n)$ and $\nu_n-\nabla h(\beta_n)\in\partial n\gamma g (\beta_n)$ and consequently 
\[
\langle (\mu_n-\nabla h(\alpha_n))-(\nu_n-\nabla h(\beta_n)), \alpha_{n}-\beta_{n}\rangle\leq0,
\]
 by convexity of $g$.
This explains that 
\[
  \gamma \Vert \alpha_{n}-\beta_{n} \Vert_\Sigma^2
 -2 \langle   \alpha_{n}-\beta_{n} ,\mu_n-\nu_n\rangle\leq0.
\]
Then, taking the global expectation,  we have shown that 
\[
\EE \Vert \mu_{n+1}-\nu_{n+1}\Vert_{\Sigma^{-1}}^2
\leq 
\EE\Vert \mu_{n}-\nu_{n}\Vert_{\Sigma^{-1}}^2
+\gamma^2\tr \Sigma^{-1} C,
\]
which concludes the proof.
\end{proof}

\subsubsection{Bound on the difference of the average of two primal iterates}
In the following lemma we adapt the classic proof of averaged SGD by \citet{pj} to show that the difference between  two  averaged primal iterates, which follow the same recursion, is of order $O(1/n)$.
\begin{lemma}\label{lemma:alphabeta}
Let us consider two sequences of iterates $(\mu_k,\alpha_k)$ and $(\nu_k,\beta_k)$ which satisfy the recursion  $\mu_n-\nu_n=\mu_{n-1}-\nu_{n-1}-\gamma \Sigma (\alpha_{n-1}-\beta_{n-1}) +\gamma\xi_n$, $\alpha_n=\nabla h_n^*(\mu_n)$ and $\beta_n=\nabla h_n^*(\nu_n)$  and assume that $\gamma$ is such that $2h-\gamma f $ is convex then for all $n\geq0$
 \[
{\EE \Vert  \bar \alpha_n - \bar\beta_n\Vert_{\Sigma}^2}
\leq 
4 \frac{ \Vert \mu_0-\nu_0\Vert^2_{\Sigma^{-1}}}{(\gamma n)^2}  
+ {\frac{4}{n}\Tr \Sigma^{-1} C }.
\]
\end{lemma}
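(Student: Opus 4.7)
The plan is to exploit the quadratic structure of $f$ (which makes $\nabla f$ linear) together with the Polyak--Juditsky-style telescoping observation that the cumulative sum of gradients equals a difference of dual iterates. First, I would iterate the recursion $\mu_k - \nu_k = \mu_{k-1} - \nu_{k-1} - \gamma \Sigma(\alpha_{k-1} - \beta_{k-1}) + \gamma \xi_k$ from $k=1$ to $n$, giving
\[
\mu_n - \nu_n = (\mu_0 - \nu_0) - \gamma \sum_{k=0}^{n-1} \Sigma(\alpha_k - \beta_k) + \gamma \sum_{k=1}^n \xi_k.
\]
Because $\nabla f$ is linear, $\Sigma(\bar\alpha_n - \bar\beta_n) = \frac{1}{n}\sum_{k=0}^{n-1}\Sigma(\alpha_k - \beta_k)$, so solving for the average yields the key identity
\[
\Sigma(\bar\alpha_n - \bar\beta_n) = \frac{(\mu_0 - \nu_0) - (\mu_n - \nu_n)}{\gamma n} + \frac{1}{n}\sum_{k=1}^n \xi_k.
\]

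Second, since $\Sigma$ is invertible I would rewrite $\|\bar\alpha_n - \bar\beta_n\|_\Sigma^2 = \|\Sigma(\bar\alpha_n - \bar\beta_n)\|_{\Sigma^{-1}}^2$ and apply $\|a+b\|_{\Sigma^{-1}}^2 \le 2\|a\|_{\Sigma^{-1}}^2 + 2\|b\|_{\Sigma^{-1}}^2$ to split the bound into an initial-condition contribution and a pure-noise contribution. The noise term is easy: using the martingale-difference property of $(\xi_k)$ together with $\EE[\xi_k \otimes \xi_k] \preccurlyeq C$,
\[
\EE \Bigl\| \tfrac{1}{n}\sum_{k=1}^n \xi_k \Bigr\|_{\Sigma^{-1}}^2 = \frac{1}{n^2}\sum_{k=1}^n \Tr\bigl(\Sigma^{-1} \EE[\xi_k\otimes\xi_k]\bigr) \le \frac{1}{n}\Tr \Sigma^{-1} C.
\]

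Third, for the initial-condition term I would expand $\|(\mu_0 - \nu_0) - (\mu_n - \nu_n)\|_{\Sigma^{-1}}^2 \le 2 \|\mu_0 - \nu_0\|_{\Sigma^{-1}}^2 + 2\EE\|\mu_n - \nu_n\|_{\Sigma^{-1}}^2$ and invoke Lemma~\ref{lemma:etasemisto} to control $\EE\|\mu_n - \nu_n\|_{\Sigma^{-1}}^2$ by $\|\mu_0 - \nu_0\|_{\Sigma^{-1}}^2 + n\gamma^2 \Tr \Sigma^{-1} C$. Dividing by $(\gamma n)^2$ produces both a $\|\mu_0 - \nu_0\|_{\Sigma^{-1}}^2/(\gamma n)^2$ piece and an additional $O(1/n)$ variance piece. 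Collecting everything yields a bound of the stated form $c_1 \|\mu_0 - \nu_0\|_{\Sigma^{-1}}^2/(\gamma n)^2 + c_2 \Tr \Sigma^{-1} C /n$ for explicit absolute constants.

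The main obstacle is not conceptual but ensures the noise-propagation inequality of Lemma~\ref{lemma:etasemisto} is available: without the convexity condition $2h - \gamma f$ convex there, the cross-term between successive dual differences would not cancel and one could not show that $\EE\|\mu_n - \nu_n\|_{\Sigma^{-1}}^2$ grows only linearly in $n$. Assuming that lemma in hand, the rest is a one-line telescoping identity exploiting linearity of $\nabla f$, plus a Young-type split. The quadratic structure is essential precisely because it makes the pass from $\overline{\nabla f(\theta_k)}$ to $\nabla f(\bar\theta_n)$ trivial, so the sum of gradients concentrates via the dual telescoping rather than through any curvature or smoothness argument on $f$.
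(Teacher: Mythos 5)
Your proposal is correct and follows essentially the same route as the paper: telescope the dual recursion to write $\Sigma(\bar\alpha_n-\bar\beta_n)$ as $\frac{(\mu_0-\nu_0)-(\mu_n-\nu_n)}{\gamma n}$ plus the averaged noise, split with a Young inequality, bound the noise term by martingale orthogonality, and control $\EE\Vert\mu_n-\nu_n\Vert^2_{\Sigma^{-1}}$ via Lemma~\ref{lemma:etasemisto}. The only (cosmetic) difference is the grouping in the Young split: yours yields constants $8$ and $6$ rather than the stated $4$ and $4$, which the paper recovers by keeping $\mu_0-\nu_0$ together with the martingale sum (their cross-terms vanish in expectation) and isolating only $\mu_n-\nu_n$ in the second square.
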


\begin{proof}
Let us consider two sequences of iterates $(\mu_k,\alpha_k)$ and $(\nu_k,\beta_k)$ which satisfy the recursion  $\mu_n-\nu_n=\mu_{n-1}-\nu_{n-1}-\gamma \Sigma (\alpha_{n-1}-\beta_{n-1}) +\gamma\xi_n$, $\alpha_n=\nabla h_n^*(\mu_n)$ and $\beta_n=\nabla h_n^*(\nu_n)$. This can be written as
\[
\Sigma(\alpha_n-\beta_n)
=\frac{\mu_n-\nu_n-\mu_{n+1}+\nu_{n+1}}{\gamma}+\xi_{n+1}.
\]
Thus we obtain
\BEAS
\Sigma^{1/2} \sum_{i=0}^{n-1}(\alpha_i-\beta_i)
&=&
\frac{\Sigma^{-1/2} (\mu_0-\nu_0-\mu_{n}+\nu_n)}{\gamma}
+\sum_{i=0}^{n-1}\Sigma^{-1/2} \xi_{i+1}.
\EEAS
Finally, using that by convexity $(a+b)^2\leq2(a^2+b^2)$, this leads to
\[
{\EE\Vert \bar \alpha_n- \bar \beta_n  \Vert_\Sigma^2}
\leq
2 \EE \Big\Vert \frac{\Sigma^{-1/2} (\mu_0-\nu_0)}{\gamma}+\sum_{i=0}^{n-1}\Sigma^{-1/2} \xi_{i+1}\Big\Vert_2^2+2\EE\Big\Vert \frac{\Sigma^{-1/2} (\mu_{n}-\nu_n)}{\gamma}  \Big\Vert_2^2.
\]
Using martingale second moment expansions, we obtain
\[
{\EE\Vert \bar \alpha_n- \bar \beta_n  \Vert_\Sigma^2}\leq 
   2\EE \frac{\Vert\mu_0-\nu_0\Vert^2_{\Sigma^{-1}}}{(\gamma n)^2}
    +2\frac{{\EE \Vert \mu_{n}-\nu_n\Vert^2_{\Sigma^{-1}}}}{(\gamma n)^2}  
  +\frac{2}{n^2}{\sum_{i=0}^{n-1}\Tr \Sigma^{-1}\EE( \xi_{i+1}\otimes \xi_{i+1})}.
\]
We compute
$
\sum_{i=0}^{n-1}\Tr \Sigma^{-1}\EE( \xi_{i+1}\otimes \xi_{i+1}) = \sum_{i=0}^{n-1}\Tr \Sigma^{-1} C = n \Tr \Sigma^{-1} C 
$
and, using Lemma~\ref{lemma:etasemisto}, we bound  $\EE \Vert \mu_{n}-\nu_n\Vert^2_{\Sigma^{-1}}$ as
\[
\frac{{\EE \Vert \mu_{n}-\nu_n\Vert^2_{\Sigma^{-1}}}}{(\gamma n)^2}
\leq
\frac{\Vert \mu_{0}-\nu_0\Vert^2_{\Sigma^{-1}}}{(\gamma n)^2} 
+ {\frac{1}{n}\Tr \Sigma^{-1} C }.
\]
This implies the final bound 
\[
{\EE\Vert \bar \alpha_n- \bar \beta_n  \Vert_\Sigma^2}
\leq
4 \frac{ \Vert \mu_{0}-\nu_0\Vert^2_{\Sigma^{-1}}}{(\gamma n)^2}   
+ {\frac{4}{n}\Tr \Sigma^{-1} C }.
\]
\end{proof}

\subsection{Application of Lemma~\ref{lemma:alphabeta} to prove Proposition~\ref{prop:theta}}

First of all we define the sequence 
\BEQ\label{eq:etastar}
\eta_n^*=\nabla h (\theta_*)-n\gamma \nabla f(\theta_*).
\EEQ
By definition of $\theta_*$, $-\nabla f(\theta_*)\in\partial g(\theta_*)$ then $\eta_n^*\in\partial (h+n\gamma g) \theta_*$ and $ \theta_*=\nabla h_n^*(\eta_n^*)$. Therefore the sequence $\eta_n^*$ is obtained by iterating DA started from the solution of the problem $\theta_*$. 

We note then than Lemma~\ref{lemma:alphabeta} applied to $(\mu_n=\eta_n,\alpha_n=\theta_n)$ and $(\nu_n=\eta_n^*,\beta_n=\theta_*)$ gives the first bound of Proposition~\ref{prop:theta}.

On the other hand, when  considering the noiseless iterates $(\omega_n,\phi_{n})$ defined by  $\omega_n=\omega_{n-1}-\gamma \Sigma (\phi_{n-1}-\tnc) $ and $\phi_n=\nabla h_n^*(\omega_n)$, started from the same point $\phi_0=\theta_0$, we obtain, following Proposition~\ref{prop:phifunction}, for gamma such that $h-\gamma f$ is convex, the bound 
\[
\frac{1}{2}\Vert  \bar \phi_n-\theta_*\Vert_{\Sigma}^{2}
\leq
\psi(\bar \phi_n)- \psi( \theta_*)
\leq   \frac{D_h(\theta_{*},\theta_{0})}{\gamma n}.
\]
Therefore, considering the difference between the semi-stochastic and the noiseless iterate   $(\eta_{n}-\omega_{n})$ which verifies the same equation $\eta_{n}-\omega_{n}=\eta_{n-1}-\omega_{n-1}-\gamma \Sigma (\theta_{n-1}-\phi_{n-1}) +\gamma\xi_n$ with $\theta_0-\phi_0=0$ as initial value, we may apply Lemma~\ref{lemma:alphabeta} to show 
 \[
{\EE\Vert\bar \theta_n- \bar \phi_n  \Vert_\Sigma^2}\leq {\frac{4}{n}\Tr \Sigma^{-1} C }.
\]
And by the  Cauchy-Schwarz inequality 
\BEAS
 {\EE\Vert\bar \theta_n- \theta_*  \Vert_\Sigma^2}
  &\leq&
 2{\EE\Vert\bar \theta_n- \bar \phi_n  \Vert_\Sigma^2}
 +2{\EE\Vert \bar \phi_n  -\theta_* \Vert_\Sigma^2}\\
&\leq&
{\frac{8}{n}\Tr \Sigma^{-1} C}
+4{\frac{D_h(\theta_{*},\theta_{0})}{\gamma n}},
\EEAS
which proves the second bound of Proposition~\ref{prop:theta}.

It is worth noting that the condition on the step-size of Lemma~\ref{lemma:etasemisto} is less restrictive than in Proposition~\ref{prop:theta}. Indeed for all $\gamma$ such that $2h-\gamma f$ is convex, the difference between the dual iterates of the stochastic and deterministic recursions stay close but the deterministic iterates only converge to the solution for $\gamma$ such that $h-\gamma f$ is convex.

\section{Proof of Proposition~\ref{prop:thetasto}}\label{app:proofsto}

In this section, we prove Proposition~\ref{prop:thetasto}. The proof technique is similar to Proposition~\ref{prop:theta} but with the additional difficulty of the multiplicative noise. 

We first note that Assumption \textbf{(A11)} is equivalent by the  Cauchy-Schwarz inequality to 
\BEQ \label{eq:kurtocobain}
\EE \langle x_n,Mx_n\rangle \langle x_n,Nx_n\rangle\leq \kappa \tr (M\Sigma)\tr(N\Sigma),
\EEQ for all positive semi-definite symmetric matrices $M$ and $N$ \citep[see, e.,g., proof in][]{DieFlaBac16}. We will often use, in the following demonstrations, \eq{kurtocobain} and its direct corollary
\BEQ
\langle x_n,Mx_n\rangle  x_n\otimes x_n\preccurlyeq \kappa \tr (M\Sigma)\Sigma,
\EEQ 
without always referring to it. 

\subsection{A simple proof for the bounded constrained case}\label{app:stobounded}

We first prove Proposition~\ref{prop:thetasto} for the constrained case. It is then a simple corollary of Proposition~\ref{prop:theta}.

Let us denote by  $\mathcal{C}$ a  bounded convex set and consider the constrained problem ($g=\mathbf{1}_{\mathcal{C}}$). We remind that the general stochastic oracle for SDA in least-squares regression is 
\[
\nabla f_n(\theta)= (\Sigma+\zeta_n)(\theta-\tnc)-\xi_n, \text{ for } \theta\in{\RR^d},
\]
with $\zeta_n=x_n\otimes x_n-\Sigma$. We denote by $r=\max_{\theta\in\mathcal C}\Vert \theta-\tnc\Vert_2$ and we show that the noise covariance is directly bounded, despite the multiplicative noise: 
\[
\EE \Big[\big(\nabla f_n(\theta)-\nabla f(\theta)\big)\otimes \big(\nabla f_n(\theta)-\nabla f(\theta)\big)\Big]\preccurlyeq 2 \EE \big[\zeta_n (\theta-\tnc)\otimes (\theta-\tnc)\zeta_n\big]+ 2\EE \xi_n\otimes \xi_n,
\]
and using Assumption \textbf{(A11)}
\[
 \EE \big[\zeta_n (\theta-\tnc)\otimes (\theta-\tnc)\zeta_n\big]\preccurlyeq r^2\EE \zeta_n \zeta_n\preccurlyeq r^2 \kappa (\Tr \Sigma) \Sigma.
\]
Therefore 
\[
 \EE \Big[\big(\nabla f_n(\theta)-\nabla f(\theta)\big)\otimes \big(\nabla f_n(\theta)-\nabla f(\theta)\big)\Big]\preccurlyeq 2\big(\sigma^2 + r^2 \kappa (\Tr \Sigma)\big)\Sigma.
\]
Hence Proposition~\ref{prop:theta} already implies  for all step-size such that  $h-\gamma f$ is convex 
\[\frac{1}{2}{\EE \Vert  \bar \theta_n - \theta_*\Vert_{\Sigma}^2}
  \leq
  2 \frac{D_h(\theta_{*},\theta_{0})}{\gamma n} +\frac{8d}{n} (\sigma^2 +\kappa r^2\tr\Sigma).
  \]
  
\subsection{A general result}
We prove in this section a more general result than Proposition~\ref{prop:thetasto} under the additional assumption
\begin{description}
 \item [\textbf{(A14)}]
There exists $b\in[0,1]$ and $\mu_b>0$ such that $h-\frac{\mu_b}{2}\Vert\cdot\Vert_{\Sigma^b}^2$ is convex.
\end{description}
\begin{proposition}\label{prop:thetastogen}
Assume \textbf{(A2-4)} and \textbf{(A7-14)}. Consider the recursion in \eq{daquad}. For any constant step-size $\gamma$  such that $\gamma \leq \min\{ \frac{\mu_b}{4\kappa \tr \Sigma^{1-b}},  \frac{1}{\kappa L d}\}$. Then 
 \begin{multline*}
  \frac{1}{2}{\EE \Vert \bar \theta_{n}-\theta_*\Vert_\Sigma^2}
  \leq
    { 2\frac{D_h(\theta_*,\theta_{0})}{\gamma n}}
      +\frac{24}{n}\tr \Sigma^{-1}C
   +\frac{16\kappa d \gamma }{n \mu_b}\tr C \Sigma^{-b}  
 \\ +\frac{8\kappa d  }{n }\bigg(\frac{4\kappa \gamma \tr \Sigma^{1-b}}{\mu_b}+3\bigg)\Vert\theta_*-\tnc\Vert_\Sigma^2  
   + {  80 \frac{\kappa d}{\gamma n^2}} {D_h(\theta_*,\theta_{0}) }
  +\frac{16\kappa d}{n^2}g(\theta_0).
   \end{multline*}
\end{proposition}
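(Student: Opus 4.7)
The plan is to mirror the two-step argument of Proposition~\ref{prop:theta} while handling the extra multiplicative noise $\zeta_n = x_n \otimes x_n - \Sigma$ that enters the oracle $\nabla f_n(\theta) = (\Sigma+\zeta_n)(\theta-\tnc)-\xi_n$. I introduce the deterministic companion iterates $(\omega_n,\phi_n)$ produced by DA with the true gradient $\nabla f$ from the same starting point, decompose $\bar\theta_n-\theta_* = (\bar\theta_n-\bar\phi_n)+(\bar\phi_n-\theta_*)$, and bound the deterministic piece with Proposition~\ref{prop:phifunction}. Denoting $\epsilon_n = \eta_n-\omega_n$, the stochastic difference satisfies $\epsilon_n = \epsilon_{n-1} - \gamma\Sigma(\theta_{n-1}-\phi_{n-1}) - \gamma\zeta_n(\theta_{n-1}-\tnc) + \gamma \xi_n$ with $\epsilon_0 = 0$, and the proof is organised in three blocks.

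The first block extends Lemma~\ref{lemma:etasemisto} to the multiplicative-noise setting. Expanding $\|\epsilon_n\|_{\Sigma^{-1}}^2$ and conditioning on $\mathcal{F}_{n-1}$, the inner product with $(\theta_{n-1}-\phi_{n-1})$ vanishes by the same convexity argument, which under Assumption~\textbf{(A13)} is valid whenever $\gamma \leq 1/(\kappa L d)$. There remain the additive piece $\gamma^2\tr\Sigma^{-1}C$ and the new multiplicative piece $\gamma^2\EE\|\zeta_n(\theta_{n-1}-\tnc)\|_{\Sigma^{-1}}^2$. Using Assumption~\textbf{(A11)} in the matrix form $\EE[\zeta_n A\zeta_n]\preccurlyeq \kappa(\tr A\Sigma)\Sigma$ applied to the rank-one $A = (\theta-\tnc)(\theta-\tnc)^\top$, the multiplicative piece is bounded by $\gamma^2\kappa d\,\EE\|\theta_{n-1}-\tnc\|_\Sigma^2$, so that iterating yields $\EE\|\epsilon_n\|_{\Sigma^{-1}}^2 \leq n\gamma^2\tr\Sigma^{-1}C + \gamma^2\kappa d\sum_{k<n}\EE\|\theta_k-\tnc\|_\Sigma^2$.

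The main obstacle is the second block: closing the loop by bounding $\sum_{k<n}\EE\|\theta_k-\tnc\|_\Sigma^2$. Here I would rerun the Bregman-divergence Lyapunov inequality \eq{Breglya} on the stochastic oracle, take total expectation, and use Assumption~\textbf{(A14)} in the form $\|\cdot\|_{\Sigma^b}^2 \leq 2h/\mu_b$ to absorb the noise contributions into the Lyapunov function. After inverting the strong convexity, the additive noise $\xi_n$ contributes $\gamma^2 \tr C\Sigma^{-b}/(2\mu_b)$ per iteration, while the multiplicative noise contributes a term proportional to $(\gamma^2\kappa\tr\Sigma^{1-b}/\mu_b)\|\theta_{k-1}-\tnc\|_\Sigma^2$ that is reabsorbed provided $\gamma \leq \mu_b/(4\kappa\tr\Sigma^{1-b})$ (this is where the step-size constraint on $\mu_b$ originates). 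Telescoping the resulting recursion, bounding $D_h$ by $\tilde D_n$ via Lemma~\ref{lem:Breg}, and splitting $\|\theta_k-\tnc\|_\Sigma^2\leq 2\|\theta_k-\theta_*\|_\Sigma^2+2\|\theta_*-\tnc\|_\Sigma^2$ then yields $\sum_{k<n}\EE\|\theta_k-\tnc\|_\Sigma^2 = O\bigl(n\|\theta_*-\tnc\|_\Sigma^2 + n\gamma\tr C\Sigma^{-b}/\mu_b + D_h(\theta_*,\theta_0)/\gamma + g(\theta_0)\bigr)$, matching the constants in the statement.

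The third block is the averaging argument of Lemma~\ref{lemma:alphabeta}. Rearranging the $\epsilon_k$-recursion gives $\Sigma(\theta_k-\phi_k) = (\epsilon_k-\epsilon_{k+1})/\gamma - \zeta_{k+1}(\theta_k-\tnc) + \xi_{k+1}$; summing over $k<n$, dividing by $n$ and applying $\Sigma^{-1/2}$ writes $\bar\theta_n-\bar\phi_n$ as a telescoping term plus two martingale sums. Using $(a+b+c)^2\leq 3(a^2+b^2+c^2)$ together with the martingale second-moment identity gives $\EE\|\bar\theta_n-\bar\phi_n\|_\Sigma^2 \lesssim \EE\|\epsilon_n\|_{\Sigma^{-1}}^2/(\gamma n)^2 + \tr\Sigma^{-1}C/n + (\kappa d/n^2)\sum_{k<n}\EE\|\theta_k-\tnc\|_\Sigma^2$. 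Substituting the estimates of the first two blocks and combining with Proposition~\ref{prop:phifunction} through $\|\bar\theta_n-\theta_*\|_\Sigma^2\leq 2\|\bar\theta_n-\bar\phi_n\|_\Sigma^2+2\|\bar\phi_n-\theta_*\|_\Sigma^2$ then produces the announced six-term bound. The delicate point throughout is the coupled estimate of the second block, whose closure is precisely what forces the step-size constraint $\gamma\leq \mu_b/(4\kappa\tr\Sigma^{1-b})$.
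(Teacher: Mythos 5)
Your proposal is correct and follows essentially the same route as the paper's proof: comparison with the noiseless companion iterates $(\omega_n,\phi_n)$, a drift bound on the dual difference, a Bregman--Lyapunov bound on $\sum_{k}\EE\Vert\theta_k-\tnc\Vert_\Sigma^2$ under \textbf{(A14)} (the paper's Lemma~\ref{lemma:alphabetafunction}, which is indeed where the constraint $\gamma\leq \mu_b/(4\kappa\tr\Sigma^{1-b})$ originates), and the averaging/martingale second-moment argument (Lemma~\ref{lemma:alphabetasto}). The only minor variation is that you keep the deterministic operator $\Sigma$ acting on the difference $\theta_{n-1}-\phi_{n-1}$ and push the full multiplicative term $\zeta_n(\theta_{n-1}-\tnc)$ into the martingale noise (so your drift step actually only needs $2h-\gamma f$ convex rather than the paper's $h-\gamma T$ condition behind $\gamma\leq 1/(\kappa L d)$), whereas the paper keeps $x_n\otimes x_n$ on the difference and treats only $\zeta_n(\phi_{n-1}-\tnc)$ as extra additive noise; both variants reduce to the same Lyapunov sum and produce the same six terms.
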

We note Assumption \textbf{(14)} is always satisfied for $b=1$, for which  it is  Assumption \textbf{(13)}. Therefore Proposition  \ref{prop:thetastogen} directly implies Proposition~\ref{prop:thetasto} as a corollary. 
We prove now two auxiliary lemmas which will be used in the proof of the Proposition~\ref{prop:thetastogen}.

\subsection{Two auxiliary results for least-squares objectives}
For $b\in[0,1]$, we denote by $T_b$ the operator $T_b=\EE [\langle x,\Sigma^{-b}x\rangle x\otimes x]$. We first prove that, for least-square objectives, the sum of the function evaluated along the primal iterates remains bounded. 
\begin{lemma}\label{lemma:alphabetafunction}
Let us consider the recursion  $\eta_n=\eta_{n-1}-\gamma  x_n\otimes x_n (\theta_{n-1}-\theta_*) +\gamma\xi_{n}$  and assume $g$ is positive and  there exist $\mu_b$ such that  $h- \frac{\mu_b }{2}\Vert\cdot \Vert_{\Sigma^b}^2$ is convex and $\kappa$ such that $T_b\preccurlyeq\kappa \tr(\Sigma^{1-b})\Sigma$, then for $\gamma\leq \mu_b/(4\kappa \tr \Sigma^{1-b})$ and $\theta\in \mathcal{X}$ we have
\begin{multline*}
\EE \sum_{i=0}^n[\psi(\theta_{i})-\psi(\theta) ]+ \Big(1-4\gamma \kappa \tr (\Sigma^{1-b})/\mu_b\Big)\sum_{i=0}^n\frac{1}{2}\EE\Vert \theta_{i}-\theta\Vert_\Sigma^2
 \\
 \leq 
 \frac{D_h(\theta,\theta_{0})-\EE D_h(\theta,\theta_{n+1}) }{\gamma}+(n+1)\gamma/\mu_b \tr \Sigma^{-b} C+4(n+1)\kappa \tr (\Sigma^{1-b})/\mu_b f(\theta)+g(\theta_0). 
\end{multline*}
\end{lemma}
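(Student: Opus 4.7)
My plan is to adapt the deterministic proof of Proposition~\ref{prop:phifunction} to the stochastic setting, with the extended Bregman divergence $\tilde D_i(\theta,\eta_i)$ of Appendix~\ref{app:nonsmoothbreg} playing the role of a Lyapunov function. The DA identity \eq{Breglya} is purely an algebraic rearrangement of the dual update, so substituting the stochastic oracle $g_i := x_i\otimes x_i(\theta_{i-1}-\theta_*) - \xi_i$ for the true gradient gives
$$\tilde D_i(\theta,\eta_i) - \tilde D_{i-1}(\theta,\eta_{i-1}) = -\tilde D_{i-1}(\theta_i,\eta_{i-1}) + \gamma\langle g_i,\theta_{i-1}-\theta_i\rangle - \gamma(g(\theta_i)-g(\theta)) - \gamma\langle g_i,\theta_{i-1}-\theta\rangle.$$

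For the forward pair I would apply Young's inequality with weight $\mu_b$, namely $\gamma\langle g_i,\theta_{i-1}-\theta_i\rangle \leq \tfrac{\gamma^2}{2\mu_b}\Vert g_i\Vert^2_{\Sigma^{-b}} + \tfrac{\mu_b}{2}\Vert\theta_{i-1}-\theta_i\Vert^2_{\Sigma^b}$, then use Assumption~\textbf{(A14)} and Lemma~\ref{lem:Breg} to chain $\tfrac{\mu_b}{2}\Vert\theta_{i-1}-\theta_i\Vert^2_{\Sigma^b} \leq D_h(\theta_i,\theta_{i-1}) \leq \tilde D_{i-1}(\theta_i,\eta_{i-1})$ so that the $-\tilde D_{i-1}(\theta_i,\eta_{i-1})$ term is exactly cancelled. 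For the backward term I take the conditional expectation given $\mathcal F_{i-1}$, use that the stochastic oracle is unbiased ($\EE[g_i\mid\mathcal F_{i-1}] = \nabla f(\theta_{i-1})$, i.e., the identity $\EE[\xi_i\mid\mathcal F_{i-1}] = -\nabla f(\theta_*)$ built into the definition of the residual), and exploit the quadratic structure of $f$ via $\langle\nabla f(\theta_{i-1}),\theta-\theta_{i-1}\rangle = f(\theta) - f(\theta_{i-1}) - \tfrac{1}{2}\Vert\theta-\theta_{i-1}\Vert^2_\Sigma$; the resulting negative Mahalanobis term is the key to the $O(1/n)$ contraction. The noise $\EE[\Vert g_i\Vert^2_{\Sigma^{-b}}\mid\mathcal F_{i-1}]$ is then bounded by $2\kappa\tr(\Sigma^{1-b})\Vert\theta_{i-1}-\theta_*\Vert^2_\Sigma + 2\tr(\Sigma^{-b}C)$ via the kurtosis inequality \eq{kurtocobain} applied to the multiplicative part.

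Summing the resulting per-step inequality from $i=1$ to $n+1$ telescopes the Bregman divergences (a second application of Lemma~\ref{lem:Breg} yields $\tilde D_{n+1}(\theta,\eta_{n+1})\geq D_h(\theta,\theta_{n+1})$), while the reindexing $\sum_{i=1}^{n+1}g(\theta_i) = \sum_{j=0}^n g(\theta_j) + g(\theta_{n+1}) - g(\theta_0)$ produces the boundary term $+g(\theta_0)$ on the right-hand side once the non-negative $g(\theta_{n+1})$ is dropped using \textbf{(A12)}. The crucial algebra is to split the multiplicative-noise contribution via $\Vert\theta_{i-1}-\theta_*\Vert^2_\Sigma \leq 2\Vert\theta_{i-1}-\theta\Vert^2_\Sigma + 2\Vert\theta-\theta_*\Vert^2_\Sigma$ and absorb the first half into the favorable $-\tfrac{\gamma}{2}\Vert\theta-\theta_{i-1}\Vert^2_\Sigma$ from the quadratic identity; this produces exactly the coefficient $(1 - 4\gamma\kappa\tr(\Sigma^{1-b})/\mu_b)$ appearing in the lemma, and the step-size restriction $\gamma \leq \mu_b/(4\kappa\tr\Sigma^{1-b})$ keeps it non-negative so that the contribution can sit on the left-hand side. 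The main obstacle I anticipate is matching the remaining residual $(n+1)\cdot O(\gamma\kappa\tr(\Sigma^{1-b})/\mu_b)\cdot\Vert\theta-\theta_*\Vert^2_\Sigma$ with the stated $f(\theta)$ term on the right: I plan to combine $\tfrac{1}{2}\Vert\theta-\theta_*\Vert^2_\Sigma = f(\theta) - f(\theta_*) - \langle\nabla f(\theta_*),\theta-\theta_*\rangle$ with the first-order optimality $-\nabla f(\theta_*)\in\partial g(\theta_*)$ and convexity of $g$ to obtain $\tfrac{1}{2}\Vert\theta-\theta_*\Vert^2_\Sigma \leq \psi(\theta) - \psi(\theta_*)$, and then use $g\geq 0$ together with the step-size restriction to rebrand the surviving constants as the advertised $f(\theta)$ contribution.
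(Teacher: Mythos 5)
Your proposal tracks the paper's proof essentially step for step: the stochastic version of the identity \eq{Breglya}, Young's inequality with weight $\mu_b\Sigma^b$ to cancel $-\tilde D_{i-1}(\theta_i,\eta_{i-1})$ via \textbf{(A14)}, Proposition~\ref{prop:lc} and Lemma~\ref{lem:Breg}, the conditional expectation of the backward term combined with the quadratic identity $\langle \nabla f(\theta_{i-1}),\theta_{i-1}-\theta\rangle = f(\theta_{i-1})-f(\theta)+\frac{1}{2}\Vert\theta_{i-1}-\theta\Vert_\Sigma^2$, the kurtosis bound $T_b\preccurlyeq\kappa\tr(\Sigma^{1-b})\Sigma$ on the multiplicative part, the split $\Vert\theta_{i-1}-c\Vert_\Sigma^2\le 2\Vert\theta_{i-1}-\theta\Vert_\Sigma^2+2\Vert\theta-c\Vert_\Sigma^2$ that produces the coefficient $1-4\gamma\kappa\tr(\Sigma^{1-b})/\mu_b$, and the telescoping with the index shift on the $g$-terms that produces $+g(\theta_0)$. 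All of this is exactly the paper's argument.

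The one soft spot is your last step. Because you center the multiplicative noise at $\theta_*$, your surviving residual is proportional to $\Vert\theta-\theta_*\Vert_\Sigma^2\le 2(\psi(\theta)-\psi(\theta_*))$, and the proposed ``rebranding'' into the stated $f(\theta)$ term does not go through: $\psi(\theta)-\psi(\theta_*)\le f(\theta)+g(\theta)$, and the positivity of $g$ cannot be used to discard $g(\theta)$, which sits on the wrong side of the inequality. The paper instead centers the oracle at the unconstrained minimizer $\tnc$ (writing $\nabla f_n(\theta)=x_n\otimes x_n(\theta-\tnc)+\xi_n$ with $\xi_n$ mean zero), so the residual is $\frac{1}{2}\Vert\theta-\tnc\Vert_\Sigma^2=f(\theta)-f(\tnc)\le f(\theta)$, using that $f\ge 0$ for least squares. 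This is a cosmetic repair rather than a conceptual one --- the paper itself remarks that a $2\psi(\theta)$ variant of the lemma is obtainable, and in the only downstream application one takes $\theta=\theta_*$ --- but as written your argument establishes the $\psi$-flavored variant rather than the bound with $f(\theta)$ as stated.
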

We note that we can also obtain a bound depending on $2\psi(\theta)$ rather than $4f(\theta)$ with a similar proof.

\begin{proof}
Let denote by $f_n(\theta)= x_n\otimes x_n (\theta-\tnc) +\xi_{n}$. Then following the proof of Proposition~\ref{prop:phifunction} (see \eq{Breglya}) we have the expansion
\begin{multline}\label{eq:dd}
  \tilde D_n(\theta,\eta_n)-\tilde D_{n-1}(\theta,\eta_{n-1})
  \leq
  -\gamma (g(\theta_n)-g(\theta)) -\gamma \langle \nabla f_n(\theta_{n-1}), \theta_{n-1}-\theta \rangle\\
  -D_h(\theta_n,\theta_{n-1})+\gamma \langle \nabla f_n(\theta_{n-1}), \theta_{n-1}-\theta_n \rangle.
\end{multline}
Since $h-\frac{\mu_b}{2}\Vert\cdot \Vert_{\Sigma^b}^2$ is convex, using Proposition~\ref{prop:lc}, we get that $D_h(\theta_n,\theta_{n-1})\geq\frac{\mu_b}{2}\Vert \theta_n-\theta_{n-1}\Vert_{\Sigma^b}^2$. Let denote by $A=-D_h(\theta_n,\theta_{n-1}), 
 +\gamma \langle \nabla f_n(\theta_{n-1}), \theta_{n-1}-\theta_n \rangle$,
 \BEAS 
 A
 &\leq&
 -\frac{\mu_b}{2}\Vert\theta_n-\theta_{n-1}\Vert_{\Sigma^b}^2
 +\gamma \langle x_n\otimes x_n (\theta_{n-1}-\tnc) +\xi_{n}, \theta_{n-1}-\theta_n \rangle\\
  &\leq&
 -\frac{\mu_b }{2}\Vert\theta_n-\theta_{n-1}\Vert_{\Sigma^b}^2\\
 &&
 + \Big \langle \frac{\gamma\Sigma^{-b/2}}{\sqrt{\mu_b}}[ x_n\otimes x_n (\theta_{n-1}-\tnc) +\xi_{n}], {\Sigma^{b/2}} {\sqrt{\mu_b}}\theta_{n-1}-\theta_n \Big\rangle\\
 &\leq&
 \frac{\gamma^2\mu_b}{2  }\Vert x_n\otimes x_n (\theta_{n-1}-\tnc) +\xi_{n}\Vert_{ \Sigma^{-b} }^2 \\
 &&\textstyle
 -\frac{1 }{2}\Vert{\gamma \Sigma^{b/2}}{\sqrt{\mu_b}}(\theta_n-\theta_{n-1})- \frac{\gamma \Sigma^{-b/2}}{\sqrt{\mu_b}}[ x_n\otimes x_n (\theta_{n-1}-\tnc) +\xi_{n}]\Vert_2^2\\
  &\leq&
 \frac{\gamma^2}{2\mu_b}\Vert   x_n\otimes x_n (\theta_{n-1}-\tnc) +\xi_{n}\Vert_{ \Sigma^{-1} }^2\\
 &\leq&
 \frac{\gamma^2}{\mu_b} \Vert  \theta_{n-1}-\tnc\Vert_{T_b}^2
 +  \frac{\gamma^2}{\mu_b}  \Vert  \xi_{n}\Vert_{ \Sigma^{-b} }^2.
 \EEAS
 Thus, taking the conditional expectation and assuming that $\kappa$ is such that $T_b\preccurlyeq\kappa \tr (\Sigma^{1-b}) \Sigma$ we obtain
 \begin{multline*}
   -\EE [D_h(\theta_n,\theta_{n-1})\mathcal{F}_{n-1}]
 +\gamma \EE \langle \nabla f_n(\theta_{n-1}), \theta_{n-1}-\theta_n \rangle\mathcal{F}_{n-1}]\\
 \leq 
 \frac{\gamma^2\kappa \tr (\Sigma^{1-b}) }{\mu_b} \Vert  \theta_{n-1}-\tnc\Vert_{ \Sigma }^2
+ \frac{\gamma^2}{\mu_b} \tr \Sigma^{-b} C.
 \end{multline*}
Taking again the conditional expectation in \eq{dd}, we have for $\theta\in\mathcal{X}$
 \BEAS
 \EE [\tilde D_n(\theta,\eta_n)\vert \mathcal{F}_{n-1}] -\tilde D_{n-1}(\theta,\eta_{n-1})
 &\leq&
 \frac{\gamma^2 \kappa}{\mu_b} \tr (\Sigma^{1-b})  \Vert  \theta_{n-1}-\tnc\Vert_{ \Sigma }^2
+ \frac{\gamma^2}{{\mu_b}} \tr \Sigma^{-b} C\\
&&
 -\gamma \EE [\langle x_n\otimes x_n (\theta_{n-1}-\tnc) +\xi_{n}, \theta_{n-1}-\theta \rangle \vert \mathcal{F}_{n-1}]\\
 &&
 -\gamma(\EE [g(\theta_n)\vert \mathcal{F}_{n-1}]-g(\theta))\\
 &\leq&
   \frac{\gamma^2 \kappa} {\mu_b} \tr (\Sigma^{1-b})  \Vert  \theta_{n-1}-\tnc\Vert_{ \Sigma }^2
      -\gamma \langle \theta_{n-1}-\tnc,\Sigma(\theta_{n-1}-\theta)\rangle
\\
   &&
      + \frac{\gamma^2}{{\mu_b}} \tr \Sigma^{-b} C
    -\gamma(\EE [g(\theta_n)\vert \mathcal{F}_{n-1}]-g(\theta)) .
 \EEAS  
 And we note that
\[                                                                                                                                                                                                                                                                -\gamma \langle \theta_{n-1}-\tnc,\Sigma(\theta_{n-1}-\theta_*)\rangle=  -\gamma[f(\theta_{n-1})-f(\theta_*)]-\frac{\gamma}{2}\Vert \theta_{n-1}-\theta\Vert_\Sigma^2.                                                                                                                                                                                                                                                      \]
Therefore 
\BEAS
 \EE [\tilde D_n(\theta_*,\eta_n)\vert \mathcal{F}_{n-1}] -\tilde D_{n-1}(\theta_*,\eta_{n-1})
 &\leq&
 -\gamma[f(\theta_{n-1})-f(\theta_*)+\EE [g(\theta_n)\vert \mathcal{F}_{n-1}]-g(\theta_*) ]\\
 && 
-\frac{\gamma}{2} \Big(1-4 \frac{\gamma \kappa}{\mu_b} \tr (\Sigma^{1-b})\Big)\Vert \theta_{n-1}-\tnc\Vert_\Sigma^2\\
&&
+2\frac{\gamma^2 \kappa}{\mu_b} \tr (\Sigma^{1-b}) \Vert \theta-\tnc\Vert_\Sigma^2
+  \frac{\gamma^2}{\mu_b} \tr \Sigma^{-b} C.
\EEAS
Taking the total expectation  we obtain
\begin{multline*}
 \EE f(\theta_{n-1})-f(\theta_*)+\EE g(\theta_n)-g(\theta_*) ]
+\frac{1}{2} \Big(1-4\frac{\gamma \kappa}{\mu_b} \tr (\Sigma^{1-b})\Big)\Vert \theta_{n-1}-\tnc\Vert_\Sigma^2\\
 \leq \frac{ \EE \tilde D_{n-1}(\theta_*,\eta_{n-1})-\EE \tilde D_n(\theta_*,\eta_{n}) }{\gamma}+2\frac{\gamma \kappa}{\mu_b}\tr (\Sigma^{1-b}) \Vert \theta-\tnc\Vert_\Sigma^2 + \frac{\gamma}{\mu_b} \tr \Sigma^{-b} C,
\end{multline*}
which, summing from $i=0$ to $i=n$, leads to
\begin{multline*}
 \sum_{i=0}^n[\EE f(\theta_{i})-f(\theta_*)+\EE g(\theta_i)-g(\theta_*) ]]+ \Big(1-4\frac{\gamma \kappa}{\mu_b} \tr (\Sigma^{1-b}) \Big)\sum_{i=0}^n\frac{1}{2}\Vert \theta_{i}-\tnc\Vert_\Sigma^2
 \leq 
 \\
 \frac{D_h(\theta_*,\theta_{0})-\EE \tilde D_{n+1}(\theta_*,\eta_{n+1}) }{\gamma}+ 4\frac{\gamma \kappa}{\mu_b} \tr (\Sigma^{1-b}) (n+1)\Vert \theta-\tnc\Vert_\Sigma^2
 \\+(n+1)\frac{\gamma }{\mu_b} \tr \Sigma^{-b} C -\EE g(\theta_{n+1})+g(\theta_0). 
\end{multline*}
The result follows if $g$ is non negative.
\end{proof}
We now present an extension of Lemma~\ref{lemma:etasemisto} to least-squares objectives.
\begin{lemma}\label{lemma:etasto}
Let us consider two sequences of iterates $(\mu_k,\alpha_k)$ and $(\nu_k,\beta_k)$ which satisfy the recursion  $\mu_n-\nu_n=\mu_{n-1}-\nu_{n-1}-\gamma x_n\otimes x_n (\alpha_{n-1}-\beta_{n-1}) +\gamma\xi_n$, $\alpha_n=\nabla h_n^*(\mu_n)$ and $\beta_n=\nabla h_n^*(\nu_n)$  and denote by $C=\EE[x_n\otimes x_n]$ for $n\geq 0$. Assume that $\gamma$ is such that $h-\gamma T$ is convex. Then
\[
\EE \Vert \mu_{n}-\nu_{n}\Vert_{\Sigma^{-1}}^2
\leq 
\EE \Vert \mu_{0}-\nu_{0}\Vert_{\Sigma^{-1}}^2 
+2\gamma^2 n \Tr \Sigma^{-1} C.
\]
\end{lemma}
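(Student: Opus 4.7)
The plan is to mirror the argument of Lemma~\ref{lemma:etasemisto} while accounting for two new features that distinguish the least-squares oracle from the additive-noise one: the gradient $\Sigma(\alpha_{n-1}-\beta_{n-1})$ is replaced by the multiplicative stochastic quantity $x_n\otimes x_n (\alpha_{n-1}-\beta_{n-1})$, and the additive residual $\xi_n$ is in general not independent of $x_n$. Setting $v := \alpha_{n-1}-\beta_{n-1}$, I would write the recursion as $\mu_n-\nu_n = (\mu_{n-1}-\nu_{n-1}) - \gamma (x_n\otimes x_n) v + \gamma \xi_n$ and expand $\|\mu_n-\nu_n\|_{\Sigma^{-1}}^2$ as the sum of $\|\mu_{n-1}-\nu_{n-1}\|_{\Sigma^{-1}}^2$, the quadratic ``noise'' term $\gamma^2 \|(x_n\otimes x_n) v - \xi_n\|_{\Sigma^{-1}}^2$, and the cross term $-2\gamma \langle \mu_{n-1}-\nu_{n-1}, \Sigma^{-1}[(x_n\otimes x_n) v - \xi_n]\rangle$.

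To sidestep the coupling between $\xi_n$ and $x_n$, I would bound the noise term crudely via $\|a-b\|^2 \leq 2\|a\|^2+2\|b\|^2$, which is precisely what introduces the factor $2$ in the stated inequality. Taking $\EE[\cdot|\mathcal F_{n-1}]$ then yields: (i) the $\xi_n$ cross term vanishes by the martingale-difference property; (ii) the cross term with $x_n\otimes x_n$ reduces, via $\EE[x_n\otimes x_n|\mathcal F_{n-1}]=\Sigma$, to $-2\gamma\langle v,\mu_{n-1}-\nu_{n-1}\rangle$; (iii) the multiplicative-noise contribution becomes $2\gamma^2 \EE\|(x_n\otimes x_n) v\|_{\Sigma^{-1}}^2 = 2\gamma^2 \langle v, T v\rangle$ where $T = \EE[\langle x,\Sigma^{-1}x\rangle\, x\otimes x]$ is exactly the operator appearing in the hypothesis ``$h-\gamma T$ convex''; (iv) the additive-noise contribution is $2\gamma^2 \tr\Sigma^{-1} C$.

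To cancel the two central terms, I would reuse the decomposition from Lemma~\ref{lemma:etasemisto}: since $\alpha_{n-1}=\nabla h_{n-1}^*(\mu_{n-1})$ gives $\mu_{n-1}-\nabla h(\alpha_{n-1}) \in (n-1)\gamma\, \partial g(\alpha_{n-1})$ (similarly for $\beta_{n-1}$), convexity of $g$ implies $\langle v,\mu_{n-1}-\nu_{n-1}\rangle \geq \langle \nabla h(\alpha_{n-1})-\nabla h(\beta_{n-1}),v\rangle$. The convexity of $h-\gamma T$ (equivalently, monotonicity of $\theta\mapsto \nabla h(\theta)-\gamma T\theta$) then gives $\langle \nabla h(\alpha_{n-1})-\nabla h(\beta_{n-1}),v\rangle \geq \gamma\langle v,T v\rangle$, so $-2\gamma\langle v,\mu_{n-1}-\nu_{n-1}\rangle + 2\gamma^2\langle v,T v\rangle \leq 0$. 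Only the additive-noise term $2\gamma^2\tr\Sigma^{-1}C$ survives per step; taking total expectations and telescoping from $0$ to $n-1$ gives the claim.

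The main obstacle is the dependence between $\xi_n$ and $x_n$, which blocks the cleaner bookkeeping of Lemma~\ref{lemma:etasemisto}: one cannot expand $\|(x_n\otimes x_n) v - \xi_n\|_{\Sigma^{-1}}^2$ and hope that the cross term vanishes in conditional expectation. The AM-GM split resolves this, but it doubles the multiplicative-noise term, which is why the hypothesis is stated as ``$h-\gamma T$ convex'' rather than the weaker ``$2h-\gamma T$ convex'' used in the semi-stochastic lemma, and it is what forces the factor $2$ in the bound.
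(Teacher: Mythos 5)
Your proposal is correct and follows essentially the same route as the paper's proof: expand the square, use $\Vert a+b\Vert^2\leq 2\Vert a\Vert^2+2\Vert b\Vert^2$ to decouple the multiplicative and additive noise (producing the factor $2$), reduce the cross term to $-2\gamma\langle v,\mu_{n-1}-\nu_{n-1}\rangle$ via $\EE[x_n\otimes x_n\,\vert\,\mathcal F_{n-1}]=\Sigma$, and cancel it against $2\gamma^2\langle v,Tv\rangle$ using monotonicity of $\partial g$ together with convexity of $h-\tfrac{\gamma}{2}\Vert\cdot\Vert_T^2$, exactly as in the proof of Lemma~\ref{lemma:etasemisto}. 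Your closing remark correctly identifies why the step-size condition tightens from $2h-\gamma f$ to $h-\gamma T$.
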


We note that the condition $h-\gamma T$ is rather restrictive since bounds on $T$ are often of the form $d$ times a matrix. For instance \eq{kurtocobain} directly implies $T\preccurlyeq \kappa d \Sigma$. Even for independent normal data $x_n$ with diagonal covariance matrix $\Sigma$ we are able to derive  the equality $T=(d+2)\Sigma$.
\begin{proof}
We expand
\BEAS
\Vert \mu_{n}-\nu_{n}\Vert_{\Sigma^{-1}}^2
&=&
\Vert \mu_{n-1}-\nu_{n-1}\Vert_{\Sigma^{-1}}^2
+ \gamma^2 \Vert x_n\otimes x_n (\alpha_{n-1}-\beta_{n-1})+\xi_{n}\Vert_{\Sigma^{-1}}^2 \\
&&
-2\gamma \langle  x_n\otimes x_n (\alpha_{n-1}-\beta_{n-1}) +\xi_{n},\Sigma^{-1} (\mu_{n-1}-\nu_{n-1})\rangle.
\EEAS
Taking conditional expectations, we get
\BEAS
 \EE[\Vert \mu_{n}-\nu_{n}\Vert_{\Sigma^{-1}}^2 \vert \mathcal{F}_{n-1}]
 &=& 
\Vert \mu_{n-1}-\nu_{n-1}\Vert_{\Sigma^{-1}}^2
 +\gamma^2\EE[ \Vert x_n\otimes x_n (\alpha_{n-1}-\beta_{n-1})+\xi_{n}\Vert_{\Sigma^{-1}}^2 \vert \mathcal{F}_{n-1}]\\
 &&
 -2\gamma \EE[\langle  x_n\otimes x_n (\alpha_{n-1}-\beta_{n-1}) +\xi_{n},\Sigma^{-1} (\mu_{n-1}-\nu_{n-1})\rangle\vert \mathcal{F}_{n-1}]\\
 &=&
\Vert \mu_{n-1}-\nu_{n-1}\Vert_{\Sigma^{-1}}^2
 +\gamma^2\EE[ \Vert x_n\otimes x_n (\alpha_{n-1}-\beta_{n-1})+\xi_{n}\Vert_{\Sigma^{-1}}^2 \vert \mathcal{F}_{n-1}]\\
 &&
 -2\gamma \langle  \alpha_{n-1}-\beta_{n-1} ,\mu_{n-1}-\nu_{n-1}\rangle.
\EEAS
Using $(a+b)^2\leq 2a^2+2b^2$ and denoting by $B= \EE[ \Vert x_n\otimes x_n (\alpha_{n-1}-\beta_{n-1})+\xi_{n}\Vert_{\Sigma^{-1}}^2 \vert \mathcal{F}_{n-1}]$, this leads to
\BEAS
B
&\leq&
2\EE[ \Vert x_n\otimes x_n (\alpha_{n-1}-\beta_{n-1})\Vert_{\Sigma^{-1}}^2 \vert \mathcal{F}_{n-1}]
+2\EE[ \Vert \xi_{n}\Vert_{\Sigma^{-1}}^2 \vert \mathcal{F}_{n-1}]\\
&\leq&
 2 \Vert \alpha_{n-1}-\beta_{n-1}\Vert_{\EE[x_n\otimes x_n \Sigma^{-1} x_n\otimes x_n \vert \mathcal{F}_{n-1}]
}^2 
+2\tr \Sigma^{-1}\EE[\epsilon_{n}\otimes \epsilon_{n} \vert \mathcal{F}_{n-1}]\\
&\leq&
2\Vert \alpha_{n-1}-\beta_{n-1}\Vert_T^2 +2\tr \Sigma^{-1}C,
\EEAS
with $T= \EE[x\otimes x\Sigma^{-1}  x\otimes x]$. Thus we obtain
\BEAS
 \EE[\Vert \mu_{n}-\nu_{n}\Vert_{\Sigma^{-1}}^2 \vert \mathcal{F}_{n-1}]
 &\leq&
\Vert \mu_{n-1}-\nu_{n-1}\Vert_{\Sigma^{-1}}^2
+2\gamma^2\tr \Sigma^{-1}C \\
 &&
 -2\gamma \langle\mu_{n-1}-\nu_{n-1}-\gamma T(\alpha_{n-1}-\beta_{n-1}), \alpha_{n-1}-\beta_{n-1} \rangle\\
 &\leq&
 \Vert \mu_{n-1}-\nu_{n-1}\Vert_{\Sigma^{-1}}^2
+2\gamma^2\tr \Sigma^{-1}C,
\EEAS
assuming that $\gamma$ is such $h-\gamma \frac{1}{2}\Vert \cdot \Vert_T^2$ is convex (as in the proof of Lemma~\ref{lemma:etasemisto}). Taking global expectations,  we have shown that 
\[
\EE \Vert \mu_{n}-\nu_{n}\Vert_{\Sigma^{-1}}^2
\leq   \EE \Vert \mu_{n-1}-\nu_{n-1}\Vert_{\Sigma^{-1}}^2+2\gamma^2\tr \Sigma^{-1} C.
\]
\end{proof}
\subsection{Bound on the difference between two averages of primal variables}
We present now the following lemma with is an analogue of Lemma~\ref{lemma:alphabeta} for the least-squares problem. It shows that the difference between the average of two sequences of primal iterates which follow the same recursion is  $O(1/n)$.
\begin{lemma}\label{lemma:alphabetasto}
Let us consider two sequences of iterates $(\mu_k,\alpha_k)$ and $(\nu_k,\beta_k)$ which satisfy the recursion  $\mu_n-\nu_n=\mu_{n-1}-\nu_{n-1}-\gamma x_n\otimes x_n (\alpha_{n-1}-\beta_{n-1}) +\gamma\xi_n$, $\alpha_n=\nabla h_n^*(\mu_n)$ and $\beta_n=\nabla h_n^*(\nu_n)$. For $n\geq0$ denote by $C=\EE[x_n\otimes x_n]$.  Assume that $\gamma$ is such that $h-\gamma T$ is convex and there exists $\kappa$ such that $ T\preccurlyeq \kappa d \Sigma$. Then
\[
{\EE \Vert\bar \alpha_{n}-\bar \beta_{n}\Vert_\Sigma^2}
\leq
4{ \frac{\kappa d  -1}{ n^2}\sum_{i=0}^{n-1} \EE \Vert \alpha_{i}-\beta_{i}\Vert_\Sigma^2}
+4 \frac{\Vert \eta_0-\mu_0\Vert^2_{\Sigma^{-1}}}{(\gamma n)^2}
+{\frac{8}{n}\Tr \Sigma^{-1} C }.
\]

\end{lemma}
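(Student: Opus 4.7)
The plan is to mimic the proof of Lemma~\ref{lemma:alphabeta}, inserting one additional martingale term to account for the multiplicative noise $x_{i+1}\otimes x_{i+1}-\Sigma$. Starting from the recursion, write
$$
\Sigma(\alpha_i-\beta_i) \;=\; x_{i+1}\otimes x_{i+1}(\alpha_i-\beta_i) \;-\; (x_{i+1}\otimes x_{i+1}-\Sigma)(\alpha_i-\beta_i),
$$
and use the recursion to replace $x_{i+1}\otimes x_{i+1}(\alpha_i-\beta_i)$ by $[\mu_i-\nu_i-\mu_{i+1}+\nu_{i+1}]/\gamma+\xi_{i+1}$. Summing from $i=0$ to $n-1$ and multiplying by $\Sigma^{-1/2}/n$ yields
$$
\Sigma^{1/2}(\bar\alpha_n-\bar\beta_n) \;=\; \frac{\Sigma^{-1/2}(\mu_0-\nu_0-\mu_n+\nu_n)}{\gamma n} \;+\; \frac{1}{n}\sum_{i=0}^{n-1}\Sigma^{-1/2}\xi_{i+1} \;-\; \frac{1}{n}\sum_{i=0}^{n-1}\Sigma^{-1/2}(x_{i+1}\otimes x_{i+1}-\Sigma)(\alpha_i-\beta_i).
$$
I then apply $(a+b+c+d)^{2}\leq 4(a^{2}+b^{2}+c^{2}+d^{2})$ to split $\|\bar\alpha_n-\bar\beta_n\|^2_\Sigma$ into four contributions: the deterministic $\mu_0-\nu_0$ piece, the endpoint $\mu_n-\nu_n$ piece, the additive-noise martingale, and the new multiplicative-noise martingale.

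Next I bound each piece in expectation. The deterministic piece gives $\|\mu_0-\nu_0\|^2_{\Sigma^{-1}}/(\gamma n)^2$ directly. The endpoint piece is controlled by Lemma~\ref{lemma:etasto}, which under the step-size restriction $h-\gamma T$ convex yields $\EE\|\mu_n-\nu_n\|^2_{\Sigma^{-1}}\leq \|\mu_0-\nu_0\|^2_{\Sigma^{-1}}+2 n\gamma^{2}\Tr\Sigma^{-1}C$, so that dividing by $(\gamma n)^2$ produces another $\|\mu_0-\nu_0\|^2_{\Sigma^{-1}}/(\gamma n)^2$ plus a $\Tr\Sigma^{-1}C/n$ term. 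For the additive-noise martingale, orthogonality of martingale increments gives
$$
\EE\Bigl\|\tfrac{1}{n}\sum_{i=0}^{n-1}\Sigma^{-1/2}\xi_{i+1}\Bigr\|^2_{2} \;=\; \frac{1}{n^{2}}\sum_{i=0}^{n-1}\Tr\bigl(\Sigma^{-1}\EE[\xi_{i+1}\otimes\xi_{i+1}]\bigr) \;\leq\; \frac{\Tr\Sigma^{-1}C}{n}.
$$

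The crux, and the only genuinely new step compared with Lemma~\ref{lemma:alphabeta}, is handling the multiplicative-noise martingale. Since $\alpha_i,\beta_i$ are $\mathcal{F}_i$-measurable and $x_{i+1}\otimes x_{i+1}-\Sigma$ has zero conditional mean, the cross-terms vanish and
$$
\EE\Bigl\|\tfrac{1}{n}\sum_{i=0}^{n-1}\Sigma^{-1/2}(x_{i+1}\otimes x_{i+1}-\Sigma)(\alpha_i-\beta_i)\Bigr\|^2_{2} \;=\; \frac{1}{n^{2}}\sum_{i=0}^{n-1}\EE\bigl\langle\alpha_i-\beta_i,\,W(\alpha_i-\beta_i)\bigr\rangle,
$$
where $W=\EE[(x\otimes x-\Sigma)\Sigma^{-1}(x\otimes x-\Sigma)]$. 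Expanding the product and taking expectations gives the identity $W=T-\Sigma$, and the assumption $T\preccurlyeq\kappa d\,\Sigma$ then yields $W\preccurlyeq(\kappa d-1)\Sigma$. Hence this term contributes $(\kappa d-1)\sum_{i=0}^{n-1}\EE\|\alpha_i-\beta_i\|_\Sigma^{2}/n^{2}$. Combining the four contributions with the factor $4$ from the convexity inequality gives the stated bound.

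The main obstacle is the identity $\EE[(x\otimes x-\Sigma)\Sigma^{-1}(x\otimes x-\Sigma)]=T-\Sigma$ together with the step-size restriction that $h-\gamma T$ is convex (required to invoke Lemma~\ref{lemma:etasto}); both rely on replacing the qualitative control $\Sigma(\alpha-\beta)$ available in the additive-noise case by the rougher $x\otimes x(\alpha-\beta)$, which is exactly where the operator $T$ and its kurtosis bound $\kappa d\,\Sigma$ enter.
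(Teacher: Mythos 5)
Your decomposition is exactly the paper's: the same splitting $\Sigma(\alpha_i-\beta_i)=x_{i+1}\otimes x_{i+1}(\alpha_i-\beta_i)-(x_{i+1}\otimes x_{i+1}-\Sigma)(\alpha_i-\beta_i)$, the same substitution via the recursion, the same appeal to Lemma~\ref{lemma:etasto} for the endpoint, and the same key identity $\EE[(x\otimes x-\Sigma)\Sigma^{-1}(x\otimes x-\Sigma)]=T-\Sigma\preccurlyeq(\kappa d-1)\Sigma$ for the new multiplicative-noise martingale. All of these steps are correct.

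The one place where your argument falls short of the stated bound is the inequality $(a+b+c+d)^2\leq 4(a^2+b^2+c^2+d^2)$. Tracking your constants: the $\mu_0-\nu_0$ piece contributes $4\Vert\mu_0-\nu_0\Vert^2_{\Sigma^{-1}}/(\gamma n)^2$ directly and another $4\Vert\mu_0-\nu_0\Vert^2_{\Sigma^{-1}}/(\gamma n)^2$ through the endpoint bound, giving a coefficient of $8$ rather than the claimed $4$; similarly the trace term picks up $4/n$ from the additive martingale plus $8/n$ from the endpoint, giving $12/n$ rather than $8/n$. So you prove a valid but strictly weaker inequality. The paper avoids this loss by splitting off only the endpoint $\mu_n-\nu_n$ with a factor $2$, and then observing that the remaining three terms consist of a zero-mean martingale (with increments $X_{i+1}+\xi_{i+1}$) plus a deterministic constant, so their cross-terms vanish exactly in expectation; the only further loss is the increment-level bound $\Vert X_{i+1}+\xi_{i+1}\Vert^2\leq 2\Vert X_{i+1}\Vert^2+2\Vert\xi_{i+1}\Vert^2$, which yields the coefficients $4$, $2$, $2$, $4$ of \eq{mink} and hence the stated constants. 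This is a purely quantitative repair, not a conceptual one, but as written your proof does not establish the lemma with its stated constants.
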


\begin{proof}
Using the expansion $\mu_n-\nu_n=\mu_{n-1}-\nu_{n-1}-\gamma x_n\otimes x_n (\alpha_{n-1}-\beta_{n-1}) +\gamma\xi_n$, we derive
\BEAS
\Sigma(\alpha_n-\beta_n)
&=&
(\Sigma-x_{n+1}\otimes x_{n+1})(\alpha_n-\beta_n)
+x_{n+1}\otimes x_{n+1}(\alpha_n-\beta_n)\\
&=&
(\Sigma-x_{n+1}\otimes x_{n+1})(\alpha_n-\beta_n)+\frac{\mu_n-\nu_n-\mu_{n+1}+\nu_{n+1}}{\gamma}+\xi_{n+1}.
\EEAS
We obtain by summing $n$ times
\BEAS
\Sigma^{1/2} \sum_{i=0}^{n-1}(\alpha_i-\beta_i)&=&\sum_{i=0}^{n-1}\Sigma^{-1/2} X_{i+1}+\frac{\Sigma^{-1/2} (\mu_0-\nu_0-\mu_n+\nu_n)}{\gamma}+\sum_{i=0}^{n-1}\Sigma^{-1/2} \xi_{i+1},
\EEAS
where we denote by $X_i=(\Sigma-x_{i}\otimes x_{i})(\alpha_{i-1}-\beta_{i-1})$ which is a square-integrable martingale difference sequence.
We use $(a+b)^2\leq2(a^2+b^2)$ to obtain
\[
 \Vert  (\bar \alpha_{n}-\bar \beta_{n})\Vert_\Sigma^2 
 \leq
 \frac{2}{n^2} \Big\Vert\sum_{i=0}^{n-1}\Sigma^{-1/2} X_{i+1}+\frac{\Sigma^{-1/2} (\mu_0-\nu_0)}{\gamma}+\sum_{i=0}^{n-1}\Sigma^{-1/2} \xi_{i+1} \Big\Vert_2^2
 +2 \frac{\Vert \mu_n-\nu_n \Vert^2_{\Sigma^{-1}}}{(\gamma n)^2}.
\]
Therefore using martingale square moment inequalities which here amount to considering the variance of the sum as the sum of the variance, we have
\begin{multline}\label{eq:mink}
  {\EE \Vert  (\bar \alpha_{n}-\bar \beta_{n})\Vert_\Sigma^2}
  \leq
  \frac{4}{n^2}{\sum_{i=0}^{n-1} \EE \Vert  X_{i+1}\Vert_{\Sigma^{-1}}^2}
  + 2\frac{\Vert\mu_0-\nu_0\Vert^2_{\Sigma^{-1}}}{(\gamma n)^2} \\
  +2\frac{{\EE \Vert \mu_n-\nu_n \Vert^2_{\Sigma^{-1}}}}{(\gamma n)^2}  +\frac{4}{n^2}{\sum_{i=0}^{n-1}\Tr \Sigma^{-1}\EE( \xi_{i+1}\otimes \xi_{i+1})}.
\end{multline}
\begin{itemize}
 \item
The variance term may be bounded as
\[
\sum_{i=0}^{n-1}\Tr \Sigma^{-1}\EE( \xi_{i+1}\otimes \xi_{i+1}) \leq \sum_{i=0}^{n-1}\Tr \Sigma^{-1} C \leq n \Tr \Sigma^{-1} C. 
\]
\item
Following Lemma~\ref{lemma:etasto} we bound the dual iterates $\EE \Vert \mu_n-\nu_n \Vert^2_{\Sigma^{-1}}$ as
\[
\frac{{\EE \Vert \mu_n-\nu_n \Vert^2_{\Sigma^{-1}}}}{(\gamma n)^2}
\leq
 \frac{\Vert\mu_0-\nu_0\Vert^2_{\Sigma^{-1}}}{(\gamma n)^2} 
+ {\frac{2}{n}\Tr \Sigma^{-1} C }.
\]
\item
The martingale difference sequence $(X_i)$ satisfies
\BEAS
 \EE \Vert \Sigma^{-1/2}  X_{i+1}\Vert_2^2 
 &\leq &
 \EE \langle (\Sigma-x_{i+1}\otimes x_{i+1})(\alpha_{i}-\beta_{i}),\Sigma^{-1}   (\Sigma-x_{i+1}\otimes x_{i+1})(\alpha_{i}-\beta_{i})\rangle \\
 &\leq &
 \langle \alpha_{i}-\beta_{i}, \EE [(\Sigma-x_{i+1}\otimes x_{i+1})^\top \Sigma^{-1}(\Sigma-x_{i+1}\otimes x_{i+1})] (\alpha_{i}-\beta_{i})\rangle\\
 &\leq &
 \langle \alpha_{i}-\beta_{i}, [\EE (x_{i+1}\otimes x_{i+1})^\top\Sigma^{-1}x_{i+1}\otimes x_{i+1}-\Sigma ] (\alpha_{i}-\beta_{i})\rangle\\ 
  &\leq &
  \langle \alpha_{i}-\beta_{i},[ T-\Sigma ](\alpha_{i}-\beta_{i})\rangle \\
  &\leq&(\kappa d -1) \Vert \alpha_{i}-\beta_{i}\Vert^2_{\Sigma}.
\EEAS
\end{itemize}
Consequently we obtain in \eq{mink}
\[
\EE \Vert \Sigma^{1/2} (\bar \alpha_{n}-\bar \beta_{n})\Vert_2^2
\leq
4{ \frac{\kappa d-1}{ n^2}\sum_{i=0}^{n-1} \EE \lVert \alpha_{i}-\beta_{i}\Vert^2_\Sigma}
+4 \frac{\Vert \mu_0-\nu_0\Vert^2_{\Sigma^{-1}}}{(\gamma n)^2}
+\frac{8}{n}\Tr \Sigma^{-1} C .
\]
\end{proof}

\subsection{Application of Lemma~\ref{lemma:alphabetasto} to the proof of Proposition~\ref{prop:thetastogen}}

We are now able to prove Proposition~\ref{prop:thetastogen} using Lemma~\ref{lemma:alphabetasto}. 

Firstly we can directly apply  Lemma~\ref{lemma:alphabetasto} to $(\mu_n=\eta_n,\alpha_n=\theta_n)$ and $(\nu_n=\eta_n^*,\beta_n=\theta_*)$ where $(\eta_n^*,\theta_*)$ are defined in \eq{etastar}. This implies 
\[
{\EE \Vert \Sigma^{1/2} (\bar \theta_{n}- \theta_*)\Vert_2^2}
\leq
4{ \frac{\kappa d-1}{ n^2}\sum_{i=0}^{n-1} \EE \lVert \theta_{i}-\theta_{*}\Vert^2_\Sigma}
+4 \frac{\Vert \nabla h(\theta_0)-\nabla h(\theta_*)\Vert^2_{\Sigma^{-1}}}{(\gamma n)^2}
+{\frac{8}{n}\Tr \Sigma^{-1} C }.
\]
Following Lemma~\ref{lemma:alphabetafunction}, the primal variables $(\theta_i)$ satisfy
\[
 \sum_{i=0}^{n-1} \EE \lVert \theta_{i}-\theta_{*}\Vert^2_\Sigma\leq 2\frac{D_h(\theta_*,\theta_{0}) }{\gamma}+2\frac{n\gamma}{\mu_b} \tr \Sigma^{-b}C+\frac{8n\gamma \kappa \tr \Sigma^{1-b}}{\mu_b}f(\theta_*)+2{g(\theta_0)}.
\]
This leads to the final bound
\begin{multline}
  {\EE \Vert \Sigma^{1/2} (\bar\theta_{n}-\theta_{*})\Vert_2^2}
   \leq
  {8\frac{\kappa d-1}{ \gamma n^2}D_h(\theta_*,\theta_0)}
      +4 \frac{ \Vert \nabla h(\theta_0)-\nabla h(\theta_*)\Vert^2_{\Sigma^{-1}}}{(\gamma n)^2}
      \\
      +8  {\frac{1}{n}\Tr \Sigma^{-1} C }
      +8\frac{\kappa d -1 }{n}\frac{\gamma}{\mu_b} [\tr \Sigma^{-b}C+4\kappa \tr \Sigma ^{1-b} f(\theta_*)]     + {8\frac{\kappa d-1}{  n^2}g(\theta_0)}.
\end{multline}
This bound depends on $\Vert \cdot \Vert_{\Sigma^{-1}}$ which may be infinite. For this reason we compare again  the noisy iterate $\theta_n$ to the noiseless iterate we still denote by $(\phi_n)$. We remind these iterates verify the recursion 
\[
\omega_n=\omega_{n-1}-\gamma \Sigma (\phi_{n-1}-\tnc).
\]
Therefore the difference $(\eta_n-\omega_n)$ satisfies the same form of recursion as $(\eta_n)$:
\[
\eta_n-\omega_n=\nabla \eta_{n-1}-\omega_{n-1}-\gamma x_n\otimes x_n (\theta_{n-1}-\phi_{n-1}) +\gamma\epsilon_n,
\]
with a different noise $\epsilon_n=\xi_n-  [x_n\otimes x_n -\Sigma] (\phi_{n-1}-\tnc)$ and  $0$ for initial value. Although the noise $\epsilon_n$ is different from $\xi_n$, its covariance  is still bounded by
\BEAS
 \frac{1}{3}\EE[\epsilon_n\otimes\epsilon_n]
 &\preccurlyeq &
   \EE[\xi_n\otimes\xi_n]
 + \EE[ [x_n\otimes x_n -\Sigma](\phi_{n-1}-\theta_*) \otimes (\phi_{n-1}-\theta_*)  [x_n\otimes x_n -\Sigma] ] \\
 && + \EE[ [x_n\otimes x_n -\Sigma](\theta_*-\tnc) \otimes (\theta_*-\tnc)  [x_n\otimes x_n -\Sigma] ] \\
  &\preccurlyeq &
  \EE[\xi_n\otimes\xi_n]
- \EE[ \Sigma(\phi_{n-1}-\theta_*)^{\otimes 2}\Sigma]
- \EE[ \Sigma(\theta_*-\tnc)^{\otimes 2}\Sigma]
 \\
 &&
 + \EE[ x_n\otimes x_n(\phi_{n-1}-\theta_*)^{\otimes 2}x_n\otimes x_n]
 + \EE[ x_n\otimes x_n(\theta_*-\tnc)^{\otimes 2}x_n\otimes x_n]
\\
&\preccurlyeq &
  \EE[\xi_n\otimes\xi_n]
 + (\kappa-1) \big(\Vert \phi_{n-1}-\theta_*\Vert_\Sigma^2 +\Vert \theta_*-\tnc\Vert_\Sigma^2\big) \Sigma,
\EEAS 
where we have use that for $z\in{\RR^d}$, $\EE \langle z,x_n\rangle^4\leq \kappa \langle z,\Sigma z\rangle$.
We may apply Proposition~\ref{prop:phifunction} and  obtain
\[
 \EE[\epsilon_n\otimes\epsilon_n] \preccurlyeq  3 C +\frac{6(\kappa-1)}{\gamma n} D_h(\theta_*,\theta_0)  \Sigma+6(\kappa-1)f(\theta_*) .
\]
Thereby Lemma~\ref{lemma:alphabetasto} can be applied with $\theta_{0}=\alpha_{0}$ and we get
\[
 {\EE \Vert \bar \theta_{n}-\bar \phi_{n}\Vert_\Sigma^2}
  \leq
  4 { \frac{\kappa d-1}{ n^2}\sum_{i=0}^{n-1} \EE \Vert \theta_{i}-\phi_{i}\Vert_\Sigma ^2}  +{\frac{8}{n}\Tr \Sigma^{-1}\EE[\epsilon_n\otimes\epsilon_n] }.
\]
As before we  apply Lemma~\ref{lemma:alphabetafunction} to have
\BEAS
  \sum_{i=0}^{n-1} \EE \Vert \theta_{i}-\phi_{i}\Vert_\Sigma ^2
     &\leq&
    \Bigg[ { 2\sum_{i=0}^{n-1}  \EE \Vert \theta_{i}-\theta_*\Vert_\Sigma ^2+2\sum_{i=0}^{n-1} \Vert \phi_{i}-\theta_*\Vert_\Sigma ^2}\Bigg]\\
    &\leq&
       \Bigg[ \frac{8D_h(\theta_*,\theta_{0}) }{\gamma}+\frac{n\gamma}{\mu_b} 4\tr \Sigma^{-b}C+\frac{16n\gamma \kappa \tr \Sigma^{1-b}}{\mu_b}f(\theta_*)+4{g(\theta_0)}\Bigg].
       \EEAS     
 Therefore
 \begin{multline*}
 {\EE \Vert \bar \theta_{n}-\bar \phi_{n}\Vert_\Sigma^2}
  \leq
  {  4 \frac{\kappa d-1}{n^2}}\Bigg[ 8\frac{D_h(\theta_*,\theta_{0}) }{\gamma}+\frac{n\gamma}{\mu_b} 4\tr \Sigma^{-b}C+\frac{16n\gamma \kappa \tr \Sigma^{1-b}}{\mu_b}f(\theta_*)+4{g(\theta_0)}\Bigg]\\
  +\frac{8}{{n}}\Bigg[{ 3\tr\Sigma^{-1}C +\frac{6(\kappa-1)}{\gamma n} D_h(\theta_*,\theta_0)d+6(\kappa-1)f(\theta_*)d}\Bigg].
\end{multline*}
And rearranging terms we obtain 
 \begin{multline*}
 {\EE \Vert \bar \theta_{n}-\bar \phi_{n}\Vert_\Sigma^2}
  \leq
  {  80 \frac{\kappa d}{\gamma n^2}} {D_h(\theta_*,\theta_{0}) }
  +\frac{64\kappa^2 d \gamma }{n \mu_b}\tr \Sigma^{1-b}f(\theta_*)
  +\frac{16\kappa d \gamma }{n \mu_b}\tr C \Sigma^{-b}   \\
  +\frac{16\kappa d}{n^2}g(\theta_0)
  +\frac{24}{n}\tr \Sigma^{-1}C
  +\frac{48\kappa d }{n}f(\theta_*).
\end{multline*}
And by the  Cauchy-Schwarz inequality (${\EE \Vert \bar \theta_{n}-\theta_*\Vert_\Sigma^2}
   \leq
   2{\EE \Vert \bar \theta_{n}-\bar\phi_n\Vert_\Sigma^2}+ 2{\EE \Vert\bar\phi_n-\theta_*\Vert_\Sigma^2}$)
    \begin{multline*}
 \frac{1}{2}{\EE \Vert \bar \theta_{n}-\theta_*\Vert_\Sigma^2}
  \leq
    { 2\frac{D_h(\theta_*,\theta_{0})}{\gamma n}}
      +\frac{24}{n}\tr \Sigma^{-1}C
   +\frac{16\kappa d \gamma }{n \mu_b}\tr C \Sigma^{-b}  
  +\frac{16\kappa d  }{n }\Big(\frac{4\kappa \gamma \tr \Sigma^{1-b}}{\mu_b}+3\Big)f(\theta_*)
  \\
   + {  80 \frac{\kappa d}{\gamma n^2}} {D_h(\theta_*,\theta_{0}) }
  +\frac{16\kappa d}{n^2}g(\theta_0),
\end{multline*}
which proves the second bound of Proposition~\ref{prop:thetastogen}.

\subsection{A corollary of Proposition~\ref{prop:thetastogen} for $h$ with an Euclidean behavior}

When $h$ rather behaves as an Euclidean norm, we may replace Assumptions \textbf{(A12-13)} by the following:
\begin{description}
 \item [\textbf{(A12')}] There exists $\mu_h>0$ such that $h-\frac{\mu_h}{2}\Vert\cdot\Vert_2^2$ is convex. 
 \item [\textbf{(A13 ')}] There exists $R^2$ such that $\EE [\Vert x_n\Vert_2^2 x_n\otimes x_n]\preccurlyeq R^2 \Sigma$.
\end{description}
And  Proposition  \ref{prop:thetastogen} implies the following corollary.
\begin{corollary}\label{cor:thetaeucli}
 Assume For any constant step-size $\gamma$  such that $\gamma \leq \min\{ \frac{\mu_h}{4\kappa R^2},  \frac{R^2}{4\kappa  d}\}$. Then 
 \begin{multline*}
  \frac{1}{2}{\EE \Vert \bar \theta_{n}-\theta_*\Vert_\Sigma^2}
  \leq
    { 2\frac{D_h(\theta_*,\theta_{0})}{\gamma n}}
  +\frac{8}{n}\Big(3+\frac{4\gamma \kappa R^2 }{\mu_h}\Big)\Big(\sigma^2d+\kappa d \Vert  \theta_{*}-\tnc\Vert_\Sigma^2\Big)
  +\frac{16\kappa d }{n^2}\Big( \frac{5D_h(\theta_*,\theta_{0})}{\gamma }+g(\theta_0)\Big).
   \end{multline*}
\end{corollary}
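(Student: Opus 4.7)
The plan is to derive Corollary~\ref{cor:thetaeucli} as an immediate specialization of Proposition~\ref{prop:thetastogen} to the choice $b = 0$, with Assumptions (A12')--(A13') playing the roles of (A13)--(A14) in the Euclidean-like regime.

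First I would match the hypotheses. Since $\Sigma^0 = I$, Assumption (A14) at $b=0$ is literally (A12') with $\mu_0 := \mu_h$. The proof of Proposition~\ref{prop:thetastogen} invokes (A11) to bound $T_b = \EE[\langle x,\Sigma^{-b}x\rangle\, x\otimes x]$; at $b = 0$ the resulting slack inequality $T_0 \preccurlyeq \kappa(\tr\Sigma)\Sigma$ can be upgraded by (A13') to the tighter $T_0 \preccurlyeq R^2\Sigma$. Substituting $\kappa\tr\Sigma^{1-b}\mapsto R^2$ wherever this quantity appears rewrites the step-size constraint $\gamma \leq \mu_b/(4\kappa\tr\Sigma^{1-b})$ as $\gamma \leq \mu_h/(4R^2)$ and reshapes the coefficient of $\Vert\theta_* - \tnc\Vert_\Sigma^2$ into the form $\tfrac{8\kappa d}{n}(3 + 4\gamma R^2/\mu_h)$. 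The remaining uses of $\kappa$---notably in the multiplicative-noise covariance estimate and in $T_1 \preccurlyeq \kappa d\,\Sigma$, which drives the martingale-square-moment step of Lemma~\ref{lemma:alphabetasto}---continue to rely on (A11) and furnish the residual factor $\kappa d$ in the final bound.

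Second, I would handle the companion constraint $\gamma \leq 1/(\kappa L d)$ from Proposition~\ref{prop:thetastogen}. Assumption (A13') entails $\Sigma \preccurlyeq R^2 I$ (test against a unit eigenvector of $\Sigma$); combined with $\nabla^2 h \succcurlyeq \mu_h I$ from (A12'), this supplies a valid smoothness constant $L$ in (A13), translating $1/(\kappa L d)$ into the second factor of the Corollary's step-size window. For the additive noise, (A10) yields $C \preccurlyeq \sigma^2\Sigma$ so that $\tr\Sigma^{-1}C \leq \sigma^2 d$, and the variance contribution $\tfrac{16\kappa d\gamma}{n\mu_h}\tr C$ regroups with the Mahalanobis term $\kappa d\Vert\theta_* - \tnc\Vert_\Sigma^2$ to produce the compact prefactor $\tfrac{8}{n}(3 + 4\gamma\kappa R^2/\mu_h)$ multiplying the composite quantity $\sigma^2 d + \kappa d\Vert\theta_* - \tnc\Vert_\Sigma^2$.

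The bias term $2D_h(\theta_*,\theta_0)/(\gamma n)$ and the $O(1/n^2)$ residual $\tfrac{16\kappa d}{n^2}\bigl(5 D_h(\theta_*,\theta_0)/\gamma + g(\theta_0)\bigr)$ carry over verbatim from Proposition~\ref{prop:thetastogen}. The only nontrivial aspect of the plan is the Euclidean calibration: verifying that (A12')--(A13') jointly fix both the strong-convexity parameter $\mu_0 = \mu_h$ and a usable smoothness constant $L$ in (A13), and that these constants line up correctly with the Corollary's step-size range. Once that is done, the corollary is a routine specialization and regrouping of the general bound.
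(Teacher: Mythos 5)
Your proposal is correct and follows exactly the route the paper intends: the paper gives no explicit proof beyond asserting that Proposition~\ref{prop:thetastogen} implies the corollary, and your specialization to $b=0$ (with (A12') supplying $\mu_0=\mu_h$ in (A14), (A13') giving $T_0\preccurlyeq R^2\Sigma$ in place of $\kappa(\tr\Sigma)\Sigma$, and $\tr\Sigma\leq R^2$ used to absorb the $\tr C\,\Sigma^{-b}$ term) is precisely the intended regrouping. The only caveat is that your calibration $L=R^2/\mu_h$ turns the constraint $\gamma\leq 1/(\kappa L d)$ into $\gamma\leq \mu_h/(\kappa R^2 d)$ rather than the corollary's stated $\gamma\leq R^2/(4\kappa d)$ --- a discrepancy that appears to be a typo in the paper's statement rather than a flaw in your argument.
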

This corollary would pave the way for a general result for larger step-size $\gamma$  without the condition $\gamma \leq    \frac{R^2}{4\kappa  d}$. Unfortunately the latter seems not improvable, as noted after Lemma  \ref{lemma:etasto}. 
\section{Lower bound for non-strongly convex quadratic regularization}\label{app:lowerquad}

We derive, in this section, a lower bound on the performance of SDA when $f$ is the linear form $f(\theta)=\langle a,\theta\rangle$ with $a\in{\RR^d}$ and $g$ is a non-strongly convex quadratic function. 
We assume that the vector $a$ is not available and we only have access to estimates of the gradient 
\begin{equation}\label{eq:oralin}
\nabla f_n(\theta)=a+\xi_n \text{ for } n\geq1,
\end{equation}
 where  $(\xi_n)$ is an uncorrelated zero-mean noise sequence  with bounded covariance.
\begin{proposition}\label{prop:lowerquad}
 For any $d\geq2$, $L>0$; $\gamma>0$ and  finite time horizon $N\geq1$, there exists a quadratic function $g$ $L$-smooth such that for any  uncorrelated zero-mean noise sequence $(\xi_n)$  with bounded covariance $\EE[\xi_n\otimes\xi_n]=\sigma^{2}L\idm_{d}$,  SDA with constant step-size $\gamma$ applied  with the oracle \eq{oralin}
 satisfies
 \[
  \psi(\bar \theta_N)-\psi(\theta_*)\geq \frac{\sigma^2 }{12}\min\{(L\gamma)^2,1\}.
 \]
\end{proposition}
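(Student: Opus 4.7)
My plan is to exhibit an explicit bad instance and analyse SDA in closed form. Take $d=2$, $a=0$ (so $f\equiv 0$), Euclidean $h(\theta)=\tfrac{1}{2}\|\theta\|_2^2$ with initialisation $\theta_0=0$ (hence $\eta_0=0$), and $g_N(\theta)=\tfrac{\lambda_N}{2}\theta_1^{\,2}$ with $\lambda_N\in(0,L]$ to be chosen below. Then $g_N$ is quadratic, $L$-smooth, non-strongly convex (flat along $e_2$), and $\theta_*=0$ with $\psi_N(\theta_*)=0$. Because $h_n=h+n\gamma g_N$ is diagonal, SDA factorises coordinate-wise and the first coordinate satisfies $\theta_n^{(1)} = -\gamma S_n^{(1)}/(1+n\gamma\lambda_N)$ with $S_n^{(1)}=\sum_{k=1}^n \xi_k^{(1)}$, while the second coordinate is irrelevant to $\psi_N$.

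Since $\psi_N$ depends only on $\theta_1$, $\psi_N(\bar\theta_N)-\psi_N(\theta_*) = \tfrac{\lambda_N}{2}(\bar\theta_N^{(1)})^{2}$, and expanding the square and using $\EE[\xi_j^{(1)}\xi_k^{(1)}]=\sigma^{2}L\,\mathbf{1}_{j=k}$ yields the exact formula
\[
  \EE\bigl[\psi_N(\bar\theta_N) - \psi_N(\theta_*)\bigr] \;=\; \frac{\lambda_N\,\gamma^{2}\sigma^{2}L}{2N^{2}}\sum_{n,m=0}^{N-1}\frac{\min(n,m)}{(1+n\gamma\lambda_N)(1+m\gamma\lambda_N)}.
\]
The combinatorial ingredient is the identity $\sum_{n,m=0}^{N-1}\min(n,m) = N(N-1)(2N-1)/6$, which is cubic in $N$, while on the subrange of indices where $n\gamma\lambda_N\leq 1$ both denominators are bounded by $2$. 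The strategy is to tune $\lambda_N$ so that after the truncation the sum retains a cubic contribution that, once multiplied by the prefactor $\lambda_N/(2N^{2})$, reproduces the target lower bound.

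I then split into two regimes. When $L\gamma\leq 1$, I take $\lambda_N=L$ and restrict the double sum to $n,m\leq\min(N-1,\lfloor 1/(\gamma L)\rfloor)$, on which the denominators are at most $2$; combining the cubic combinatorial estimate with the prefactor $L\gamma^{2}$ produces a lower bound of the form $\sigma^{2}(L\gamma)^{2}/12$. When $L\gamma>1$, I instead take $\lambda_N=1/(N\gamma)\leq L$, so that $n\gamma\lambda_N\leq 1$ throughout the whole range and the full cubic sum contributes; the prefactor $\lambda_N/2=1/(2N\gamma)$ then cancels one factor of $N$ and yields the dimensionless lower bound $\sigma^{2}/12$.

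The main obstacle is extracting the precise constant $1/12$: a crude use of the bounds (denominators uniformly $\leq 2$, $\sum\min(n,m)\gtrsim N^{3}/6$) only yields something like $\sigma^{2}/48$ in each regime, and recovering $1/12$ requires finer control—either restricting the summation further to the subrange where $\min(n,m)$ is close to $N$ and the denominators are close to $1$, or carefully optimising the cutoff defining $\lambda_N$ against $N\gamma L$. It is also essential that $g_N$ is allowed to depend on $N$: for a fixed quadratic $g$ the damping factor $(1+n\gamma\lambda)^{-1}$ eventually kills the accumulated noise variance and SDA converges, so no $N$-uniform constant lower bound can survive without an adaptive regulariser.
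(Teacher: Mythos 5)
Your skeleton is essentially the paper's: the paper also takes $d=2$, $a=0$, a diagonal quadratic $g(\theta)=\frac{1}{2}\langle\theta,A\theta\rangle$ with $A=\diag(L,\mu)$, solves SDA in closed form, and reduces everything to the same double sum (written there as $\sum_j\big(\sum_{k\ge j}(1+\gamma k\mu)^{-1}\big)^2$, which is your $\sum_{n,m}\min(n,m)/[(1+n\gamma\mu)(1+m\gamma\mu)]$ up to indexing). Your rank-one variant $g_N=\frac{\lambda_N}{2}\theta_1^2$ and the exact variance formula are fine, and your second regime ($L\gamma>1$, $\lambda_N=1/(N\gamma)$) is sound. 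The genuine gap is the first regime. With $\lambda_N=L$ fixed independently of $N$, the quantity you must lower-bound is in fact
\[
\frac{L^2\gamma^2\sigma^2}{2N^2}\sum_{n,m=0}^{N-1}\frac{\min(n,m)}{(1+n\gamma L)(1+m\gamma L)}
\;\le\;\frac{L^2\gamma^2\sigma^2}{2N^2}\cdot\frac{1}{\gamma L}\sum_{n,m=0}^{N-1}\frac{1}{1+\gamma L\max(n,m)}
\;=\;O\Big(\frac{\sigma^2}{N}\Big),
\]
since $\min(n,m)/(1+\gamma L\min(n,m))\le 1/(\gamma L)$ and the remaining sum is $O(N/(\gamma L))$. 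So for $N\gg(\gamma L)^{-2}$ the true value sits strictly below the target $\sigma^2(L\gamma)^2/12$, and no lower-bounding trick can recover it; in particular your truncation to $n,m\le\lfloor 1/(\gamma L)\rfloor$ caps the cubic sum at $(\gamma L)^{-3}$ while the prefactor still carries $N^{-2}$, yielding only $O(\sigma^2/(\gamma L N^2))\to0$. This is precisely the damping phenomenon you correctly diagnose in your closing paragraph, but your case $L\gamma\le1$ does not implement the remedy.

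The fix is to make the curvature adaptive in \emph{both} regimes: take $\lambda_N=L/N$ (the paper's choice $\mu=L/N$), or equivalently $\lambda_N=\min\{L,1/(N\gamma)\}$ with the case split governed by $N\gamma L\lessgtr1$ rather than $\gamma L\lessgtr1$. With $\lambda_N=L/N$ every denominator is at most $1+\gamma L$, the full cubic sum $N(N-1)(2N-1)/6$ survives, and a single computation gives a bound of the form $c\,\sigma^2(L\gamma)^2/(1+\gamma L)^2\ge \frac{c}{4}\sigma^2\min\{(L\gamma)^2,1\}$. Do not agonize over recovering exactly $1/12$: the paper's own derivation of that constant is loose (it invokes $\sum_{j=1}^n(n-j)^2\ge n^3/3$, which fails for finite $n$, and elides the factor $1/2$ between $\langle\bar\theta,A\bar\theta\rangle$ and $\psi$), so the substantive content is the dependence $\sigma^2\min\{(L\gamma)^2,1\}$ with some universal constant.
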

\begin{proof}
For sake of clarity, we consider $d=2$ and $a=0$. Thus $f(\theta)=\EE \langle \xi_n,\theta\rangle=0$. Let  $g(\theta)=\frac{1}{2}\langle \theta,A\theta\rangle$
be a quadratic form with $A=\begin{pmatrix} L & 0\\ 0& \mu \end{pmatrix}$ for $L\geq\mu>0$ with $\mu$ possibly arbitrary small. The noise $(\xi_n)$ is assumed to be uncorrelated zero-mean with bounded covariance $\EE[\xi_n\otimes\xi_n]=\sigma^{2}L\idm_{2}$. The stochastic dual algorithm with step-size $\gamma$ takes the form:
\BEAS
 \theta_n
 &=&
\nabla h_n^*(-n\gamma \bar \xi_n)\\
&=& \argmin_{\theta\in{\RR^d}} \Big\{ \langle \bar \xi_n ,\theta\rangle+\frac{1}{2}\langle \theta,A\theta\rangle+\frac{1}{2n\gamma }\Vert \theta \Vert_2^2\Big\}\\
&=&\gamma n(\idm +\gamma nA)^{-1}\bar \xi_n.
\EEAS
And 
\BEAS
\bar \theta_n
&=&
\frac{\gamma}{n}\sum_{k=1}^{n-1} \sum_{j=1}^k k(\idm +\gamma kA)^{-1}\frac{1}{k}\xi_j \\
&=&
\frac{\gamma}{n}\sum_{j=1}^{n-1} \Big(\sum_{k=j}^{n-1} (\idm+\gamma kA)^{-1}\Big)\xi_j.
\EEAS
Therefore using standard martingale square moment inequalities 
\BEAS
 \EE \langle\bar \theta_n, A \bar \theta_n\rangle  
 &=&
 \frac{\gamma^{2}}{n^2} \sum_{j=1}^n \EE \Big\langle \xi_j \Big(\sum_{k=j}^n (\idm+\gamma kA)^{-1}\Big),A \Big(\sum_{k=j}^n (\idm+\gamma kA)^{-1}\Big)\xi_j\Big\rangle\\
 &=& 
 \frac{\gamma^{2}\sigma^{2}L}{n^2}  \tr  \sum_{j=1}^n  \Big(\sum_{k=j}^n (\idm+\gamma kA)^{-1}\Big)A \Big(\sum_{k=j}^n (\idm+\gamma kA)^{-1}\Big) \idm_{2}\\
 &=& 
   \frac{\gamma^{2}\sigma^{2}L}{n^2} \sum_{j=1}^n  \Big[ L \Big(\sum_{k=j}^n \frac{1}{1 +\gamma L k}\Big)^{2}+ \mu \Big(\sum_{k=j}^n \frac{1}{1 +\gamma \mu k}\Big)^{2}\Big].
 \EEAS 
 And 
 \BEAS
 \EE \langle\bar \theta_n, A \bar \theta_n\rangle  
   &\geq&
   \frac{\gamma^{2}\sigma^{2}L}{n^2}   \Big[  \frac{L}{(1 +\gamma L n)^{2}}+ \frac{\mu}{(1 +\gamma \mu  n)^{2}}\Big] \sum_{j=1}^n (n-j)^{2} \\ 
   &\geq&
   \frac{n\sigma^{2}\gamma^{2}L}{3}   \Big[  \frac{L}{(1 +\gamma L n)^{2}}+ \frac{\mu}{(1 +\gamma \mu  n)^{2}}\Big]    \geq  \frac{n\sigma^{2}\gamma^{2}}{3} \frac{\mu}{(1 +\gamma \mu  n)^{2}} \\
   &\geq&
   \frac{\sigma^{2}L}{12} \min \Big( n \mu \gamma^{2},\frac{1}{\mu n } \Big) .
\EEAS
Conclude by taking $\mu=L/N$.

The proof is the same for $d\geq2$ by considering $A=\diag(L,\dots,L,L\mu)$ with $d-1$ $L$.
\end{proof}

\section{Lower bound for stochastic approximation problems}\label{app:loweroracle}

In this section we relate the problem of aggregation of estimators to the stochastic convex optimization problem, i.e., minimizing a convex function, given only unbiased estimates of its gradients. We will consider the regression and the classification with hinge loss problems which will individually provide lower bounds for quadratic and linear functions. We follow here \citet{tsybakov2003optimal,Lec06,agarwal2010information}.

\subsection{Oracle complexity of stochastic convex optimization}

Beforehand we describe the stochastic oracle model formalism as done by \citet{NemYud83, agarwal2010information, RagRak11}. For a given class of problems we aim to determine lower bounds on the number of queries to a stochastic first-order oracle  needed to optimize to a certain precision any function in this class. To this end we have the following definition.
\begin{definition}[\cite{agarwal2010information}]
For a given constraint convex set $\mathcal C$, and a function class $\mathcal S$, a first-order stochastic oracle is a random mapping $\pi: \mathcal C \times \mathcal S \to \RR\times {\RR^d} $ of the form 
\[
 \phi(\theta, f)=(\tilde f(\theta), g(\theta)), 
\]
such that 
\[
 \EE \tilde f (\theta)=f(\theta); \qquad \ \EE g(\theta)=\nabla  f(\theta),
\]
and there exists a constant $C<\infty$ such that for every $\theta\in{\RR^d}$
\[
 \EE[\Vert  g(\theta)- \nabla f(\theta) \Vert^2]\leq C(1+\Vert\theta\Vert^2).
\]

\end{definition}
The class of first-order stochastic oracle is denoted by $\Phi $. A stochastic approximation algorithm $M$ is a method which approximately minimizes a function $f$ by querying, at each iteration $i$, the oracle at the point $\theta_i$. The oracle answers with the information $\phi(\theta_i,f)$ and the method uses all the information $\{\phi(\theta_0,f),\dots,\phi(\theta_i,f)\}$ to build a new point $\theta_{i+1}$. For $n\in\NN$ we denote by $\mathcal M_n$ the class of all such methods that are allowed to make $n$ queries. As done by \citet{agarwal2010information}, we denote the error of the method $M$ on the function $f$ after $n$ steps as 
\[
\epsilon_n(M,f,\mathcal{C},\phi)= f(\theta_n)-\min_{\theta\in\mathcal C} f(\theta).
\]
Given a class of functions $\mathcal{S}$, an oracle $\phi$ and a convex constraint set $\mathcal{C}$, \citet{agarwal2010information} also  defines the minimax error as
\[
 \epsilon_n^*(\mathcal S,\mathcal{C},\phi)= \inf_{M\in \mathcal M_n} \sup_{f\in\mathcal S} \EE_{\phi} \epsilon_n(M,f,\mathcal{C},\phi).
\]

We will lower bound this minimax error by relating convex stochastic approximation with convex aggregation of estimators \citep{JudNem00,tsybakov2003optimal}.

\subsection{Convex aggregation of estimators }
Let $(\mathcal{X},\mathcal{A})$ be a measurable space and $\mathcal Y\subset \RR$ . We consider random variables $(X,Y)$ on $\mathcal{X}\times \mathcal{Y}$ with probability distribution denoted by $\pi$. We observe $n$ i.i.d. pairs $D_n=\{(X_1,Y_1),\dots, (X_n,Y_n)\}$ which follow the law $\pi$ and we want to predict the output $Y$ for any feature $X\in\mathcal{X}$ by a prediction $f(X)$ for a measurable function $f$ from $\mathcal{X}$ to $\RR$. For this purpose we want to minimize the risk defined by
\[
 A(f)=\EE [\ell(f(X),Y)],
\]
for any measurable function $f$ from $\mathcal{X}$ to $\RR$ and $\ell: \mathcal Y \times \mathcal Y\to \RR$ a loss function.

We consider we have access to $d$ different arbitrary estimators $\mathcal{F}=\{f_1,\dots, f_d\}$ with values in $\mathcal Y$. We denote their convex hull by $\mathcal{C}=conv(f_1,\dots,f_d)$. The aim of convex aggregation is to build a new estimator  which is a convex combination of the different $f_i$ and behaves as the best among the estimators  $f_i$.
The aggregation problem is equivalent to a minimization problem over the simplex $\Delta_d$ since for $f\in\mathcal{C}$ there is $\theta\in\Delta_d$ such that $f=\sum_{i=1}^d\theta(i) f_i$. Therefore, defining $B(\theta)=A\Big(\sum_{i=1}^d\theta(i) f_i\Big)$, we have  
\[
 \min_{f\in\mathcal{C}} A(f)= \min_{\theta\in\Delta_d} B(\theta).
\]
We denote by $F:\mathcal X \to \RR^d, x\mapsto (f_1(x),\dots, f_d(x)) $ the function whose the $i$th coordinate is the function $f_i$, and we have
\[
 B(\theta)= \EE [\ell (\langle F(X), \theta\rangle,Y)].
\]
Therefore the convex aggregation problem of minimizing $A(f)$ over the convex hull of $\mathcal F$ is formally equivalent to the stochastic approximation problem of minimizing, over the simplex $\Delta_d$, the function $B(\theta)=\EE [\ell (\langle F(X), \theta\rangle,Y)]$, given only unbiased estimates of its gradient $\nabla B_n(\theta)=\nabla \ell(\langle F(x_n),\theta\rangle,y_n)$. Hence lower bounds on convex aggregation problems provide lower bounds on stochastic approximation problems studied in this paper.

\subsection{Aggregation in regression and application to oracle complexity of stochastic quadratic optimization}

We first consider the regression problem for which $\mathcal{Y}=\RR$. We rely substantially on \citet{tsybakov2003optimal}. The regression model is 
\[
 Y_i=f_*(X_i)+\xi_i, \text{ for } i=1,\dots, n,
\]
where $X_1,\dots,X_n$ are i.i.d.~random vectors of $\mathcal{X}$ of law $P^X$ and $\xi_i$ are i.i.d.~ Gaussian $\mathcal{N}(0,\sigma^2)$ random variables such that $(\xi_1,\dots, \xi_n)$ is independent of $(X_1,\dots, X_n)$ and $f_*:\mathcal X\to \RR$ is the regression function. Regression problem aims to estimate the unknown regression function $f_*$ based on the data $D_n$ by minimizing the risk
\[
 A_{\text{reg}}(f,f_*)=\EE (f(X)-f_*(X))^2.
\]
The problem of the optimal rate of convex aggregation has been studied by \citet{tsybakov2003optimal}. We reintroduce his notations and assumptions for sake of completeness.

Let denote by $\mathcal{F}_0=\{f: \Vert f\Vert_\infty \leq L\}$ for $L>0$ and assume that 
\begin{description}
\item[\textbf{(B1)}]
There exists a cube $S\subset \mathcal X$ such that $P^X$ admits a bounded density $\mu$ on $S$ w.r.t. the Lebesgue measure and $\mu(x)\geq\mu_0>0$ for all $x\in S$.
\item[\textbf{(B2)}]
There exists a constant $c_0$ such that $ d \leq c_0 \exp(n)$.
\end{description}
We have the following result
\begin{theorem}[Theorem 2, \citet{tsybakov2003optimal}]\label{prop:aggreg}
 Under assumptions \textbf{(B1-2)} we have
 \[
  \sup_{f_1,\dots,f_d\in\mathcal F_0}\inf_{T_n} \sup_{f_*\in\mathcal{F}_0}\big[ \EE_{D_n} A_{\text{reg}}(T_n,f_*)-\min_{f\in\mathcal C} A_{\text{reg}}(f,f_*)\big]\geq c \zeta_n(d),
 \]
for some constant $c>0$ and any integer $n$, where $\inf_{T_n}$ denotes the infimum over all estimators, $\EE_{D_n}$ denotes the expectation with regard to the probability distribution of the data $D_n$ and
 \[
    \zeta_n(d)=\begin{cases}
                d/n & \text{ if } d\leq \sqrt{n} \\
                \sqrt{\frac{1}{n}\log\big( \frac{d}{\sqrt n} +1\big)} & \text{ if } d> \sqrt{n}.
               \end{cases}
  \]
\end{theorem}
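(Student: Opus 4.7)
\medskip

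\textbf{Proof plan.} The statement is a classical minimax lower bound in the style of Tsybakov's aggregation theory, so my plan is to reduce it to a many-hypothesis testing problem and bound the testing error from below with information-theoretic inequalities.

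\emph{Step 1: Construction of a hard dictionary.} Fix the cube $S \subset \mathcal{X}$ on which the marginal $P^X$ has density bounded below by $\mu_0$. Partition $S$ into $d$ disjoint sub-cubes of equal Lebesgue volume and define $f_j = L \cdot \mathbbm{1}_{S_j}$ for $j=1,\dots,d$. These candidates are in $\mathcal{F}_0$, and for $\theta, \theta' \in \Delta_d$ the associated mixtures $F_\theta = \sum_j \theta(j) f_j$ satisfy $\|F_\theta - F_{\theta'}\|_{L^2(P^X)}^2 \geq c_1 L^2 \|\theta - \theta'\|_2^2 / d$ for some constant $c_1 > 0$ depending on $\mu_0$ and $|S|$. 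Then the excess risk $A_{\mathrm{reg}}(T_n, F_\theta) - \min_{f \in \mathcal{C}} A_{\mathrm{reg}}(f, F_\theta)$ is equal to $\|T_n - F_\theta\|_{L^2(P^X)}^2$ when the true regression function itself lies in $\mathcal{C}$, which is the case by construction when I take $f_* = F_{\theta^*}$.

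\emph{Step 2: Two regimes via Assouad/Fano.} In the ``dense'' regime $d \leq \sqrt{n}$, I would apply Assouad's lemma with the hypercube $\Theta = \{\tau/d : \tau \in \{0,1\}^d \}$ rescaled by a small radius $\rho_n \asymp \sqrt{d/n}$, so that neighbouring hypotheses are $\rho_n / d$-apart in $\ell_2$ coordinate by coordinate. The Gaussian noise gives $\mathrm{KL}(P_{\theta}^{\otimes n} \| P_{\theta'}^{\otimes n}) \leq (n/(2\sigma^2)) \|F_\theta - F_{\theta'}\|_{L^2(P^X)}^2$, which for neighbours is bounded by a constant. Assouad then yields a lower bound of order $d \cdot (\rho_n^2 / d) = \rho_n^2 \asymp d/n$. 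In the ``sparse'' regime $\sqrt{n} < d \leq c_0 e^n$, I would apply Fano's inequality to a packing $\{\theta^{(1)},\dots, \theta^{(M)}\}$ of the scaled simplex consisting of sparse convex combinations of cardinality $k \asymp \sqrt{n \log(d/\sqrt{n})}/\log(d/\sqrt{n})$; the Varshamov--Gilbert lemma delivers $\log M \gtrsim k \log(d/k)$, and balancing the mutual information (at most $n \rho_n^2 / \sigma^2$) against $\log M$ forces $\rho_n^2 \gtrsim n^{-1} \log(d/\sqrt{n} + 1)$.

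\emph{Step 3: Conclusion by the reduction.} Once I have the packing and the KL bound, the standard Tsybakov reduction (Theorem~2.5 in his minimax book) gives that the minimax $L^2$-risk over the finite family is at least a constant multiple of $\rho_n^2$. Since this family is inside $\mathcal{C}$, the minimax excess-risk over $\mathcal{F}_0$ is at least $c \zeta_n(d)$ with $\zeta_n$ as stated.

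\emph{Main obstacle.} The delicate point is the sparse regime: one must simultaneously (a) guarantee that the sparse combinations remain in $\Delta_d$ (or in a rescaling tolerated by the boundedness constraint $\mathcal{F}_0$), (b) preserve the $L^2(P^X)$-separation in proportion to the Hamming separation of the supports, and (c) keep the pairwise KL divergences small enough for Fano. The construction with indicator dictionaries makes (b) transparent because disjoint supports make cross-terms vanish, but getting the logarithmic factor $\log(d/\sqrt{n})$ sharp requires the Varshamov--Gilbert packing of $k$-sparse binary vectors rather than a crude hypercube, which is the technically heaviest piece.
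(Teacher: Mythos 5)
First, a remark on scope: the paper does not prove this statement at all --- it is imported verbatim as Theorem~2 of \citet{tsybakov2003optimal} --- so there is no internal proof to compare against. Your overall strategy (reduce to a many-hypothesis test inside $\mathcal{C}$, Assouad on a perturbed hypercube when $d\le\sqrt n$, Fano on a Varshamov--Gilbert packing of $k$-sparse convex combinations when $d>\sqrt n$, Gaussian KL bookkeeping) is exactly the route Tsybakov takes, and the skeleton of Steps~2--3 is sound.

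However, Step~1 contains a genuine gap that propagates through the rate calculation. With $f_j = L\,\mathbbm{1}_{S_j}$ on disjoint cells of $P^X$-measure $\asymp 1/d$, your own inequality gives $\Vert F_\theta - F_{\theta'}\Vert_{L^2(P^X)}^2 \asymp L^2\Vert\theta-\theta'\Vert_2^2/d$, and since $\Vert\theta-\theta'\Vert_2\le \Vert\theta\Vert_1+\Vert\theta'\Vert_1=2$ on the simplex, the whole family $\{F_\theta:\theta\in\Delta_d\}$ has squared $L^2$-diameter $O(L^2/d)$. For $d\gg\sqrt n$ this is far below the target separation $\sqrt{\log(d/\sqrt n+1)/n}$, so the family cannot even contain two hypotheses at the required distance; in the dense regime the simplex constraint forces per-coordinate amplitudes at most $1/d$, and Assouad then yields only $O(L^2/d^2)$, not $d/n$ (your displayed arithmetic $d\cdot(\rho_n^2/d)=\rho_n^2$ silently drops the extra $1/d$ coming from $\Vert f_j\Vert_{L^2(P^X)}^2\asymp L^2/d$, and the set $\{\tau/d\}$ does not lie in $\Delta_d$). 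The fix is the one Tsybakov actually uses: take $f_j=\gamma\varphi_j$ for a \emph{uniformly bounded orthonormal} (or near-orthonormal) system on the cube $S$ --- Walsh/trigonometric-type functions, whose non-degeneracy under $P^X$ is precisely what \textbf{(B1)} buys --- so that $\Vert f_j\Vert_\infty\lesssim L$ and $\Vert f_j\Vert_{L^2(P^X)}\asymp L$ hold simultaneously and $\theta\mapsto F_\theta$ is an $\ell_2$-isometry up to constants. With that dictionary, and with the dense-regime perturbations centred at the barycenter $(1/d,\dots,1/d)$ using zero-sum sign patterns so as to remain in $\Delta_d$ (this is exactly where $d\le\sqrt n$ enters), your Steps~2--3 go through and deliver $c\,\zeta_n(d)$ as stated.
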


We relate now the problem of convex aggregation of regression functions to the problem of stochastic quadratic functions optimization.
Consider $\mathcal F=\{f_{1},\dots, f_{d}\}$ the set of estimators given by Proposition~\ref{prop:aggreg} and denote by $F:\mathcal X \to \RR^d, x\mapsto (f_1(x),\dots, f_d(x)) $.
For $f\in \mathcal C$, there is  $\theta\in\Delta_d$ such that $f=\sum_{i=1}^d\theta(i) f_i$ and we obtain 
\[
 A_{\text{reg}}(f,f_*)=\EE[(\langle \theta,F(X)\rangle -f_*(X))^2]= B(\theta),
\]
where $B(\theta)=\langle \theta, \EE[ F(X)\otimes F(X)],\theta\rangle-2\langle \theta \EE[f_*(X)F(X)]\rangle +\EE[f_*(X)^2]$ is a quadratic function. This set enables us to construct a difficult subclass of quadratic functions:
\[
 \mathcal{G}_{\text{quad}}=\Big\{ B(\theta)=\frac{1}{2}\EE[(\langle \theta,F(X)\rangle -f_*(X))^2]; f_*\in\mathcal F_0\Big\}.
\]
We also define the first-order stochastic oracle $\phi_{\text{quad}}$ on   $\mathcal{G}_{\text{quad}}$ as follows
\[
 \phi_{\text{quad}}(\theta,f)=\bigg(\frac{1}{2}(\langle \theta,F(x) \rangle -f_*(x))^2, (\langle \theta,F(x) \rangle -f_*(x))F(x)\bigg),\text{ for } x\sim P^X.
\]
We can optimize $B$ with a stochastic approximation algorithm $M\in\mathcal M_n$ to obtain  $\theta_n\in\Delta_d$ and therefore build a estimator $T_n=\sum_{i=1}^d \theta_n(i)f_i$ which belongs to $\mathcal C$. Moreover we have
\[
 A_{\text{reg}}(T_n,f_*)= B(\theta_n)\text{ and } \min_{f\in\mathcal C} A_{\text{reg}}(f,f_*)=\min_{\theta\in\Delta_d}B(\theta).
\]
Consequently, for the oracle $\phi_{\text{quad}}$ and the class $ \mathcal{G}_{\text{quad}}$ Proposition~\ref{prop:aggreg} implies that 
\begin{equation}
  \epsilon_n^*(\mathcal{G}_{\text{quad}},\Delta_d,\phi_{\text{quad}}) \geq  c \zeta_n(d).
\end{equation}
And we have proven the following minimax oracle complexity.
\begin{proposition}\label{prop:oraclequad}
 Let $\Delta_d$ be the simplex. Then there exists universal constants $c_0>0$ and $c>0$ such that the minimax oracle complexity over the class $\mathcal{S}_\text{quad}$ of quadratic functions satisfies the following lower bounds:
 \begin{itemize}
  \item For $d\leq \sqrt{n}$
  \[
   \sup_{\phi\in\Phi} \epsilon_n^*(\mathcal{S}_\text{quad},\Delta_d,\phi) \geq c \frac{d}{n}.
  \]
 \item For $ \sqrt{n}\leq d \leq c_0\exp(n)$
  \[
   \sup_{\phi\in\Phi} \epsilon_n^*(\mathcal{S}_\text{quad},\Delta_d,\phi) \geq c   \sqrt{\frac{1}{n}\log\Big( \frac{d}{\sqrt n} +1\Big)}.
  \]
 \end{itemize}
\end{proposition}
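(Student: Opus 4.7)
The plan is to transfer Theorem~\ref{prop:aggreg} on convex aggregation in regression to the stochastic optimization setting, via the reduction sketched just before the proposition statement.

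First, I would pick the oracle carefully. Rather than the oracle $\phi_{\text{quad}}$ of the sketch, which uses $f_*(X)$ directly, I would use the noisier oracle built from the regression data $(X_i,Y_i)$ with $Y_i=f_*(X_i)+\xi_i$: at the $i$-th query return
\[
\tilde\phi_i(\theta) = \bigl(\tfrac{1}{2}(\langle\theta,F(X_i)\rangle - Y_i)^2,\; (\langle\theta,F(X_i)\rangle - Y_i)\, F(X_i)\bigr).
\]
Unbiasedness of the gradient estimator for $\nabla B(\theta)$ follows from $\EE[Y_i\mid X_i]=f_*(X_i)$ and the independence of $\xi_i$ from $X_i$; the value estimate differs from $B(\theta)$ by the irrelevant constant $\sigma^2/2$. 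The uniform bounds $\|F\|_\infty,\|f_*\|_\infty\le L$ together with $\theta\in\Delta_d$ immediately yield a gradient variance bounded by a constant multiple of $(1+\|\theta\|^2)$, so $\tilde\phi\in\Phi$.

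Second, I would implement the reduction. Any algorithm $M\in\mathcal{M}_n$ applied to $\tilde\phi$ is a measurable function of the data $D_n=\{(X_i,Y_i)\}_{i=1}^n$, so the output $\theta_n\in\Delta_d$ defines a valid aggregation estimator $T_n=\sum_{j=1}^d\theta_n(j)f_j\in\mathcal{C}$, and by construction
\[
A_{\text{reg}}(T_n,f_*)-\min_{f\in\mathcal{C}}A_{\text{reg}}(f,f_*)\;=\;B(\theta_n)-\min_{\theta\in\Delta_d}B(\theta)\;=\;\epsilon_n(M,B,\Delta_d,\tilde\phi).
\]
Since the aggregation infimum ranges over all estimators based on $D_n$, taking expectation and supremum over $f_*\in\mathcal{F}_0$ gives
\[
\inf_{T_n}\sup_{f_*\in\mathcal{F}_0}\bigl[\EE_{D_n} A_{\text{reg}}(T_n,f_*)-\min_{f\in\mathcal{C}}A_{\text{reg}}(f,f_*)\bigr]\;\le\;\epsilon_n^*(\mathcal{G}_{\text{quad}},\Delta_d,\tilde\phi).
\]
Theorem~\ref{prop:aggreg} applied with $P^X$ uniform on a cube (giving \textbf{(B1)}) and under the constraint $d\le c_0\exp(n)$ (giving \textbf{(B2)}) lower-bounds the left-hand side by $c\,\zeta_n(d)$.

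Third, since $\mathcal{G}_{\text{quad}}\subset\mathcal{S}_{\text{quad}}$ and $\tilde\phi\in\Phi$,
\[
\sup_{\phi\in\Phi}\epsilon_n^*(\mathcal{S}_{\text{quad}},\Delta_d,\phi)\;\ge\;\epsilon_n^*(\mathcal{S}_{\text{quad}},\Delta_d,\tilde\phi)\;\ge\;\epsilon_n^*(\mathcal{G}_{\text{quad}},\Delta_d,\tilde\phi)\;\ge\;c\,\zeta_n(d),
\]
and substituting the two regimes of $\zeta_n(d)$ yields the two displays of the proposition. The only genuine verification beyond citing Theorem~\ref{prop:aggreg} is the oracle variance bound, which is immediate from boundedness. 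The main conceptual pitfall is getting the direction of the inequality right: one must use an oracle at least as weak as the aggregation data, not the noiseless $\phi_{\text{quad}}$ of the sketch, which is legitimate precisely because the proposition takes a supremum over $\phi\in\Phi$.
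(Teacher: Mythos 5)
Your proposal is correct and follows the same reduction as the paper: transfer Tsybakov's convex-aggregation lower bound (Theorem~\ref{prop:aggreg}) to the oracle model by identifying the aggregation risk $A_{\text{reg}}(f,f_*)$ over $\mathcal{C}$ with the quadratic objective $B(\theta)$ over $\Delta_d$, turning any $n$-query method $M$ into an aggregation estimator $T_n=\sum_j\theta_n(j)f_j$, and chaining $\sup_{\phi\in\Phi}\epsilon_n^*(\mathcal{S}_{\text{quad}},\Delta_d,\phi)\geq \epsilon_n^*(\mathcal{G}_{\text{quad}},\Delta_d,\cdot)\geq c\,\zeta_n(d)$. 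The one place where you genuinely depart from the paper is the choice of oracle, and your choice is the more careful one. The paper's oracle $\phi_{\text{quad}}$ returns $(\tfrac12(\langle\theta,F(x)\rangle-f_*(x))^2,(\langle\theta,F(x)\rangle-f_*(x))F(x))$ with $x\sim P^X$, i.e.\ it exposes the \emph{noiseless} value $f_*(x_i)$; an algorithm querying it produces an output measurable with respect to $\sigma(x_i,f_*(x_i))$, which is not a sub-$\sigma$-field of $\sigma(D_n)$, so such an output is not among the estimators over which the $\inf_{T_n}$ of Theorem~\ref{prop:aggreg} ranges (and indeed the information-theoretic hardness there rests on the Gaussian noise in $Y_i$). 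Your oracle $\tilde\phi$, built from $(X_i,Y_i)$, is simulatable from $D_n$, which is exactly what makes the inequality $\inf_{T_n}\sup_{f_*}[\cdot]\leq\epsilon_n^*(\mathcal{G}_{\text{quad}},\Delta_d,\tilde\phi)$ legitimate; the price is the constant offset $\sigma^2/2$ in the value estimate and an unbounded (Gaussian) component in the gradient noise, both of which you handle correctly (the offset does not affect excess risk, and the second moment remains bounded since $\|F\|_\infty\leq L$ and $\theta\in\Delta_d$). So your write-up is not merely a restyling: it closes a real measurability gap in the paper's own reduction while reaching the same conclusion.
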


We note that without assumption on $d$ the lower-bound for the class of  quadratic functions is of order $O(1/n)$ but in high-dimensional settings it becomes of order $(1/\sqrt{n})$. Nevertheless we will see in the next section this lower-bound is always of order $(1/\sqrt{n})$ for the class of linear functions. 

\subsection{Aggregation in classification and application to oracle complexity of stochastic linear optimization}

We consider now the classification problem with the hinge loss for which $\mathcal{Y}=\{-1,1\}$. We follow very closely the framework of \citet{Lec06,Lec07} and use their notations. We still consider random variables $(X,Y)$ on $\mathcal{X}\times \mathcal{Y}$ with probability distribution denoted by $\pi$. We observe $n$ i.i.d. pairs $D_n=\{(X_1,Y_1),\dots, (X_n,Y_n)\}$ which follow the law $\pi$ and we want to predict the label $Y$ for any feature $X\in\mathcal{X}$ by minimizing the hinge risk defined by
\[
 A_\text{cla}(f)=\EE\max(1-Yf(X),0),
\]
for any measurable function $f$ from $\mathcal{X}$ to $\RR$.
 We consider we have access to $d$ different estimators $\mathcal{F}=\{f_1,\dots, f_d\}$ with values in $[-1,1 ]$. We denote their convex hull by $\mathcal{C}=conv(f_1,\dots,f_d)$.
\citet[Theorem 1]{Lec06} and \citet[Theorem 2]{Lec07} provide a lower bound on  this aggregation problem for classification we adapt to our specific case.
\begin{proposition}[ Adaptation of Theorem 2  of \citet{Lec07}\label{theo:lecu} for $\kappa=\infty$]\label{prop:lowerlin}
Let $d,n$ be two integers such that $2\log_2 d\leq n $. We assume that the input space $\mathcal{X}$ is infinite. There exists an absolute constant $c>0$, and a set of prediction rules $\mathcal{F}=\{f_1,\dots,f_n\}$ such that for any real-valued procedure $T_n$, there exists a probability measure $\pi$, for which
\[
  \EE_{D_n}[A_\text{cla}(T_n)]- \min_{f\in\mathcal{C}}(A_\text{cla}(f))\geq c \sqrt{\frac{\log d}{n}}.
\]
 \end{proposition}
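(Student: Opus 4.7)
The plan is to specialize the original proof of Theorem~2 in \citet{Lec07} to the case $\kappa=+\infty$, which corresponds to the absence of any Tsybakov margin assumption and yields the slow rate $\sqrt{\log d/n}$. The standard approach is a reduction from the minimax lower bound to a multiple hypothesis testing problem, resolved via Assouad's lemma. First, since $\mathcal{X}$ is infinite, I pick $m=\lfloor\log_2 d\rfloor$ pairwise disjoint measurable subsets $A_1,\dots,A_m\subset\mathcal{X}$ of equal positive measure. For each $\sigma\in\{-1,+1\}^m$, I define a distribution $\pi_\sigma$ on $\mathcal{X}\times\{-1,+1\}$ with marginal $P^X$ uniform on $\bigcup_k A_k$ and conditional $\eta_\sigma(x)=(1+\sigma_k\epsilon)/2$ on each $A_k$, where $\epsilon\in(0,1)$ is a noise level to be tuned. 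The $d$ prediction rules are taken as $f_j(x)=\sum_k \tau_k^{(j)}\mathbbm{1}_{A_k}(x)$, enumerating the $2^m\leq d$ sign patterns of $\{-1,+1\}^m$; then each Bayes classifier $\mathrm{sign}(2\eta_\sigma-1)$ coincides with exactly one $f_j$, hence lies in $\mathcal{C}$.

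Second, I use the identity for the excess hinge risk on $[-1,1]$-valued predictors, $A_\text{cla}(f)-A_\text{cla}^{*}=\EE[(2\eta(X)-1)(\mathrm{sign}(2\eta(X)-1)-f(X))]$, together with the observation that clipping a real-valued procedure to $[-1,1]$ can only decrease its hinge risk, so the lower bound for arbitrary real-valued $T_n$ reduces to the clipped case. Under the construction $|2\eta_\sigma-1|=\epsilon$ almost surely, so the excess risk is exactly $(\epsilon/m)$ times a one-norm distance between $T_n$ and the binary pattern $\sigma$. This reduces the minimax bound to the problem of estimating $\sigma\in\{-1,+1\}^m$ from $n$ samples, for which Assouad's lemma is tailor-made: flipping a single coordinate $\sigma_k$ changes the joint law only on $A_k$, a set of $P^X$-mass $1/m$, giving $\mathrm{KL}(\pi_\sigma^{\otimes n}\|\pi_{\sigma'}^{\otimes n})=(n/m)\,\mathrm{KL}(\mathrm{Bern}(\tfrac{1+\epsilon}{2})\|\mathrm{Bern}(\tfrac{1-\epsilon}{2}))\leq C n\epsilon^2/m$ for $\epsilon\leq 1/2$.

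Third, I tune $\epsilon=c_0\sqrt{m/n}$ for a small universal constant $c_0$; the hypothesis $2\log_2 d\leq n$ ensures $\epsilon<1$, so the family $(\pi_\sigma)$ is well-defined. Assouad's lemma then lower bounds the maximum expected Hamming distance between any estimator of $\sigma$ and the true $\sigma$ by a constant fraction of $m$, which in turn lower bounds the expected excess hinge risk by a constant times $\epsilon\asymp\sqrt{\log d/n}$, completing the proof. The main obstacle is administrative rather than conceptual: the hinge excess risk identity is subtler than for the $0$-$1$ loss because one must handle arbitrary real-valued $T_n$ and argue carefully that clipping is harmless; once this step is dispatched, what remains is a textbook Assouad computation where the KL divergence scales as $n\epsilon^2/\log d$ and the separation between neighbouring hypotheses in the excess risk metric is of order $\epsilon/m$ per coordinate.
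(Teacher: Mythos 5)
Your proof is correct, but it takes a genuinely different route from the paper's. The paper proves this proposition by pure citation bookkeeping: Theorem~2 of \citet{Lec07} is stated under the margin assumption MAH$(\kappa)$, which becomes vacuous as $\kappa\to\infty$ (one may always take $c_0=2$ since the predictors are $[-1,1]$-valued), and the constant $c(\kappa)$ in that theorem converges to the positive limit $\sqrt{2}/(4e\sqrt{\log 2})$; the proposition then follows by letting $\kappa\to\infty$ in the cited result. You instead rebuild the lower bound from scratch: a hypercube of $2^m$ distributions with $m=\lfloor\log_2 d\rfloor$, the exact excess-hinge-risk identity $A_{\text{cla}}(f)-A_{\text{cla}}^*=\EE[(2\eta-1)(\mathrm{sign}(2\eta-1)-f)]$ for $[-1,1]$-valued $f$ together with the clipping reduction for real-valued $T_n$, and Assouad's lemma with the KL scaling $n\epsilon^2/m$ and noise level $\epsilon\asymp\sqrt{m/n}$ (where $2\log_2 d\leq n$ guarantees $\epsilon<1$). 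This is essentially the alternative the paper itself gestures at when it remarks that one ``could also have plugged arguments of the proof of Theorem 14.5 of \citet{DevGyoLug96}.'' What each buys: the paper's argument is two lines but opaque (the reader must trust that the limit of Lécué's constant is positive and that the margin assumption trivializes), while yours is self-contained, exhibits the hard instance explicitly, and makes the role of the hypothesis $2\log_2 d\leq n$ transparent; the price is that you must carry out the administrative steps you identify (the clipping argument and the conversion of a real-valued $T_n$ into a coordinate-wise sign decision before applying Assouad), both of which are standard and go through as you describe.
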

\begin{proof}
Theorem 2 of \citet{Lec07} is stated under an additional Margin assumption MAH$(\kappa)$ (see definition and notation below Eq. (9) in \citet{Lec07}) on the probability distribution $\pi$, i.e., there exists a constant $c_0$ such that 
\[
 \EE [\vert f(X)-f^*(X)\vert]\leq c_0 (A(f)-A^*)^{1/\kappa},
\]
for any function $f$ on $\mathcal X$ with values in $[-1,1]$. Therefore taking $\kappa \to \infty$, we can always consider $c_0=2$. And the constant $c(\kappa)$ in Theorem 2 of \citet{Lec07} is 
\[ 
c(\kappa)=c_0^{\kappa}(4e)^{-1}2^{-2\kappa(\kappa-1)/(2\kappa-1)}(\log 2)^{-\kappa/(2\kappa-1)},
\]
which goes when $\kappa\to\infty$ to $c_\infty=\sqrt{2}/(4e\sqrt{\log2})$. Hence taking $\kappa\to \infty$ in Theorem 2 of \citet{Lec07} implies  Proposition~\ref{theo:lecu}. We could also have plugged arguments of the proof of Theorem 14.5 of \citet{DevGyoLug96} to directly prove this result. 
\end{proof}
We relate now the problem of convex aggregation of classifiers to the problem of optimizing a linear function on the simplex. Consider the set of prediction rules $\mathcal{F}=\{f_1,\dots,f_n\}$ given by Proposition~\ref{prop:lowerlin} and denote by $F:\mathcal X \to \RR^d, x\mapsto (f_1(x),\dots, f_d(x)) $.
For $f\in \mathcal C$, there is  $\theta\in\Delta_d$ such that $f=\sum_{i=1}^d\theta(i) f_i$ and we obtain
 \[
 A_\text{cla}(f)=  \EE\max(1-Y\langle F(X), \theta\rangle,0).
 \]
 On the other hand, when the $f_i$ are valued in $[-1,1]$, the classification problem becomes equivalent to maximize the expectation $\EE Yf(X)$ since the hinge loss is linear on $[-1,1]$:
 \[
   Y\in\{-1,1\}, f(X)\in[-1,1] \implies Yf(X)\in[-1,1] \implies \EE\max(1-Yf(X),0)=1-\EE Yf(X).
 \]
 Combining both, we obtain that 
 \[
 A_\text{cla}(f)=1-\langle \EE [YF(X)], \theta\rangle=1+C(\theta),
 \] 
 where $C(\theta)=-\langle\EE[ YF(X)], \theta\rangle$ is a linear function. This set enables us to construct a difficult subclass of linear functions
\[
 \mathcal{G}_{\text{lin}}=\{ C(\theta)=-\langle\EE[ YF(X)], \theta\rangle; (X,Y)\sim\pi\}.
\]
We also define the first-order stochastic oracle $\phi_\text{lin}$ on   $\mathcal{G}_{\text{lin}}$ as follows
\[
 \phi_\text{lin}(\theta,f)=\Big(\langle yF(x), \theta\rangle,  yF(x)\Big),\text{ for } (x,y)\sim \pi .
\]
As before we may optimize $C$ with a stochastic approximation algorithm $M\in\mathcal M_n$ to obtain  $\theta_n\in\Delta_d$ and therefore build a estimator $T_n=\sum_{i=1}^d \theta_n(i)f_i$ which belongs to $\mathcal C$. Moreover we have
\[
 A_\text{cla}(T_n)= C(\theta_n)\text{ and } \min_{f\in\mathcal C} A_\text{cla}(f)=\min_{\theta\in\Delta_d}C(\theta).
\]
Consequently, for the oracle $\phi_\text{lin}$ and the class $ \mathcal{G}_{\text{lin}}$ Proposition~\ref{prop:aggreg} implies that 
\begin{equation}
  \epsilon_n^*(\mathcal G_\text{lin},\Delta_d,\phi_\text{lin}) \geq  c \sqrt{\frac{\log d}{n}} .
\end{equation}
And we have proven the following minimax oracle complexity.
\begin{proposition}\label{prop:oraclelin}
 Let $\Delta_d$ be the simplex. Then there exists universal constant $c>0$ such that the minimax oracle complexity over the class $\mathcal{S}_\text{lin}$ of linear functions satisfies the following lower bound for $2\log_2 d\leq n$ 
  \[
   \sup_{\phi\in\Phi} \epsilon_n^*(\mathcal{S}_\text{lin},\Delta_d,\phi) \geq c \sqrt{\frac{\log(d)}{n}}.
  \]
\end{proposition}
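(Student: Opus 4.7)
The plan is to follow the roadmap sketched in the paragraph preceding the statement: exhibit a difficult subclass of linear functions by embedding the classification aggregation lower bound of Proposition~\ref{prop:lowerlin} into the stochastic optimization framework. First, I fix the integers $d,n$ with $2\log_2 d \leq n$ and invoke Proposition~\ref{prop:lowerlin} to obtain a set of prediction rules $\mathcal{F}=\{f_1,\ldots,f_d\}$ with values in $[-1,1]$, such that for every real-valued procedure $T_n$ there exists a probability distribution $\pi$ on $\mathcal{X}\times\{-1,1\}$ (realizing the unfavorable case in the proposition) with aggregation excess risk at least $c\sqrt{(\log d)/n}$. Let $F:\mathcal{X}\to\RR^d$, $x\mapsto (f_1(x),\ldots,f_d(x))$, and note that $\|F(x)\|_\infty \leq 1$.

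The second step is the key linearization observation: when $Y\in\{-1,1\}$ and $f(X)\in[-1,1]$, we have $Yf(X)\in[-1,1]$ and hence $\max(1-Yf(X),0) = 1-Yf(X)$. So for any $\theta\in\Delta_d$ and $f_\theta = \sum_i \theta(i) f_i \in \mathcal{C}$, the hinge risk collapses to an affine function of $\theta$:
\[
A_\text{cla}(f_\theta) = 1 - \langle \EE[YF(X)], \theta\rangle = 1 + C_\pi(\theta),
\]
where $C_\pi(\theta) = -\langle \EE[YF(X)],\theta\rangle$ is linear. This defines a subclass $\mathcal{G}_\text{lin} \subset \mathcal{S}_\text{lin}$ of linear objectives indexed by the admissible distributions~$\pi$, and minimizing $C_\pi$ on $\Delta_d$ is strictly equivalent to finding the best convex combination of $f_1,\ldots,f_d$ under~$\pi$.

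Third, I endow $\mathcal{G}_\text{lin}$ with a natural first-order stochastic oracle $\phi_\text{lin}$: given a query $\theta$ and a function $C_\pi$, draw $(x,y)\sim \pi$ independently of past queries and return $\phi_\text{lin}(\theta,C_\pi) = (-y\langle F(x),\theta\rangle, -yF(x))$. Independence and the identity $\EE[-yF(x)]=\nabla C_\pi(\theta)$ give unbiasedness, and because $\|YF(X)\|_2 \leq \sqrt d$ almost surely the bounded-variance condition $\EE\|g(\theta)-\nabla C_\pi(\theta)\|^2 \leq C(1+\|\theta\|^2)$ is trivially satisfied with a universal constant. Thus $\phi_\text{lin}\in\Phi$.

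Fourth, I perform the reduction. Given any stochastic approximation method $M\in\mathcal{M}_n$ operating on $(\mathcal{G}_\text{lin},\Delta_d,\phi_\text{lin})$, one can simulate it using an i.i.d.\ sample $D_n$ from $\pi$: the $i$th oracle call requested by $M$ is answered using the $i$th sample $(X_i,Y_i)$. This yields an output $\theta_n\in\Delta_d$ whose expectation over $D_n$ equals the expectation over the oracle randomness. The associated aggregate $T_n := \sum_i \theta_n(i) f_i$ is a valid real-valued procedure in the sense of Proposition~\ref{prop:lowerlin}, and by construction
\[
\EE_{D_n}[A_\text{cla}(T_n)] - \min_{f\in\mathcal{C}} A_\text{cla}(f)
= \EE_{\phi_\text{lin}}[C_\pi(\theta_n)] - \min_{\theta\in\Delta_d} C_\pi(\theta)
= \EE_{\phi_\text{lin}}\,\epsilon_n(M,C_\pi,\Delta_d,\phi_\text{lin}).
\]
Taking the supremum over $\pi$ on the right and applying Proposition~\ref{prop:lowerlin} on the left gives $\sup_{\pi} \EE_{\phi_\text{lin}}\,\epsilon_n(M,C_\pi,\Delta_d,\phi_\text{lin}) \geq c\sqrt{(\log d)/n}$. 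Since $M$ was arbitrary, this lower bounds $\epsilon_n^*(\mathcal{G}_\text{lin},\Delta_d,\phi_\text{lin})$, and a fortiori $\sup_{\phi\in\Phi}\epsilon_n^*(\mathcal{S}_\text{lin},\Delta_d,\phi)$, by the same quantity, which is the claim.

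The only real obstacle is the reduction step: one must be careful that the ``worst $\pi$'' in Proposition~\ref{prop:lowerlin} is allowed to depend on the procedure $T_n$, so that the minimax order of quantifiers is preserved when we pass to the optimization side; but since the oracle $\phi_\text{lin}$ is itself built from $\pi$, choosing $\pi$ after $M$ is exactly the definition of the minimax oracle complexity, so this is automatic. All remaining ingredients (verifying oracle admissibility, linearity of the hinge loss in the regime $\mathcal{Y}=\{-1,1\}$, $\mathcal{F}\subset[-1,1]^\mathcal{X}$) are routine.
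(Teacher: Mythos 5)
Your proposal is correct and follows essentially the same route as the paper: invoke Proposition~\ref{prop:lowerlin} for the set of prediction rules, use linearity of the hinge loss on $[-1,1]$ to turn the aggregation problem into minimization of $C_\pi(\theta)=-\langle\EE[YF(X)],\theta\rangle$ over $\Delta_d$, build the oracle from i.i.d.\ draws of $\pi$, and transfer the lower bound. Your treatment of the quantifier order and the sign of the oracle output ($-yF(x)$ as the unbiased gradient estimate of $C_\pi$) is in fact slightly more careful than the paper's own writeup.
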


\section{Lower-bound on the rates of convergence of DA and MD algorithms}\label{app:compa}
Let us consider in this section that $f=0$, $g(\theta)=\frac{1}{2\nu}\Vert \theta-\theta_*\Vert_2^2$ and $h=\frac{1}{2}\Vert \theta\Vert_2^2$. In this case, for $n\geq1$, MD iterates $(\theta^{\text{md}}_{n})$ verify 
\[
\theta^{\text{md}}_{n}=\argmin_{\theta \in {\RR^d}} \Big\{ \frac{1}{2\nu}\Vert \theta-\theta_*\Vert_2^2 +\frac{1}{2\gamma} \Vert \theta-\theta^{\text{md}}_{n-1}\Vert_{2}^{2}\Big \}.
\]
Therefore $\theta^{\text{md}}_{n}=\theta_*+\frac{1}{\gamma \nu} (\theta^{\text{md}}_{n-1}-\theta_*)$, $\theta^{\text{md}}_{n}-\theta_*=\frac{1}{(\gamma \nu)^n} (\theta^{\text{md}}_{0}-\theta_*)$ and 
\[
g(\theta^{\text{md}}_{n})-g(\theta_*)=\frac{g(\theta^{\text{md}}_{0})-g(\theta_*)}{(\gamma \nu)^{2n}}.\]
Whereas DA iterates $(\theta^{\text{da}}_{n})$ satisfy
\[
\theta^{\text{da}}_{n}=\argmin_{\theta \in {\RR^d}} \Big\{ \frac{1}{2\nu}\Vert \theta-\theta_*\Vert_2^2 +\frac{1}{2\gamma n } \Vert \theta \Vert_{2}^{2}\Big \}.
\]
We compute $\theta^{\text{da}}_{n}=\frac{\gamma \nu n}{\gamma \nu n+1}\theta_*$ and 
\[
g(\theta^{\text{da}}_{n})-g(\theta_*)=\frac{g(\theta^{\text{da}}_{0})-g(\theta_*)}{(\gamma n)^2}.
\]

\section{Continuous time interpretation of DA et MD}\label{app:continuousint}
Following \citet{NemYud83, Kri15} we propose a continuous interpretation of these methods  for $g$ twice differentiable. We note this could be extended for $g$ non-smooth with differential inclusions.
\paragraph{Derivation of the  ordinary differential equation (ODE).}

 The first-order optimality condition  of the MD iteration in  \eq{mdprox}
$
 \gamma \nabla f(\theta_n)+\gamma \nabla g(\theta_{n+1}) +\nabla h(\theta_{n+1})-\nabla h(\theta_n)
$ can be rearranged as 
\[
 \frac{\nabla h(\theta_{n+1})-\nabla h(\theta_n)}{\gamma}=- \nabla f(\theta_n)- \nabla g(\theta_{n+1}).
\]
Noting $\partial_t  \nabla h(\theta)=\nabla^2h(\theta) \dot\theta$, this is exactly a forward-backward Euler discretization of the MD ODE  
\begin{equation}\label{eq:odemd}
\dot \theta = -\nabla^2h(\theta)^{-1}[\nabla f(\theta)+\nabla g(\theta)].
\end{equation}
On the other hand, considering the DA iteration in \eq{da} we obtain 
\begin{equation}\label{eq:da1}
\frac{\eta_{n}-\eta_{n-1}}{\gamma}
=
-\nabla f (\theta_{n-1}) \qquad \text{ and } \qquad
\eta_n=n\gamma \nabla g(\theta_n)+\nabla h(\theta_n).
\end{equation}
Combining both parts in \eq{da1} leads to the single equation 
\[
n\gamma \frac{\nabla  g(\theta_n)-\nabla g(\theta_{n-1})}{\gamma} + \nabla g(\theta_{n-1})+\frac{\nabla h(\theta_n)-\nabla h(\theta_{n-1})}{\gamma}=-\nabla f(\theta_{n-1}),
\]
which is the explicit Euler discretization of the ODE
$
 \partial_t(t\nabla g(\theta)+\nabla h(\theta))=-\nabla f(\theta).
$
Therefore the ODE associated to DA takes the form
\begin{equation}\label{eq:odeda}
 \dot \theta= -\nabla^2(h(\theta)+t g(\theta))^{-1}(\nabla f(\theta)+\nabla g(\theta)).
\end{equation}
It is worth noting that this ODE is very similar to the MD ODE in \eq{odemd}, with an additional term $t g(\theta)$ in the inverse mapping $\nabla^2(h(\theta)+t g(\theta))^{-1}$ which may thus slow down the DA dynamic.

\paragraph{Lyapunov analyzes.}
Lyapunov functions are used to prove convergence of the solutions of ODEs. In analogy with the discrete case, the Bregman divergence is a Lyapunov function for these ODEs \citep[see, e.g.,][]{Kri15} since
\BEAS
\partial_t D_h(\theta_*,\theta(t))
&=&\partial_t[h(\theta_*)-h(\theta(t))-\langle \nabla h(\theta(t)),\theta_*-\theta(t)\rangle]\\
&=& -\langle \nabla h(\theta(t)),\dot \theta(t)\rangle  +\langle \nabla^2 h(\theta(t))\dot \theta(t),\theta(t)-\theta_*\rangle +\langle \nabla h(\theta(t)),\dot \theta(t)\rangle\\
&=& \langle \nabla^2 h(\theta(t))\dot \theta(t),\theta(t)-\theta_*\rangle.
\EEAS
For the MD ODE  in \eq{odemd} we obtain
\BEAS
\partial_t D_h(\theta_*,\theta(t))
&=& -\langle \nabla f(\theta(t))+\nabla g(\theta(t)),\theta(t)-\theta_*\rangle\\
&\leq& \psi(\theta_*)-\psi(\theta(t))\qquad \text{(by convexity of $\psi$)}.
\EEAS
Integrating, this yields with Jensen inequality 
\[
 \psi(\bar\theta(t))-\psi(\theta_*)\leq \frac{1}{t}\int_0^t \big(\psi(\theta(s))-\psi(\theta_*)\big)ds\leq\frac{ D_h(\theta_*,\theta(0))-D_h(\theta_*,\theta(t))}{t},
\]
for $\bar\theta(t)=\frac{1}{t}\int_0^t \theta(s)ds$. This is the same convergence result as in the discrete time. For the DA ODE  in \eq{odeda} we obtain
 \BEAS
\partial_t D_{h+tg}(\theta_*,\theta(t))
&=&\partial_t[(h+tg)(\theta_*)-(h+tg)(\theta(t))-\langle \nabla (h+tg)(\theta(t)),\theta_*-\theta(t)\rangle]\\
&=&g(\theta_*) -\langle (\nabla h(\theta(t))+t\nabla g(\theta(t))),\dot \theta(t)\rangle +g(\theta(t))\\
&&+\langle \partial_t(\nabla h+t\nabla g)(\theta(t)),\theta(t)-\theta_*\rangle +\langle (\nabla+t\nabla g) h(\theta(t)),\dot \theta(t)\rangle\\
&=& g(\theta_*)-g(\theta_t) -\langle \nabla f(\theta(t)),\theta(t)-\theta_*\rangle.
\EEAS
Therefore by convexity of $f$,
$
\partial_t D_{h+tg}(\theta_*,\theta(t))\leq  \psi(\theta_*)-\psi(\theta(t))
$
and we obtain 
\[
 \psi(\bar\theta(t))-\psi(\theta_*)\leq\frac{ D_{h}(\theta_*,\theta(0))-D_{h
+tg}(\theta_*,\theta(t))}{t}.
\]
The continuous time argument  really mimics the proof of Proposition~\ref{prop:phifunction} without the technicalities associated with the  discrete time.  We remind that we recover the variational interpretation of \citet{Kri15,WibWilJor16,WilRecJor16}: the Lyapunov function generates the dynamic in the sense that a function $L$ is first chosen and secondly a dynamics, for which $L$ is a Lyapunov function, is then designed.  In this way MD and DA are the two different dynamics associated to the two different Lyapunov functions $D_h$ and $D_{h+tg}$.

\paragraph{Extension to the noisy-gradient case.}
We consider now we only have access to noisy estimates of the gradient as in \mysec{stogen} and propose a continuous-time interpretation of these stochastic methods.
Stochastic MD and SDA may be viewed, in their primal-dual forms, as discretizations of the following stochastic differential equations (SDE). For stochastic MD
\[
  \mathrm{d} \eta(t) = -[\nabla f(\theta(t)) +\nabla g(\theta(t))] \mathrm{d}t +\sigma  \mathrm{d}W(t)\mathrm{d}t \qquad \text{ and } \qquad \eta(t)=\nabla h(\theta(t)),
\]
and for SDA
\[
 \mathrm{d} \eta(t) = -\nabla f(\theta(t)) \mathrm{d}t +\sigma  \mathrm{d}W(t)\mathrm{d}t  \qquad \text{ and } \qquad \eta(t)=\nabla (h+tg)(\theta(t)),
\]
where $W_t$ is a Wiener process and $\sigma>0$. We note that the regularization $g$ does not take part in the SDA SDE which explains this dynamic is efficient in presence of noise. In contrast, the stochastic MD SDE is corrupted by the presence of the gradient $\nabla g$ which may not behaves well for non-smooth $g$. This continuous-time interpretation of stochastic algorithms could lead to further insights but is outside the scope of this paper.

\section{Examples of different geometries}\label{app:examples}

We describe now different examples of concrete geometries and how SDA is then implemented for well known regularizations $g$.

\paragraph{Euclidean distance.}
The simplest geometry is obtained by taking the function $h(\theta)=\frac{1}{2}\Vert \theta\Vert_2^2$, which is a Legendre function on $\dom h ={\RR^d}$. Its associated Bregman divergence is also the squared Euclidean distance $D_h(\alpha,\beta)=\frac{1}{2}\Vert \alpha-\beta\Vert_2^2$. Therefore \textbf{(LC)} is equivalent to the smoothness of the function $f$ and we return to classic results on proximal gradient descent.
\begin{itemize}
\item Projection: Let $g=\mathbbm 1_\mathcal{C}$ be the indicator of a convex set $\mathcal{C}$. The SDA method yields to the projected method
\[
\theta_n=\min_{\theta\in \mathcal{C} }\Big \Vert \theta+\gamma\sum_{k=0}^{n-1}\nabla f_{k+1}(\theta_k)\Big\Vert_2^2.
\]

 \item
 $\ell_2$-regularization: 
Let $g=\frac{1}{2} \Vert\cdot\Vert_Q^2$  where $Q\succcurlyeq0$, we directly have  $\nabla h^*_n(\eta) =(\idm +n\gamma Q)^{-1}\eta$ and the SDA method comes back to 
\[
 \theta_n=\theta_{n-1}-(\gamma^{-1}\idm+nQ)^{-1}(Q\theta_{n-1}+\nabla f_{n}(\theta_{n-1})), \text{ for } n\geq1,
\]
which is a standard gradient descent on $f+g$ with a structured decreasing step-size $\gamma_n= (\gamma^{-1}\idm+nQ)^{-1}$.

\item $\ell_1$-regularization:
Let  $g=\lambda \Vert\cdot\Vert_1$, we can compute the primal iterate with, for $i=1,\dots,d$, $\nabla_ih^*_n(\eta)=\sign(\eta(i))\max(\vert\eta(i)\vert-n\gamma\lambda,0)$ . Therefore the SDA method is equivalent to the iteration:
\[
 \theta_n(i)=-\sign\Big(\sum_{k=0}^{n-1}\nabla_if_{k+1}(\theta_k)\Big)\max\bigg(\Big\vert\sum_{k=0}^{n-1}\nabla_if_{k+1}(\theta_k)\Big\vert-n\gamma\lambda,0\bigg)\text{ for } i=1,\dots,d.
\]
Yet since convergence results hold on the average of the iterates $\bar \theta_n$, SDA provides less sparse solutions than other methods which rather consider final iterates as outputs.
\end{itemize}

\paragraph{Kullback-Leibler divergence.}
The negative entropy $h(\theta)=\sum_{i=1}^n \theta(i) \log(\theta(i))$ is a Legendre function on  $\dom h=(0,\infty)^n$ whose associated  Bregman divergence is  the Kullback-Leibler divergence 
\[
D_h(\alpha,\beta)=\sum_{i=1}^n \alpha(i) \log\Big(\frac{\alpha(i)}{\beta(i)}\Big)+\sum_{i=1}^n (\beta(i)-\alpha(i)),
\]
and its conjugate gradient mapping is $\nabla_i h^*(\eta)=\exp(\eta_i)$ for ${i=1,\dots,d}$.

Since $h$ is $1$-strongly convex with respect to the $\ell_{1}$-norm \citep[see, e.g.,][Proposition 5.1]{Teb03}, \textbf{(LC)} holds, for example, if $f$ is smooth with regards to the $\ell_{1}$-norm. This illustrates one of the non-Euclidean benefit since Lipschitz constants under the  $\ell_{\infty}$-norm are  smaller than under the $\ell_{2}$-norm.

This geometry is particularly appropriated to constrained minimization on the simplex $\Delta_d$. With $g(\theta)=\mathbbm{1}_{\Delta_d}$, SDA update is the dual averaging analogue of the exponentiated gradient algorithm \citep{KivWar97}:
\[
 \theta_n(i)=\frac{\exp (\eta_n(i))}{\sum_{j=1}^d\exp(\eta_n(j))} \text{ for } i=1,\dots,d.
\]

\paragraph{$\ell_p$-norm.}

The choice $h=\frac{1}{2(p-1)}\Vert \cdot\Vert_p^2$ for $p\in(1,2]$ is believed to adapt to the geometry of learning problem and is often used with $p=1+1/\log(d)$ in association with $\ell_1$-regularization \citep[see, e.g.,][]{Duc10}. Its Fenchel conjugate is the squared conjugate norm $h^*=\frac{1}{2(q-1)}\Vert \cdot\Vert_q^2$ for $1/p+1/q=1$ and its conjugate gradient mapping is $\nabla_ih^*(\eta)=\frac{\sign (\eta(i))\vert \eta(i)\vert^{q-1}}{(q-1)\Vert \eta\Vert_q^{q-2}} $ \citep[see, e.g.,][]{GenLit99}.
For $\ell_1$-regularization, this yields to:
\[
 \nabla_i h_n^*(\eta)=\nabla_i h^*\big(\sign(\eta(i))\max(\vert\eta(i)\vert-n\gamma\lambda,0)\big) \text{ for } i=1,\dots,d.
\]

The function $h$ is $1$-strongly convex with respect to the $\ell_p$-norm \citep[see, e.g.,][]{Han56}. Therefore \textbf{(LC)} holds if $f$ is smooth with respect to the the $\ell_{p}$-norm. However when  the function $f$ considered is quadratic as in \mysec{stogen},
 we can directly show that \textbf{(LC)} holds under tighter conditions on the Hessian matrix $\Sigma$ (see proof in \myapp{proofp}). 
\begin{proposition}\label{prop:pdivergence}
Assume that $f(\theta)=\frac{1}{2}\langle \theta, \Sigma\theta\rangle$ and $h(\theta)=\frac{1}{2(p-1)}\Vert \theta \Vert_p^2$.
 Then $h-\gamma f$ is convex for any constant step-size $\gamma$ such that 
 \[
 \gamma \leq \min_\alpha \frac{\Vert \alpha\Vert _p^2}{\langle \alpha,\Sigma \alpha\rangle}.
 \]
\end{proposition}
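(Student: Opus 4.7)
The plan is to reduce the convexity of $h-\gamma f$ to a pointwise inequality comparing the $\ell_p$-norm and the $\Sigma$-seminorm, by invoking the classical strong convexity of $\frac{1}{2(p-1)}\Vert\cdot\Vert_p^2$.

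First, I would record the quadratic identity for $f$: for any $x,y\in{\RR^d}$ and $t\in[0,1]$,
\[
f(tx+(1-t)y) = tf(x)+(1-t)f(y)-\tfrac{1}{2}t(1-t)\langle x-y,\Sigma(x-y)\rangle.
\]
Second, I would invoke the well-known fact (a consequence of Hanner's inequality, and the reference \citet{Han56} cited in the paper for exactly this claim) that for $p\in(1,2]$ the function $h=\frac{1}{2(p-1)}\Vert\cdot\Vert_p^2$ is $1$-strongly convex with respect to the $\ell_p$-norm, i.e.
\[
h(tx+(1-t)y) \leq t h(x)+(1-t)h(y)-\tfrac{1}{2}t(1-t)\Vert x-y\Vert_p^2.
\]
Third, I would subtract $\gamma$ times the first identity from the second:
\[
(h-\gamma f)(tx+(1-t)y) \leq t(h-\gamma f)(x)+(1-t)(h-\gamma f)(y) - \tfrac{1}{2}t(1-t)\bigl[\Vert x-y\Vert_p^2-\gamma\langle x-y,\Sigma(x-y)\rangle\bigr].
\]
Thus $h-\gamma f$ is convex as soon as $\Vert \alpha\Vert_p^2 \geq \gamma\langle \alpha,\Sigma\alpha\rangle$ for every $\alpha\in{\RR^d}$, which rearranges exactly to
\[
\gamma \leq \min_{\alpha\neq 0}\frac{\Vert \alpha\Vert_p^2}{\langle \alpha,\Sigma\alpha\rangle}.
\]

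I do not expect any real obstacle here. One might be tempted to argue via Hessians (via Proposition~\ref{prop:lc}: $h-\gamma f$ convex iff $\nabla^2 h(\theta)\succcurlyeq \gamma\Sigma$, combined with a pointwise $\nabla^2 h(\theta)\succcurlyeq \Vert\cdot\Vert_p^2$-type bound), but for $p<2$ the Hessian of $\Vert\cdot\Vert_p^2$ is not defined on the coordinate hyperplanes, so going through the strong convexity inequality directly at the level of function values (as above) is cleaner and avoids any density/smoothing argument.
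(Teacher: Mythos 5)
Your proof is correct, and it takes a genuinely different route from the paper's. The paper argues at the second-order level: it computes $\nabla^2 h(\theta)$ explicitly as a diagonal matrix plus a rank-one term that is positive semi-definite when $p\leq 2$, discards the rank-one part, and then solves the constrained problem $\min\big\{\sum_{i}\alpha(i)^2\eta(i)^{p-2}\,:\,\sum_i\eta(i)^p=1,\ \eta\geq0\big\}$ by a KKT argument to obtain the key bound $\langle\alpha,\nabla^2h(\theta)\alpha\rangle\geq\Vert\alpha\Vert_p^2$, whence $\nabla^2h(\theta)\succcurlyeq\gamma\Sigma$ under the stated condition on $\gamma$. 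You instead take as input the classical $1$-strong convexity of $\frac{1}{2(p-1)}\Vert\cdot\Vert_p^2$ with respect to $\Vert\cdot\Vert_p$ for $p\in(1,2]$ --- a fact the paper itself quotes from \citet{Han56} just before stating the proposition --- and combine it with the exact expansion of the quadratic $f$ along segments; both arguments thus reduce to the same pointwise comparison $\Vert\alpha\Vert_p^2\geq\gamma\langle\alpha,\Sigma\alpha\rangle$. The paper's route is self-contained (its KKT computation is in effect a from-scratch proof of the differential form of the strong-convexity fact you cite), while your route is cleaner at the non-smooth points: for $p<2$ the map $\theta\mapsto\Vert\theta\Vert_p^2$ fails to be twice differentiable on the coordinate hyperplanes, so the Hessian argument implicitly needs a density or limiting step that the paper does not spell out, whereas your function-value inequality requires no such patch. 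Your remark that the statement should be read with $p\in(1,2]$ is also apt: for $p>2$ the strong-convexity constant degrades and the sign of the rank-one term in the paper's Hessian flips, so both proofs genuinely use $p\leq2$.
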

When $\Sigma=\EE (x\otimes x)$ is a covariance matrix as in \mysec{leastsquares}, $\langle \alpha,\Sigma \alpha\rangle=\EE \langle x,\alpha\rangle^2\leq \EE \Vert x\Vert_q^2\Vert \alpha\Vert_p^2$ by H\"older inequality,
and  Proposition~\ref{prop:pdivergence} admits the following corollary.
\begin{corollary}\label{cor:pdivergence}
 Assume that $f(\theta)=\frac{1}{2} \EE(\langle x,\theta\rangle-y)^2$, $h(\theta)=\frac{1}{2}\Vert \theta \Vert_p^2$ and $q$ such that $1/p+1/q=1$.
 Then $h-\gamma f$ is convex for any constant step-size $\gamma$ such that 
 \[
 \gamma \leq 1/\EE \Vert x \Vert_q^2.
 \]
\end{corollary}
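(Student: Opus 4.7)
The plan is to combine two observations: the function $h(\theta)=\frac{1}{2(p-1)}\|\theta\|_p^2$ is known to be $1$-strongly convex with respect to $\|\cdot\|_p$ on $\RR^d$ for $p\in(1,2]$ (a classical consequence of Hanner's inequality; see e.g.\ Ball-Carlen-Lieb), and $f$ is a pure quadratic so its Bregman divergence has an exact closed form.

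More precisely, I would use the equivalence provided by Proposition~\ref{prop:lc}: $h-\gamma f$ is convex on $\mathring{\mathcal{X}}$ if and only if $D_{\gamma f}(\alpha,\beta)\leq D_h(\alpha,\beta)$ for all admissible $\alpha,\beta$. For the quadratic $f$, I have the exact identity
\[
D_{\gamma f}(\alpha,\beta)=\tfrac{\gamma}{2}\langle \alpha-\beta,\Sigma(\alpha-\beta)\rangle,
\]
while the $1$-strong convexity of $h$ with respect to $\|\cdot\|_p$ gives the lower bound
\[
D_h(\alpha,\beta)\geq \tfrac{1}{2}\|\alpha-\beta\|_p^2.
\]
Combining the two, a sufficient condition for $D_{\gamma f}\leq D_h$ pointwise is $\gamma\langle v,\Sigma v\rangle \leq \|v\|_p^2$ for every $v=\alpha-\beta\in\RR^d$, which is exactly the assumption
\[
\gamma \leq \min_{v\neq 0}\frac{\|v\|_p^2}{\langle v,\Sigma v\rangle}.
\]
Hence $h-\gamma f$ is convex.

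The only delicate point is the invocation of $1$-strong convexity of $h$ at points with vanishing coordinates, where $\|\cdot\|_p^2$ fails to be twice differentiable when $p<2$. This is not actually an obstacle: the first-order strong convexity inequality $h(\alpha)\geq h(\beta)+\langle \nabla h(\beta),\alpha-\beta\rangle+\frac{1}{2}\|\alpha-\beta\|_p^2$ holds globally on $\RR^d$ for $p\in(1,2]$ (Hanner-type arguments, as established in the references mentioned above), and $h$ is differentiable everywhere on $\RR^d$ for $p>1$, so the reduction to the Bregman inequality $D_f\leq\gamma^{-1}D_h$ is valid without appealing to Hessians. This is the main (and essentially the only) technical step; the remainder is the arithmetic above.
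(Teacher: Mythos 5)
Your argument, as written, stops one step short of the stated conclusion. What you actually derive is the sufficient condition $\gamma\leq\min_{v\neq 0}\Vert v\Vert_p^2/\langle v,\Sigma v\rangle$, which is the content of Proposition~\ref{prop:pdivergence}, and you then assert that this ``is exactly the assumption'' of the corollary. It is not: the corollary's hypothesis is $\gamma\leq 1/\EE\Vert x\Vert_q^2$. The missing link is H\"older's inequality applied to $\Sigma=\EE[x\otimes x]$: for every $v$, $\langle v,\Sigma v\rangle=\EE\langle x,v\rangle^2\leq\EE\big[\Vert x\Vert_q^2\big]\,\Vert v\Vert_p^2$, hence $\min_{v\neq 0}\Vert v\Vert_p^2/\langle v,\Sigma v\rangle\geq 1/\EE\Vert x\Vert_q^2$ and the stated step-size condition implies yours. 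This is a one-line repair, but it is the only step in which the least-squares form of $f$ and the dual exponent $q$ actually enter, so without it you have proved the proposition rather than the corollary. (A second, cosmetic point: you work with $h=\frac{1}{2(p-1)}\Vert\cdot\Vert_p^2$, the normalization for which the $1$-strong-convexity constant, and hence the stated bound, are correct; the corollary writes $\frac{1}{2}\Vert\cdot\Vert_p^2$, but the surrounding text and the bias bound quoted just after it make clear that $\frac{1}{2(p-1)}$ is intended, so your choice is the right one.)

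Modulo that repair, your route to the intermediate inequality is genuinely different from the paper's and is sound. The paper proves Proposition~\ref{prop:pdivergence} by an explicit computation: it writes $\nabla^2 h(\theta)=\Diag(u)+\alpha vv^\top$ with $\alpha\geq 0$ for $p\in(1,2]$, drops the rank-one term, and minimizes $\sum_i\alpha(i)^2u(i)$ over the unit $\ell_p$-sphere via a KKT argument to obtain $\langle \alpha,\nabla^2h(\theta)\alpha\rangle\geq\Vert\alpha\Vert_p^2$; the corollary then follows by the same H\"older step you are missing. You instead invoke the classical $1$-strong convexity of $\frac{1}{2(p-1)}\Vert\cdot\Vert_p^2$ with respect to $\Vert\cdot\Vert_p$ (which the paper itself cites, to Hanner, just above the proposition) and compare Bregman divergences through Proposition~\ref{prop:lc}, using the exact identity $D_{\gamma f}(\alpha,\beta)=\frac{\gamma}{2}\Vert\alpha-\beta\Vert_\Sigma^2$. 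Your version is shorter, imports a standard result as a black box where the paper is self-contained, and, as you correctly note, sidesteps the failure of twice-differentiability of $\Vert\cdot\Vert_p^2$ at points with vanishing coordinates, a point the paper's Hessian computation glosses over. Both arguments yield the same constant.
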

Therefore we may use the algorithm with bigger step-size than in the Euclidean case. Moreover when the algorithm is started from $\theta_0=0$, the Bregman divergence is  $D_h(\theta_*,\theta_0)=\frac{1}{2(p-1)}\Vert \theta_* \Vert_p^2$ and the bias in Proposition~\ref{prop:phifunction} would be bounded by $\frac{\EE \Vert x \Vert_q^2\Vert \theta_* \Vert_p^2}{2(p-1)}$.

For high-dimension problems, taking $q=1+\log(d)$ (with $p \sim 1$ and $q\sim +\infty$) yields to bounds depending on the $\ell_1$-norm of the optimal predictor and the $\ell_\infty$-norm of the features which is advisable for sparse problems.

 \section{Proof of Proposition~\ref{prop:pdivergence}}\label{app:proofp}
We consider here $h(\theta)=\frac{1}{2(p-1)}\Vert \theta\Vert_p^2$. 
For $\theta\in{\RR^d}$, $h$ is twice differentiable. Its gradient is 
\[
\nabla_ih(\theta)=\frac{\sign(\theta(i))\vert \theta(i)\vert^{p-1}}{(p-1)\Vert \theta \Vert _p^{p-2}},
\]
and its Hessian may be written for  $\alpha= \frac{2-p}{(p-1)}\Vert \theta\Vert_p^{-2(p-1)}$,  $u(i)= \Vert \theta\Vert_p^{2-p}\theta(i)^{p-2}$ and $v(i)=\theta(i)^{p-1}$ for $i=1,\dots,d$, as
\[
\nabla^2h (\theta)= \Diag(u)+\alpha v v^\top,
\]
The function $h-\gamma f$ is convex  if and only if $\nabla^2 h(\theta)\preccurlyeq \gamma \Sigma$ for all $\theta\in{\RR^d}$. This condition is equivalent to
\[
 \min_\theta \min_\alpha \frac{\langle \alpha,\nabla^2h (\theta)\alpha\rangle}{\langle \alpha,\Sigma \alpha\rangle}\geq \gamma.
\]
A sufficient condition  is that $ \Diag u\succcurlyeq \gamma \Sigma$. After a change of variables, $u$ may be written as $u(i)=\eta(i)^{p-2}$ where $\eta(i)=\vert \theta(i)\vert /\Vert \theta\Vert_p$ satisfies $\sum_{i=1}^d\eta(i)^p=1$ and $\eta(i)\geq0$.
Hence for all $\theta,\alpha\in{\RR^d}$ 
\[
 \langle \alpha,\nabla^2h (\theta)\alpha\rangle\geq  \sum_{i=1}^d \alpha(i)^2u(i)=\sum_{i=1}^d \alpha(i)^2 \eta(i)^{p-2},
\]
which implies
\[
\min_{\theta\in{\RR^d}}     \langle \alpha,\nabla^2h (\theta)\alpha\rangle\geq \min_{\eta\in{\RR^d}} \sum_{i=1}^d \alpha(i)^2 \eta(i)^{p-2}\text{ such that } \sum_{i=1}^d\eta(i)^p=1 \text{ and  }\eta(i)\geq0.
    \]
This optimization problem is equivalent with $v(i)=\eta(i)^p$  to  the one the simplex $\Delta_d$
\[
  \min_{v\in{\RR^d}} \sum_{i=1}^d \alpha(i)^2 v(i)^{1-2/p}\text{ such that } \sum_{i=1}^d v(i)=1 \text{ and  }\nu(i)\geq0,
\]
for which we define the Lagrangian $\mathcal{L}(v,\lambda,\mu)= \sum_{i=1}^d \alpha(i)^2 v(i)^{1-2/p}-\langle \lambda,v\rangle+\nu (1- \sum_{i=1}^d v(i))$ for $\lambda\in\RR^d_+$ and $\mu\in\RR$. Its gradient is $\nabla _{v(i)} \mathcal{L}(v,\lambda,\mu) = (1-2/p)\alpha(i)^2/v(i)^{2/p}-\lambda(i)-\nu$. Writing the KKT  condition for this problem \citep[see, e.g.,][]{BoyVan04}, we have that $(v,\lambda,\nu)$ is optimal if and only if $ (1-2/p)\alpha(i)^2/v(i)^{2/p}-\lambda(i)-\nu=0$, $\sum_{i=1}^d v(i)=1$ and for all $i$; $\lambda(i)\geq0$, $v(i)\geq0$ and $\lambda(i)v(i)=0$. These conditions are satisfied by $v(i)=\frac{\alpha(i)^p}{\sum_{i=1}^d a(j)^p}$, $\alpha(i)=0$ and $\nu=(1-2p)( \sum_{i=1}^d a(j)^p)^{2/p}$. Hence the  minimum value is
\[
 \sum_{i=1}^d \alpha(i)^2 v(i)^{1-2/p}=\sum_{i=1}^d \alpha(i)^2 \frac{\alpha(i)^{p-2}}{(\sum_{i=1}^d a(j)^p)^{1-2/p}}=\frac{\sum_{i=1}^d a(j)}{(\sum_{i=1}^d a(j)^p)^{1-2/p}}=\Vert \alpha\Vert_p^2.
\]
Consequently 
\[
 \langle \alpha,\nabla^2h (\theta)\alpha\rangle\geq \Vert \alpha\Vert_p^2,
\]
and $h-\gamma f$ is convex for $\gamma \leq \min_{\alpha \in{\RR^d}} \frac{\Vert \alpha\Vert _p^2}{\langle \alpha,\Sigma \alpha\rangle}$.

\section{Standard benchmarks}\label{app:expsido}

We have considered the \emph{sido} dataset which is often used for comparing large-scale optimization algorithms. This is a \emph{finite} binary classification dataset with finite number of observations with outputs in $\{-1,1\}$. We have followed the following experimental protocol: 
(1) remove all outliers, i.e., sample points $x_n$ whose norms is greater than $5$ times the average norm. 
(2) divide the dataset in two equal parts, one for training, one for testing,
(3) start the algorithms from $\theta_0=0$,
(4) sample within the training dataset with replacement, for $100$ times the number of observations in the training set; a dashed line marks the first effective pass in all plots,
(5) compute averaged cost on training and testing data  based on $10$ replications.
All cost are shown in log-scale, normalized to that the first iteration leads to $\psi(\theta_0)-\psi(\theta_*)=1$.

We solved a $\ell_1$-regularized least-squares regression for three different values of $\ell_1$-regularization: (1) one with the $\lambda_*$  which corresponds to the best generalization error after $500$ effective passes through the train set, (2) one with $\lambda_*/8$ and (3) one with $256\lambda_*$.

We compare five algorithms: averaged SGD with constant step-size, average SGD with decreasing step-size $C/(R^2\sqrt{n})$,  SDA with constant step-size, SDA with decreasing step-size $C/(R^2\sqrt{n})$ and SAGA with constant step-size  \citep{DefLacBac}, which showed state-of-the-art performance in the set-up of finite data sets. We consider the theoretical value of step-size which ensures convergence. We note the behaviors are comparable to the situation where step-sizes with the best testing error after one effective pass through the data (testing powers of $4$ times the theoretical step-size) are used.

We can make the following observations:
\begin{itemize}
\item
 We show results for $\lambda=\lambda_*$ in \myfig{sido}. SAGA, constant-step-size SDA and constant-step-size SGD exhibit the best behavior for both settings of step-size. However the training error of  SGD does not converge to $0$. On the other hand, SGD and SDA with step-size decaying as $C/R^2\sqrt{n}$ are slower.  SAGA and constant-step-size SDA exhibit some overfitting after more than $10$ passes on the regularized objective $\psi$.
 \vspace*{-.2cm}
\item
 We show results for $\lambda=\lambda_*/8$ in \myfig{sidosmall}. The problem is then very little regularized and the behavior of  constant-step-size SGD gets closer to constant-step-size SDA. There is here still overfitting for the regularized objective $\psi$. 
\item
\vspace*{-.2cm}
 We show results for $\lambda=256\lambda_*$ in \myfig{sido}. The problem is then much more regularized. In this case the regularization has an important weight and the stochasticity of the quadratic objective plays a minor role. Therefore SAGA exhibits the best behavior, despite strong early oscillations, with a linear convergence but reaches a saturation point after few passes over the data. On the other hand, constant-step-size SDA exhibits a sublinear convergence which is faster at the beginning and catches up with SAGA at the end. Constant-step-size SGD is not converging to the solution.   
\end{itemize}
 To conclude, constant-step-size SDA behaves similarly to SAGA which is specially dedicated to the set-up of finite data sets. For larger datasets, where only a single pass is possible, SAGA could not be run. Moreover SAGA does not come with generalization guarantees while SDA does (if a single pass is made).  
\begin{figure}[!h]
\centering
\begin{minipage}[c]{.40\linewidth}
\includegraphics[width=\linewidth]{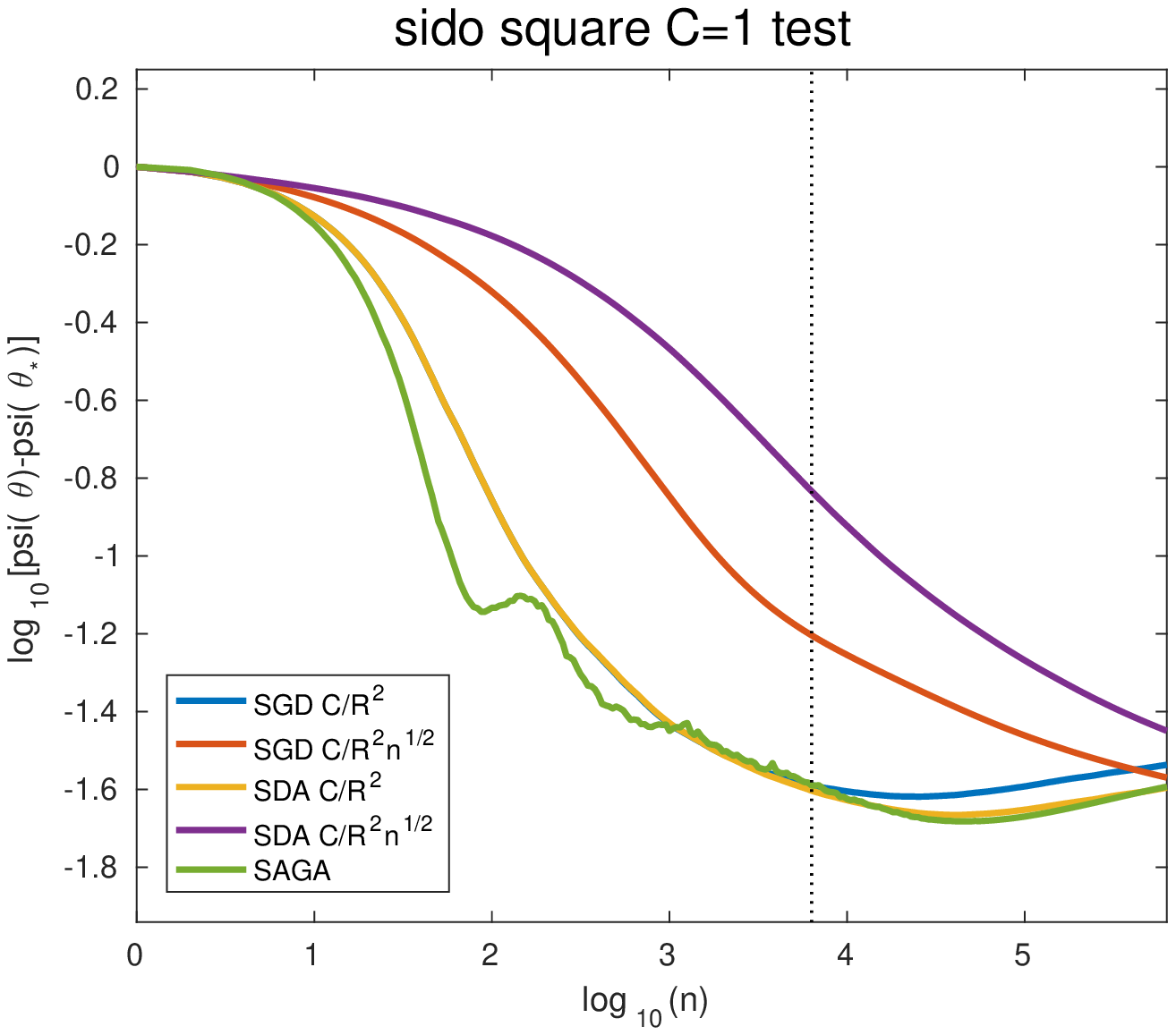}
   \end{minipage} \hspace*{.08\linewidth}
   \begin{minipage}[c]{.40\linewidth}
\includegraphics[width=\linewidth]{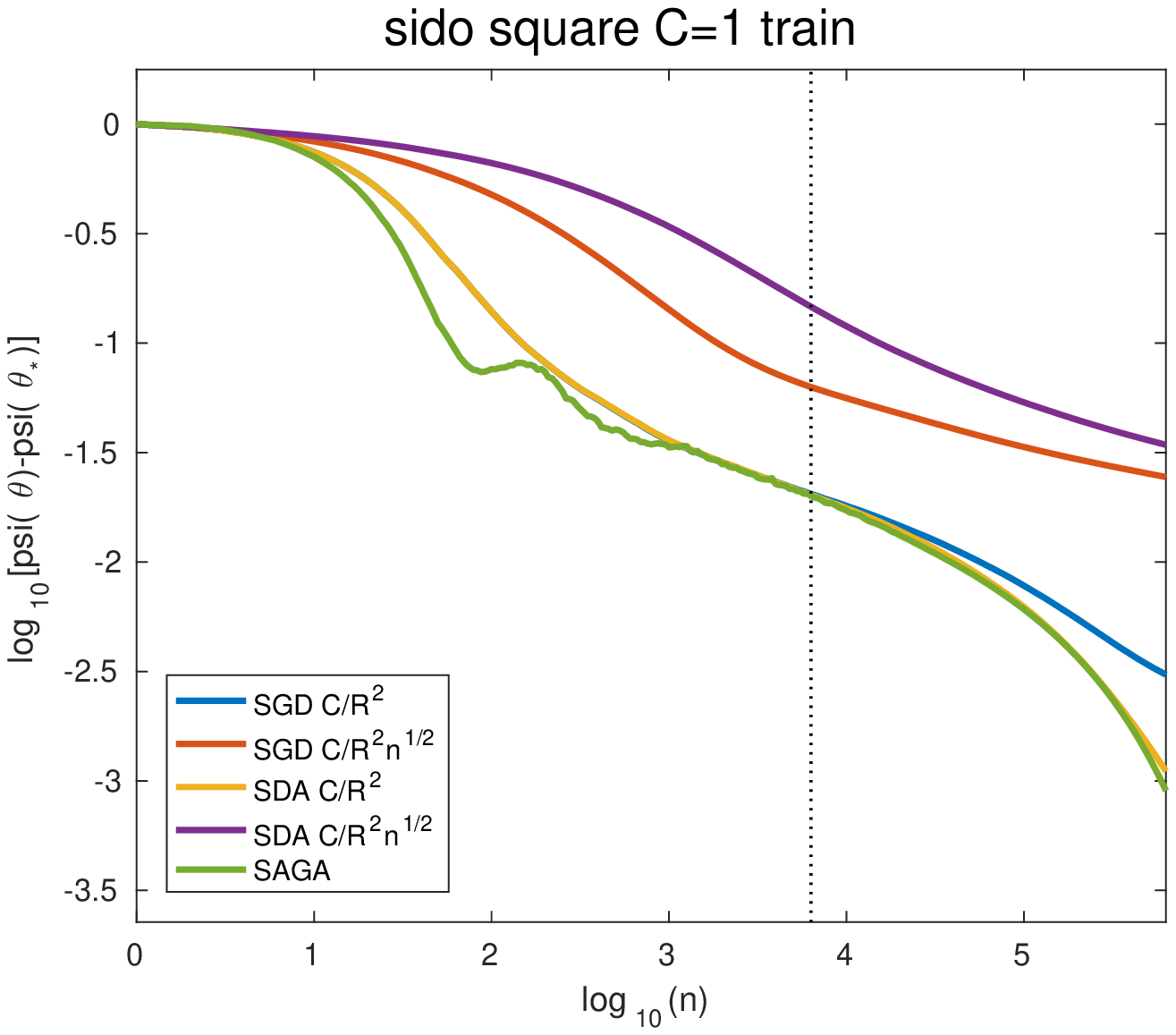}
   \end{minipage} 
  \caption{Test and train performances for $\ell_1$-regularized least-squares regression on the \emph{sido} dataset with $\lambda=\lambda_{\text{opt}}$. Left: test performance. Right: train performance.}
     \label{fig:sido}
     \begin{minipage}[c]{.40\linewidth}
\includegraphics[width=\linewidth]{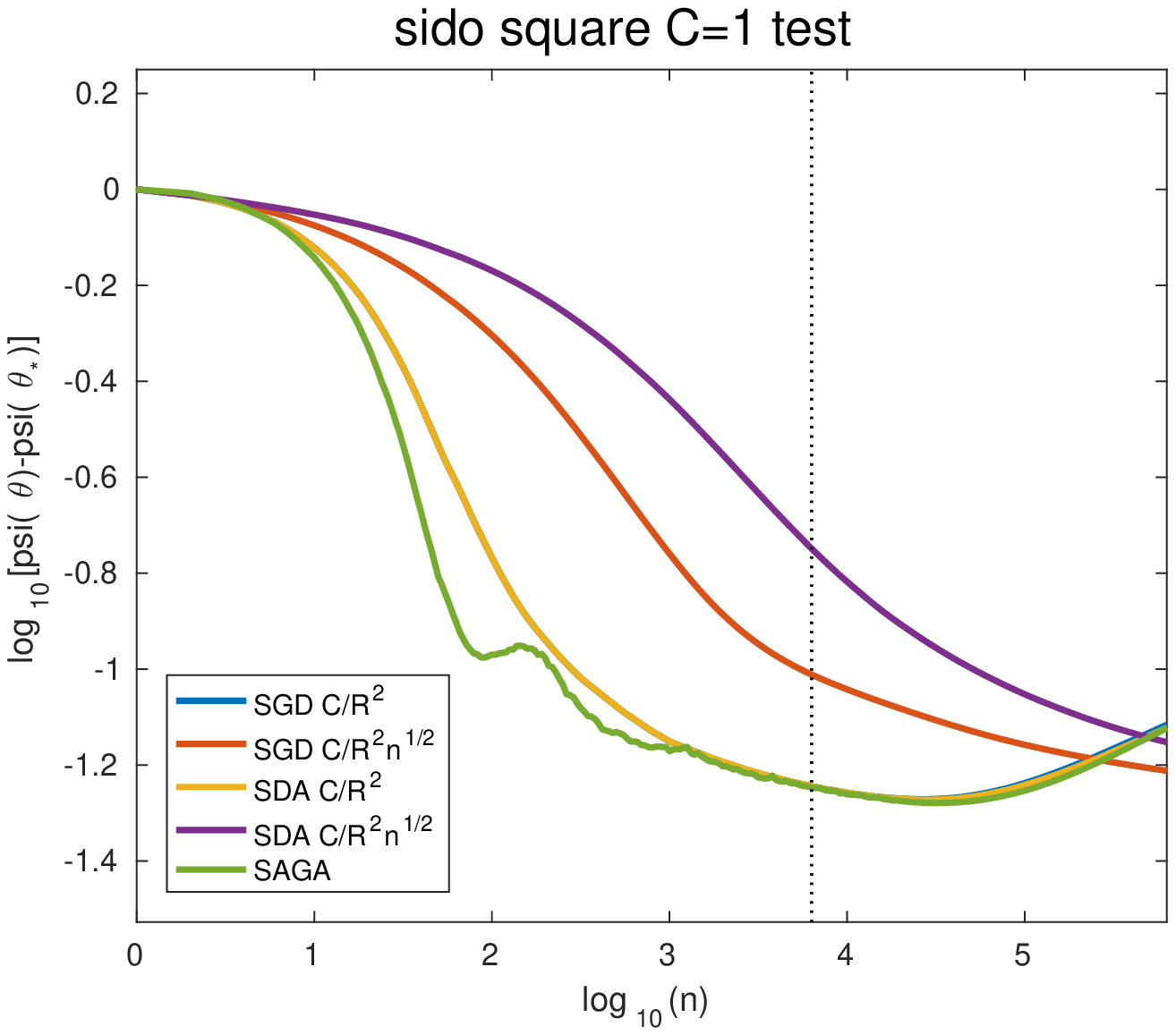}
   \end{minipage} \hspace*{.08\linewidth}
   \begin{minipage}[c]{.40\linewidth}
\includegraphics[width=\linewidth]{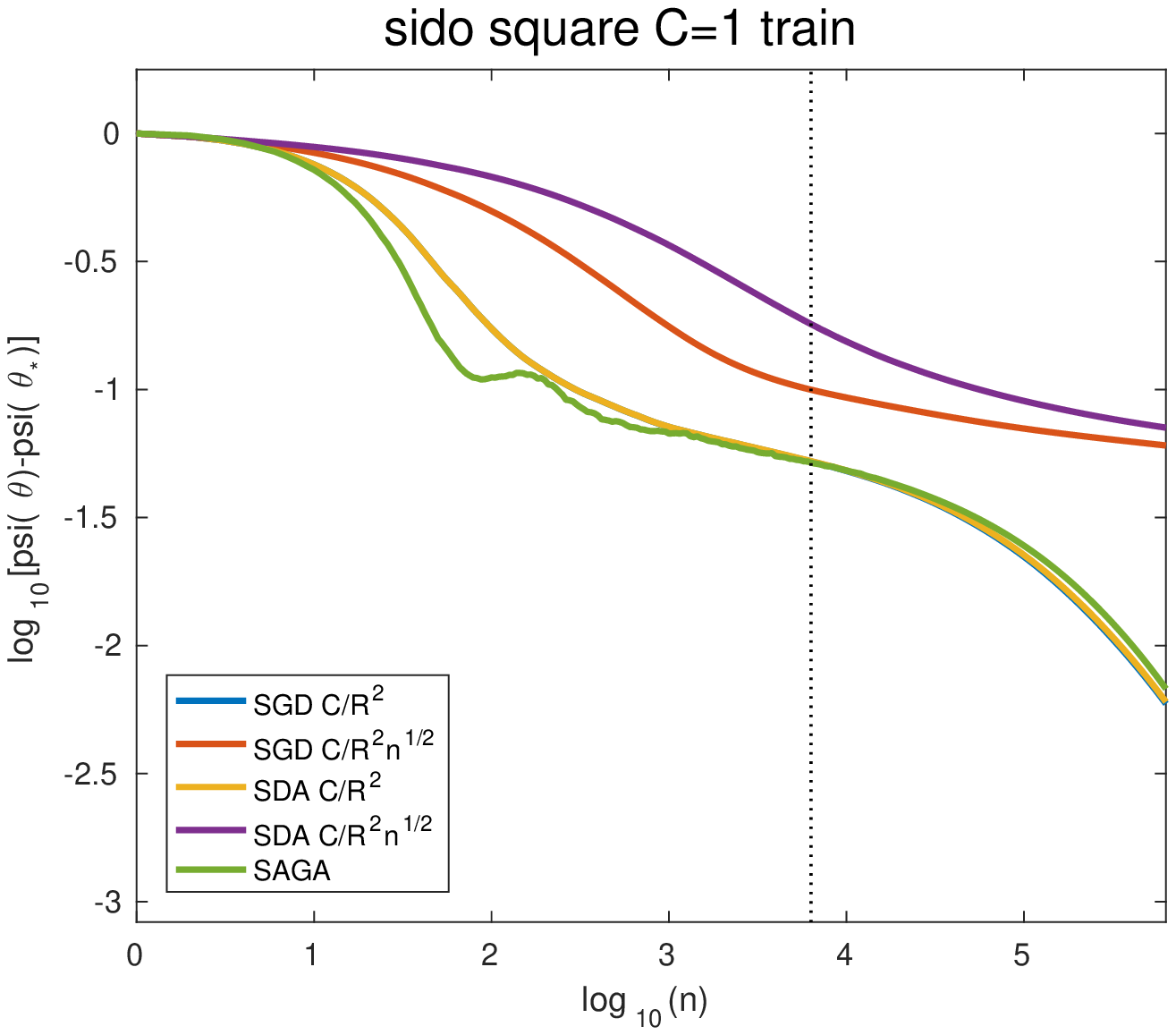}
   \end{minipage} 
 \caption{Test and train performances for $\ell_1$-regularized least-squares regression on the \emph{sido} dataset with $\lambda=\frac{\lambda_{\text{opt}}}{8}$. Left: test performance. Right: train performance.}
       \label{fig:sidosmall}
       
       \begin{minipage}[c]{.40\linewidth}
\includegraphics[width=\linewidth]{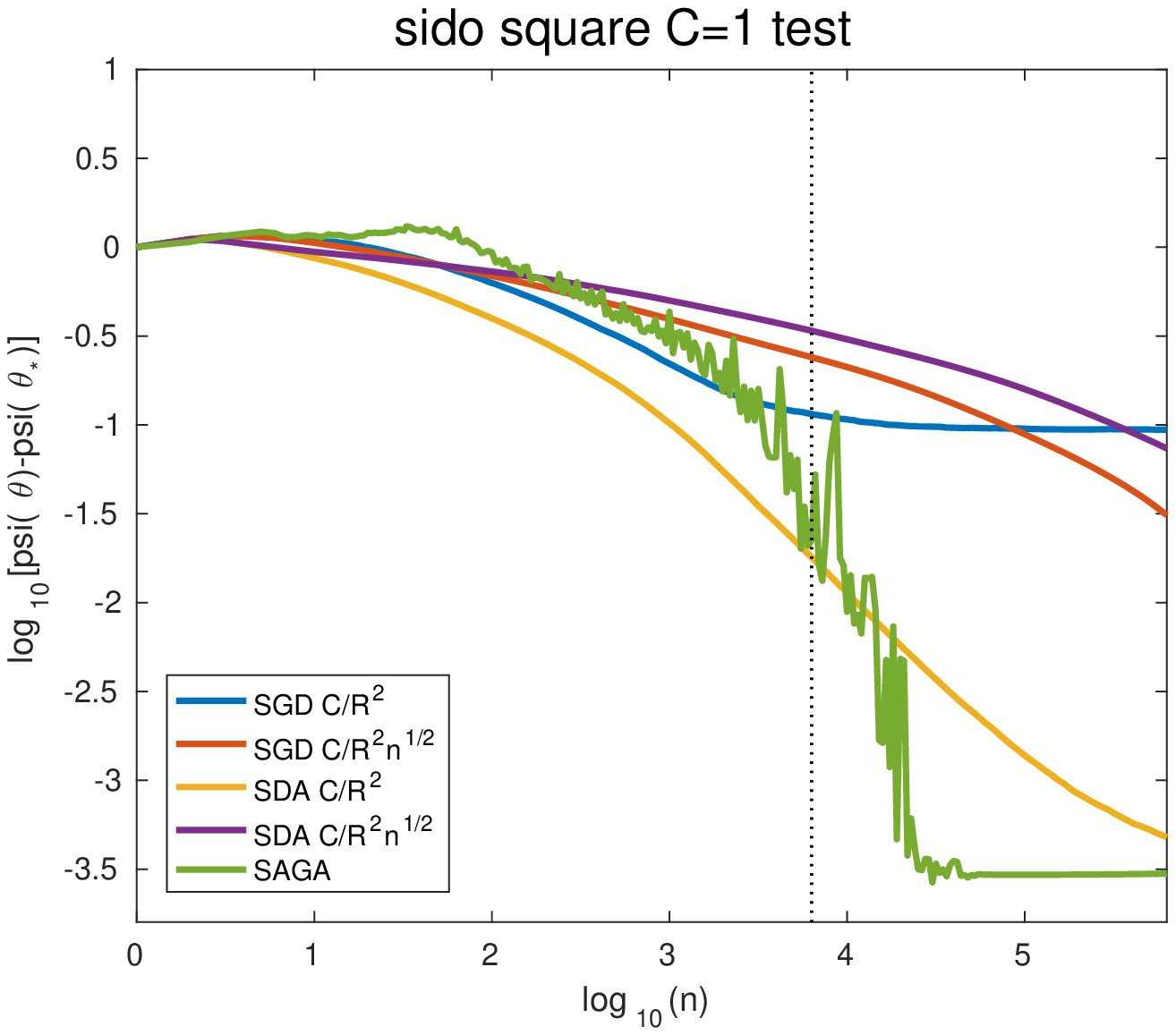}
   \end{minipage} \hspace*{.08\linewidth}
   \begin{minipage}[c]{.40\linewidth}
\includegraphics[width=\linewidth]{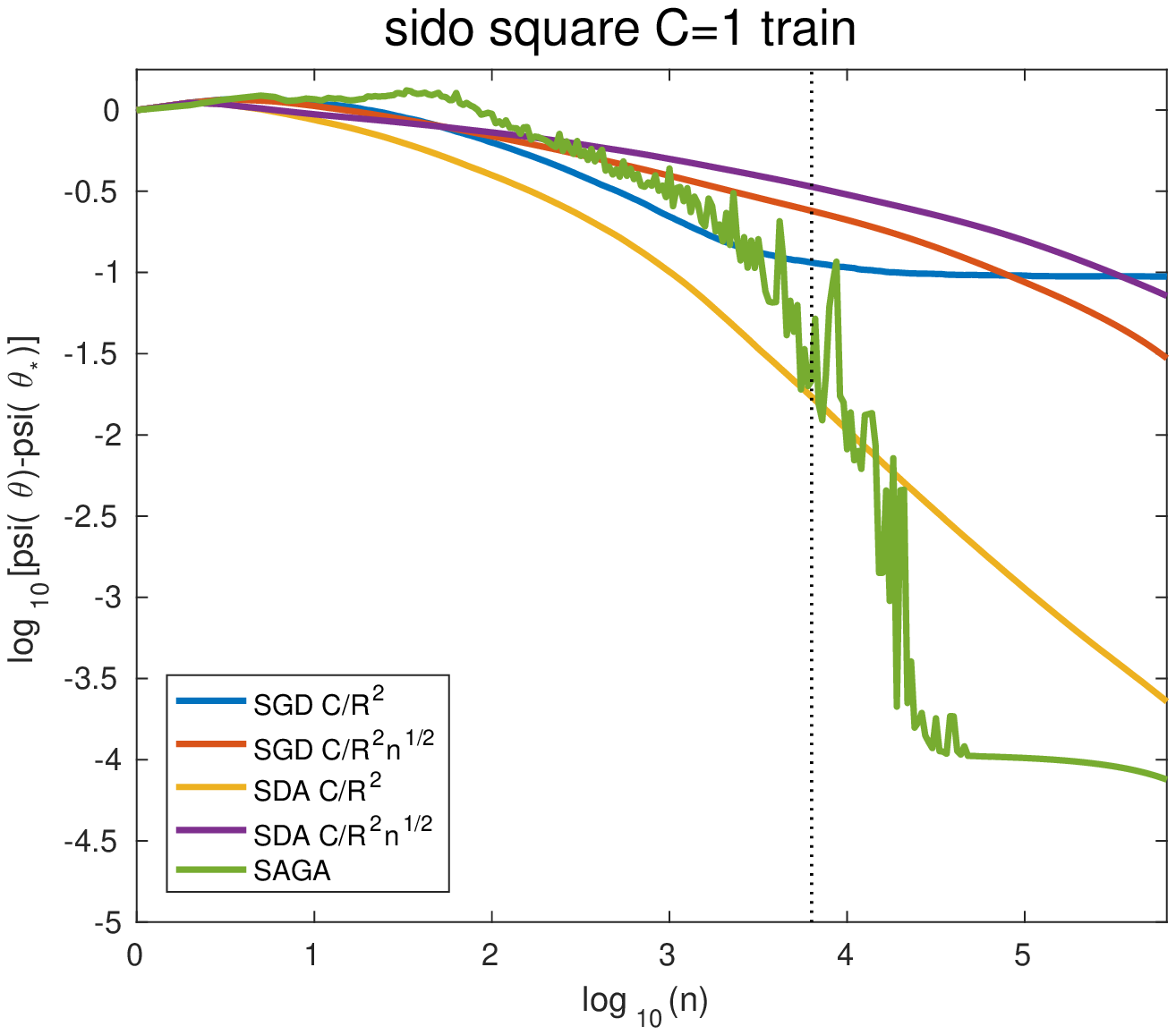}
   \end{minipage} 
 \caption{Test and train performances for $\ell_1$-regularized least-squares regression on the \emph{sido} dataset with $\lambda=256\lambda_{\text{opt}}$. 
 Left: test performance. Right: train performance.}
  \label{fig:sidobig}
\end{figure}
\end{document}